\pdfoutput=1

\documentclass[10pt]{amsart}


\usepackage{amsmath}

\usepackage{amssymb}
\usepackage{graphicx}

\usepackage{color}
\usepackage[dvipsnames]{xcolor}

\newcommand{\alertm}[1]{%
  \marginpar{%
    \ifodd\value{page} \raggedright \else \raggedleft \fi
    \footnotesize{\textcolor{Green}{#1}}
  }
}


\newtheorem{thm}{Theorem}[section]
\newtheorem{prop}[thm]{Proposition}
\newtheorem{lemma}[thm]{Lemma}
\newtheorem{cor}[thm]{Corollary}

\theoremstyle{definition}
\newtheorem{defn}{Definition}
\newtheorem{ex}[thm]{Example}

\theoremstyle{remark}
\newtheorem{remark}[thm]{Remark}

\numberwithin{equation}{section}
\newcommand{\mathsym}[1]{{}}
\newcommand{\unicode}[1]{{}}
\DeclareMathOperator{\sign}{sign}

\newenvironment{psmallmatrix}
  {\left(\begin{smallmatrix}}
  {\end{smallmatrix}\right)}


\def\C{\mathbb{C}}
\def\R{\mathbb{R}}
\def\CP{\mathbb{CP}}
\def\Z{\mathbb{Z}}

\def\H{\mathbb{H}}

\def\S{\mathcal{S}}

\def\G{\mathfrak{G}}

\def\t{\mathfrak{t}}
\def\k{\mathfrak{k}}

\def\SU{\mathrm{SU}}

\def\GL{\mathrm{GL}}

\def\CP{\mathbb{C}P}
\def\A{\mathfrak{A}}
\def\Q{\mathbb{Q}}
\def\D{\mathcal{D}}

\def\E{\mathcal{E}}
\def\F{\mathcal{F}}

\def\T{\mathrm{T}^2}
\def\K{\mathcal{K}}

\def\B{\mathfrak{B}}

\def\S{\mathcal{S}}

\def\PP2{\mathcal{P}^2}
\def\G{\mathrm{G}}
\def\G1{\mathrm{G}^+_1}
\def\NR1{\mathrm{N}^+_1}

\begin{document}
\title[]{On the Cauchy--Riemann geometry of \\transversal curves in the 3-sphere}

\author{Emilio Musso}
\address{(E. Musso) Dipartimento di Matematica, Politecnico di Torino,
Corso Duca degli Abruzzi 24, I-10129 Torino, Italy}
\email{emilio.musso@polito.it}

\author{Lorenzo Nicolodi}
\address{(L. Nicolodi) Dipartimento di Scienze Matematiche, Fisiche e Informatiche, Universit\`a di Parma,
 Parco Area delle Scienze 53/A, I-43124 Parma, Italy}
\email{lorenzo.nicolodi@unipr.it}

\author{Filippo Salis}
\address{(F. Salis) Istituto Nazionale di Alta Matematica - Dipartimento di Matematica, Politecnico di Torino,
Corso Duca degli Abruzzi 24, I-10129 Torino, Italy}
\email{filippo.salis@gmail.com}

\thanks{Authors partially supported by
PRIN 2017 ``Real and Complex Manifolds: Topology, Geometry and holomorphic dynamics''
(protocollo 2017JZ2SW5-004);
by the GNSAGA of INdAM; and by the FFABR Grant 2017 of MIUR. The third author was a research fellow of INdAM.
The present research was also partially
supported by MIUR grant ``Dipartimenti di Eccellenza'' 2018-2022, CUP: E11G18000350001, DISMA, Politecnico
di Torino}

\subjclass[2010]{53C50; 53C42; 53A10}


\keywords{}

\begin{abstract}
Let $\mathrm S^3$ be the unit sphere of $\mathbb C^2$ with its standard Cauchy--Riemann (CR)
structure.
This paper investigates the CR geometry
of curves in $\mathrm S^3$
which are transversal to the contact distribution, using the local CR invariants of $\mathrm S^3$.
More specifically, the
focus is on the CR geometry of transversal knots.
Four global invariants of transversal knots are considered: the phase anomaly,
the CR spin, the Maslov
index, and the CR
self-linking number. The interplay between these invariants and the Bennequin number
of a knot are discussed.
Next, the simplest
CR invariant variational problem for generic transversal curves
is considered and its closed critical curves are studied.
\end{abstract}

\maketitle

\section{Introduction}\label{0}

Legendrian and transversal knots of {orientable 3-dimensional contact manifolds},\footnote{That is, knots (embedded $\mathrm S^1$)
that are always tangent, respectively, transverse to the contact plane distribution.}
have become very popular
in contact topology since the seminal work of Bennequin \cite{Benn1983} and nowadays constitute an active and
independent field of research. We refer to \cite{Et2} for a comprehensive useful survey of results.
The basic notion is that of
contact isotopy: two transversal knots $\K$
and $\hat\K$ in a contact 3-manifold are \emph{contact isotopic} if there exists an isotopy $\{\K_t\}_{t\in [0,1]}$
between $\K$ and $\hat\K$ all of whose intermediate curves are transversal. Contact isotopy is stronger than the
usual notion of \emph{topological isotopy of knots}. Thus, it is natural to search for global invariants
with the aim of distinguishing the contact isotopy classes of a transversal knot within a topological class.
The basic invariant is the \emph{Bennequin number}, i.e., the self-linking number of a transversal knot with
its push-off in the direction of a nowhere vanishing cross-section of the contact distribution. One of the
main results in Bennequin's paper is the determination of an upper bound for the Bennequin number
in terms of the Euler
characteristic of a Seifert surface of the knot. In 1993, Eliashberg \cite{Eliash1993} proved that two
topologically trivial
transversal knots in $\R^3$ (and, more generally, in any tight contact 3-manifold) with the same Bennequin numbers
are contact isotopic. In 1997, Fuchs and Tabachnikov \cite{FuTa1997} conjectured that topologically isotopic
transversal knots
in $\R^3$ with equal Bennequin numbers are contact isotopic. An analogue conjecture for Legendrian knots was
disproved by Chekanov \cite{Chek2002} in 2002. In 1999, Etnyre \cite{Etn1999} proved that two transversal
positive torus knots in $\R^3$ with
the same Bennequin number are contact isotopic. In addition, Etnyre proved that the Bennequin number of a
positive torus knot of type $(p,q)$ is bounded above by $pq-  p - q$.

\vskip0.1cm

In dimension three, every compact oriented contact manifold
admits an adapted {\em Cauchy--Riemann (CR) structure}, namely
a complex structure $J$ on the contact distribution,
such that the exterior derivative of the contact 1-form restricted to the contact distribution
is a positive $(1,1)$-form \cite{ChHa1984,Martinet1971}.\footnote{Such a CR structure is also
called {\em strictly pseudoconvex} (cf. Section \ref{S1.1}).}
Well known examples of 3-manifolds with a CR structure include the unit sphere $\mathrm S^3$ in $\mathbb C^2$
and, more generally,
the strictly pseudoconvex real hypersurfaces of $\mathbb C^2$ investigated
by E. E. Levi at the beginning of the 19th century \cite{Levi1910,Levi1911}.
According to a celebrated example of H. Lewy \cite{Lewy1957}, not all CR three-manifolds arise as
strictly pseudoconvex real hypersurfaces.

One important feature of CR geometry is the existence of
\emph{local differential invariants}.
This fact, conjectured by Poincar\'e in 1907 \cite{Poincare}, was proved by E. Cartan in 1932
for strictly pseudoconvex
real hypersurfaces in $\C^2$ \cite{Cartan1932, Cartan1932-2} and by
{Tanaka \cite{Ta1962}} for real hypersurfaces in $\C^n$, $n>3$.
This result was extended to
abstract CR manifolds of hypersurface type
by Chern and Moser
in a celebrated paper published in 1974 \cite{ChMo1974}.
The local geometry of an abstract CR manifold $M$ is encoded in a {canonical} principal fiber-bundle,
the {\em Chern--Moser bundle},
equipped with a preferred {connection}, the {\em Chern--Moser connection}.

E. Cartan also defined a distinguished family of transversal curves on a CR manifold
$M$, called \emph{chains}
(cf. Cartan \cite{Cartan1932, Cartan1932-2}, Fefferman \cite{Feff1976}, Kock \cite{Koch}, Burns--Diederich--Shnider
\cite{BuDieShn1977}, Jacobowitz \cite{Jacobo1985}).
Through every point of $M$ there exists a unique chain with assigned transversal direction.
An intriguing description of chains can be given in terms of the \emph{Fefferman fibration}
(cf. Fefferman \cite{Feff1976}, Burns--Diederich--Shnider \cite{BuDieShn1977}, Farrris \cite{Farris1986},
Lee \cite{Lee1986}).
For a 3-dimensional (strictly pseudoconvex) CR manifold $M$, the Fefferman fibration is a trivial
circle bundle $N\to M$ over $M$
equipped
with a canonical conformal Lorentz structure. The fibers are null geodesics that originate
a \emph{shear-free congruence of rays} (cf. Robinson--Trautman \cite{RobinTraut1985, RobinTraut1985Pr} and
Musso \cite{Musso1992}).
Therefore, the chains of $M$ are the projections of the null geodesics of $N$ intersecting transversally the fibers.
For an updated and complete account on the geometry of the Fefferman fibration, the reader is referred to the
recent survey by Barletta and Dragomir \cite{BarDrag}.

\vskip0.1cm
The purpose of this paper is to investigate the {CR geometry of transversal curves}
in $\mathrm S^3$, where $\mathrm S^3$ is thought of with its natural CR structure induced
from $\mathbb C^2$.
More specifically, we will focus on the {CR geometry of transversal knots.
In this case, the total space of the Chern--Moser bundle is a Lie group ${G}$ isomorphic to $\SU(2,1)$ and the
Chern--Moser connection is given by the Maurer--Cartan form of ${G}$.

As in the study of knots in $\R^3$
(cf. Banchoff \cite{Ba}, C{\u{a}}lug{\u{a}}reanu \cite{Ca}, DeTurck--Gluck \cite{DG},
Fuller \cite{Fu}, Gluck--Pan \cite{GluPan1998}, Pohl \cite{Po}, White \cite{White}),
a differential-geometric approach to the study of transversal knots in $\mathrm S^3$
requires some genericity conditions.
This can be formulated as follows: given
a transversal knot $\K\subset\S$ and a point $p \in\K$, there exists a unique chain tangent to $\K$ at $p$,
called the \emph{osculating chain}. If the order of contact of the knot with its osculating chain at $p$ is
strictly larger that one, we  say that $p$ is a CR \emph{inflection point}.
A transversal knot (curve) is \emph{generic} if it has no
CR inflection points. In the framework of CR geometry, the appropriate notion of isotopy
can be defined as follows: two generic transversal knots $\K$ and $\hat\K$ are said to be
CR isotopic if there exists a contact isotopy $\{\K_t\}_{t\in [0,1]}$ between $\K$ and $\hat\K$ all of
whose intermediate curves are generic. This definition is reminiscent of the notion of curvature-isotopy
for knots in Euclidean 3-space. In addition, two knots $\K$ and $\hat\K$ are said to be \emph{CR congruent}
to each other if there exists a CR automorphism $\Phi$ of the 3-sphere, such that $\Phi(\hat\K) = \K$.
Since the group of CR automorphisms of the 3-sphere is connected, congruent transversal knots are CR isotopic.

\subsection{Description of results and organization of the paper}
Using the method of moving frames,
to any pa\-rame\-tri\-zed generic transversal curve $\gamma$ in $\mathrm S^3$
we associate a canonical lift
to the Lie group ${G}$, called {the \emph{Wilczynski frame field} along}
$\gamma$. This fact has two  main consequences.
The first is the existence of a canonical line element
(the \emph{infinitesimal strain}) which, in turn, can be used to distinguish preferred parametrizations
(\emph{natural parametrizations}) inducing a special affine structure on the curve.
The second consequence is the
existence of two basic local CR differential invariants for $\gamma$, namely the \emph{bending} $\kappa$
and the \emph{twist} $\tau$.
The natural parametrization, the bending, and the twist completely characterize the CR congruence
class of a generic transversal curve.
The existence of canonical frames
is also used to
define various \emph{global invariants}, such as the \emph{phase anomaly}, the \emph{CR spin},
the \emph{Maslov index}, and the \emph{CR self-linking number}.

Among all generic transversal knots,
those with constant bending and
twist are called \emph{isoparametric knots}. There are two families of isoparametric knots.
The CR congruence classes of the members of each family depend on a rational parameter
$r \in(-2,-1/2)$, the \emph{spectral ratio}, and a real (continuous) parameter $\rho\in(0,\sqrt2)$,
the \emph{Clifford parameter}. The isoparametric knots of the first family are negative torus knots
of type $(p,q)$, where $\frac{p}{q} = \frac{2 + r}{1 + 2r}$. The isoparametric knots of the second family
are positive torus knots of type $(p,q)$, where $\frac{p}{q} = \frac{2 + r}{1 -r}$.
For both families, we compute the phase anomaly, the CR spin, and the Maslov index.
Some numerical experiments are also performed to
compute the Bennequin number and the CR self-linking number of isoparametric knots.
These computations rely
on the numerical evaluation of the appropriate Gaussian linking integrals
and give
{convincing experimental support for supposing}
that the Bennequin number of an isoparametric knot is $pq- (p + q)$ and that the
self-linking number is equal to $pq$. This experimental evidence shows that the isoparametric
knots of the second family have maximal Bennequin number and is reminiscent of a result of
Banchoff \cite{Ba} on the self-linking number of {a knot lying in a flat torus of the 3-sphere}.

In the last  part of the paper, we study the \emph{total strain functional}, defined by the integral
of the infinitesimal strain. We prove that  the closed critical curves are isoparametric knots
of the second family with spectral ratio
$r = \frac{p-2 q}{p+q}$ and Clifford parameter
{\small\begin{equation}\label{cliff-par}
   \rho(r)=\sqrt{\frac{6 + 6 r + 4 \sqrt{3 (1 + r + r^2)} -\sqrt[4]{5 + 8 r + 5 r^2 + 3(1+r)\sqrt{ 3( 1 + r + r^2)}}}{1-r}}.
    \end{equation}}

\vskip0.1cm

More specifically, the material and the results are organized as follows.
\vskip0.1cm

{Section \ref{s:1}} will collect the necessary preliminary material and will set up the basic notation.
We recall the notion of a 3-dimensional
CR manifold and describe the standard CR structure of the 3-sphere, viewed as the strictly pseudoconvex
real hypersurface $\S\subset\CP^2$ of all isotropic complex lines of $\C^{2,1}$, that is, $\C^3$
with a Hermitian scalar product of signature $(2,1)$. We write the Maurer--Cartan equation of the CR
transformation group ${G}$. We then describe the Chern--Moser fibration ${G}\to\S$,
the Heisenberg chart and the Heisenberg projection $\S\setminus\{P_\infty\}\to \R^3$ of the 3-sphere minus
the point at infinity onto Euclidean 3-space. The latter is the analogue of the stereographic projection
in M\"obius geometry.
Next, we consider the Fefferman fibration of the 3-sphere. The total space, equipped with its canonical Lorentz
conformal structure, can be identified with a compact form of the 4-dimensional Einstein static universe.
We then describe the maximal abelian subgroups of ${G}$ and their orbits in the 3-sphere.
The regular orbits are 2-dimensional tori, referred to as Heisenberg cyclides. They are the analogues
of the regular cyclides of Dupin in classical M\"obius geometry. We conclude the first section
with some remarks about the chains of the 3-sphere.

\vskip0.1cm
{Section \ref{s:2}} is devoted to the CR geometry of transversal curves and knots in $\mathrm S^3$.
We define the
notion of CR inflection point and give a characterization of a CR inflection point in terms of
the analytic contact with the osculating
chain (cf. Proposition \ref{2.2.1}).
For a generic transversal curve, that is, a transversal curve with no CR inflection points,
we define the infinitesimal strain and
the natural parametrization, and introduce the two main local CR invariants, namely
the bending and the twist. We then construct the Wilczynski frame
field along a generic
transversal
curve (cf. Proposition \ref{2.2.3}) and prove the existence and uniqueness theorem for
generic transversal
curves with assigned bending and twist (cf. Theorem \ref{2.2.4}).
A geometric interpretation of the
bending and
the twist in terms of the Fefferman lift and of the dual curve is given.
We then define the CR invariant moving trihedron
and the CR normal vector field along a generic transversal curve.
The existence of canonical moving frames is then reformulated in the language of exterior differential systems.
We build the
configuration space $Y =  {G}\times\R^2$ and a Pfaffian differential system $(\mathbb{J}, \eta)$ on $Y$,
with independence condition $\eta \in\Omega^1(Y)$, whose integral curves account for
the prolongations of generic transversal curves, i.e.,
the Wilczynski frames,
the bending, and the twist of generic transversal curves. For later use, we also consider the phase
space of the Pfaffian system $(\mathbb{J},\eta)$, its Cartan--Poincar\'e form, and the associated Cartan system.
In the last part of Section \ref{s:2}, we define the main global invariants of a generic closed transversal curve:
the phase anomaly, the CR spin, the Maslov index, and the CR self-linking number of a generic transversal knot.
The latter is the linking number of the knot with its push-off in the direction of the CR normal vector field.

\vskip0.1cm

Section \ref{s:3} deals with the class of isoparametric curves, that is, generic transversal curves with constant
bending and twist.
To any isoparametric curve we associate a self-adjoint endomorphism $\mathrm H$ of $\C^{2,1}$, called the Hamiltonian
of the curve,
and prove that an isoparametric curve can be closed if and only if $\mathrm H$ has three real eigenvalues
$e_1 <e_2< e_3$, such that  $e_1+e_2+e_3= 0$ (cf. Proposition \ref{3.2.1}). We show that such an
isoparametric curve is closed (isoparametric string) if and only if $r:=e_1/e_3$ (the spectral ratio)
 is a
rational number such that $-2 < r< -1/2$
(cf. Proposition \ref{3.2.2}).
We then divide isoparametric strings into two classes, depending on whether $3\kappa < |\tau |^{1/2}$
(the first class), or
$3\kappa > |\tau |^{1/2}$ (the second class). Proposition \ref{3.2.3} proves a technical result
on the causal characters of the eigenspaces of the Hamilltonian of an isoparametric string.
Next, we construct explicit examples of isoparametric strings of the first class,
referred to as the {\em symmetrical configurations of the first kind},
which depend on two parameters $(r,\rho)$, a rational parameter
$r \in (-2,-1/2)$,
the spectral parameter,
and a real parameter $\rho\in (0,\sqrt2)$, the Clifford parameter.
Proposition \ref{3.3.2} proves that the trajectory $\K_{r,\rho}$ of a
symmetrical configuration of the first kind
is a negative torus knot of type $(p,q)$, where the integers $p>0$ and $q<0$ are, respectively, 
the numerator and the
denominator of $\frac{2 + r}{1 + 2r}$.
In Propositions \ref{3.3.3} and \ref{3.3.4}, we compute the phase anomaly, the spin and the Maslov
index of $\K_{r,\rho}$. In Proposition \ref{3.3.5}, we show that any isoparametric string of the first class
is indeed congruent to a symmetrical configuration of the first kind.
Next, we construct explicit examples
of isoparametric strings of the second class, referred to as the {\em symmetrical configurations of the
second kind}. They also depend on a rational parameter
$r \in (-2,-1/2)$ and a real parameter $\rho\in (0,\sqrt2)$.
Proposition \ref{3.4.2} proves that the trajectory $\E_{r,\rho}$ of a symmetrical
configuration of the second kind
is a positive torus
knot of type $(p,q)$, where the positive integers $p$ and $q$ are, respectively, the numerator
and the denominator of $\frac{2 + r}{1 -r}$.
In Propositions \ref{3.4.3} and \ref{3.4.4} we compute the phase anomaly, the CR spin, and the Maslov
index of $\E_{r,\rho}$. Finally, Proposition \ref{3.4.5} shows that any isoparametric string of the
second class is congruent to a symmetrical configuration of the second kind.

\vskip0.1cm

{Section \ref{s:4}} is devoted to the study of the total strain functional
on the space of closed generic transversal curves. This is the functional defined
by the integral of the infinitesimal strain.
Following
Griffiths' approach to the
calculus of variation (cf. \cite{GM, Gr}),
we use
the phase space of the Pfaffian system $(\mathbb{J},\eta)$,
and the related formalism,
to prove that a critical curve of the total strain functional
is congruent to a symmetrical configuration of the second kind $\eta_{r,\rho}$, with positive knot type
$(p,q)$, where $\frac{p}{q}\in(0,1)$,
$r = \frac{p-2 q}{p+q}$, and
$\rho(r)$ as in \eqref{cliff-par}.
For the study of related variational problems, the reader is referred to \cite{DMN,MN2,MN,MS}.


\section{Preliminaries}\label{s:1}

\subsection{Cauchy--Riemann three-manifolds}\label{S1.1}

\begin{defn}
A {Cauchy--Riemann (CR) 3-manifold} is an orientable smooth 3-dimensional manifold $M$, equipped with a
contact form $\zeta$ and a complex structure $J$ on the contact distribution $\mathcal{D}$ of $\zeta$,
such that
\begin{equation}\label{str-ps-convex}
  d\zeta(X, JX) > 0,
   \end{equation}
for all nonzero $X\in\mathcal{D}$ (i.e., $d\zeta$ restricted to $\mathcal D$ is a
positive $(1,1)$ form).\footnote{If \eqref{str-ps-convex} is satisfied, the CR manifold $M$ is said to be {\em strictly pseudoconvex} (cf. \cite{Koch,Lee1986}).}
The latter condition is still fulfilled if $\zeta$ is replaced with
$\tilde\zeta=r\zeta$, where $r$
is any strictly positive smooth function on $M$.
\end{defn}

\begin{ex}
Standard examples of CR 3-manifolds are provided by \emph{strictly pseudoconvex}
(or \emph{Levi definite})
real hypersurfaces of a complex surface. If $M$ is a smooth real hypersurface of a complex
surface $N$, then
locally $M$ is the zero locus of a real-valued function $F(x_1,y_1,x_2,y_2)$,
where $z_1= x_1+iy_1$ and $z_2= x_2+iy_2$
are holomorphic coordinates on $ N$.
Let $\zeta$ be the restriction to $T(M)$ of the 1-form
$$
   \sum_{j=1}^2 \partial_{y_j} F dx^j- \sum_{j=1}^2 \partial_{x_j} F dy^j.
        $$
If $M$ is strictly pseudoconvex, then $\zeta$ is a contact form. If $\mathcal{D}\subset T(M)$
is the contact distribution, then $\mathcal{D}|_ p$ is a 1-dimensional complex subspace of $T_p(N)$,
for every $ p\in M$. Consequently,  $\mathcal{D}$ inherits a complex structure tamed
by $d\zeta|_{\mathcal{D}\times\mathcal{D}}$.
It is a classical result of Eugenio Elia Levi \cite{Levi1910,Levi1911} that the (smooth) boundary of a domain
of holomorphy is {pseudoconvex}.
\end{ex}

\begin{defn}
Let $M$ be a 3-dimensional CR manifold and $p\in M$. A basis $(v_1,v_2,v_3)$
of the tangent space $T_p(M)$ is said to be \emph{adapted} to $M$ if
$$
     v_2, \,\, v_3=J(v_2) \in\mathcal{D}
    \quad  \text{and} \quad
     d\zeta(v_2,v_3)=\zeta(v_1).
       $$
\end{defn}

\begin{defn}\label{K}
%
Let
$K$ be the closed subgroup of $\GL(3,\R)$
consisting of all $3\times3$
matrices of the form
\begin{equation}
Y(\rho,\phi,p,q)=
\begin{pmatrix}
\frac{1}{\rho^2} & 0 &0\\
\frac{p}{\rho}&\frac{\cos 3\phi}{\rho}&\frac{\sin 3\phi}{\rho}\\
\frac{q}{\rho}&-\frac{\sin 3\phi}{\rho}&\frac{\cos 3\phi}{\rho}\\
\end{pmatrix},
\end{equation}
   where $\rho>0$ and $\phi$, $p$, $q\in\R$.
     \end{defn}

It is not difficult to prove the following.

\begin{prop}
The totality $P(M)$ of all adapted basis of $M$ is a reduction of the linear frame bundle $L(M)$ of $M$
with structure group $K$. We call $P(M)$ the \emph{structure bundle} of the CR manifold $M$.
\end{prop}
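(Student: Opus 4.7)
The plan is to verify the four standard requirements for a principal-bundle reduction: (i) $K$ is a closed Lie subgroup of $\GL(3,\R)$; (ii) the fiber $P(M)_p$ of adapted bases over each $p \in M$ is nonempty; (iii) the right action of $\GL(3,\R)$ on $L(M)$, restricted to $K$, acts simply transitively on each fiber $P(M)_p$; and (iv) $P(M) \to M$ admits smooth local sections. Items (i)--(iii) yield a set-theoretic $K$-reduction, and (iv) promotes it to a smooth principal subbundle.

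Step (i) follows by inspection of the explicit form of $Y(\rho,\phi,p,q)$: the set is topologically closed in $\GL(3,\R)$, and a routine matrix multiplication shows closure under products and inverses, with composition law of the form $(\rho_1\rho_2,\phi_1+\phi_2,\cdot,\cdot)$ and $(p,q)$ entries transforming affinely. Step (ii) is equally direct: given $p \in M$, pick any nonzero $v_2 \in \mathcal{D}_p$, set $v_3 := Jv_2 \in \mathcal{D}_p$, observe that $d\zeta(v_2,v_3) > 0$ by strict pseudoconvexity \eqref{str-ps-convex}, pick $w \in T_pM$ with $\zeta(w) \neq 0$ (which exists since $\zeta_p$ is contact), and rescale to $v_1 := (d\zeta(v_2,v_3)/\zeta(w))\, w$; then $(v_1,v_2,v_3)$ is adapted.

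The substantive content is step (iii). Given two adapted frames $(v_i)$ and $(\tilde v_j)$ at the same point, expand $\tilde v_j = \sum_i v_i A^i_j$. The conditions $\zeta(\tilde v_2) = \zeta(\tilde v_3) = 0$, together with $\zeta(v_1) \neq 0$ and $\zeta(v_2) = \zeta(v_3) = 0$, immediately force $A^1_2 = A^1_3 = 0$. The identity $\tilde v_3 = J\tilde v_2$, combined with $Jv_3 = J^2 v_2 = -v_2$, forces the lower-right $2{\times}2$ block of $A$ to have the conformal-rotation form $\begin{psmallmatrix} \alpha & -\beta \\ \beta & \alpha \end{psmallmatrix}$, which one uniquely writes as $\rho^{-1}\begin{psmallmatrix}\cos 3\phi & \sin 3\phi \\ -\sin 3\phi & \cos 3\phi\end{psmallmatrix}$ for some $\rho > 0$ and $\phi \in \R/(2\pi/3)\Z$. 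Bilinearity of $d\zeta$ together with $d\zeta(v_2,v_3) = \zeta(v_1)$ then converts $d\zeta(\tilde v_2,\tilde v_3) = \zeta(\tilde v_1)$ into $A^1_1 = \alpha^2+\beta^2 = 1/\rho^2$, while $A^2_1$ and $A^3_1$ are left free and are reparametrized as $p/\rho$ and $q/\rho$. The change-of-basis matrix is then exactly $Y(\rho,\phi,p,q)$, giving simple transitivity.

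For step (iv), I reproduce the construction of (ii) smoothly on an open $U \subset M$: since $\mathcal{D}$ has rank $2$ and $M$ is orientable, $\mathcal{D}$ admits a local nonvanishing section $V_2$; set $V_3 := JV_2$, pick a smooth transverse vector field $W$ on $U$ (e.g., the Reeb field of $\zeta$), and rescale as in (ii) to obtain a smooth adapted frame field, i.e., a smooth local section of $P(M) \to M$; standard reduction arguments then upgrade $P(M)$ to a smooth principal $K$-subbundle. The main --- but still routine --- point to watch is the linear-algebra step (iii), where the interplay of the three defining conditions of an adapted frame is precisely what cuts the $\GL(3,\R)$-stabilizer of the fiber down to $K$; the factor of $3$ in the rotation angle is a cosmetic reparametrization chosen in view of the Chern--Moser normalizations appearing later, rather than a constraint imposed at this stage.
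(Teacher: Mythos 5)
Your proof is correct and complete; the paper itself offers no argument for this proposition (it is introduced with ``It is not difficult to prove the following''), and your four-step verification --- with the linear-algebra computation in step (iii) showing that the three defining conditions of an adapted frame cut the fiber stabilizer down exactly to $K$ --- is precisely the routine check the authors leave to the reader.
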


\begin{defn}
An \emph{automorphism} of a 3-dimensional CR manifold $M$ is a contactomorphism $\Phi : (M,\zeta) \to (M,\zeta)$,
such that the restriction to the contact distribution of its differential $\Phi_*$ commutes with $J$.
\end{defn}

{Contrary to what happens for contact structures, one interesting feature
of CR structures is the existence of local differential invariants.
This implies that the automorphism group of a 3-dimensional CR manifold $M$ is a Lie
transformation group of dimension less or equal than 8. The equality is attained when $M$
is the 3-dimensional sphere equipped with its standard CR structure described below (cf. \cite{Cartan1932,Cartan1932-2,ChMo1974,Poincare})}.

\subsection{The standard CR structure on the 3-sphere}\label{S1.2}

Let $\C^{2,1}$ denote $\C^3$ with the indefinite Hermitian scalar product of signature $(2,1)$ given by
\begin{equation}\label{hp}
\langle \mathbf{z},\mathbf{w} \rangle = {^t\!\overline{\mathbf{z}}}\, \mathbf{h}\, \mathbf{w},
\quad {\mathbf h}= (h_{ij})=
\begin{pmatrix}
0& 0& i\\
0& 1&0\\
-i&0&0
\end{pmatrix}.
   \end{equation}

Let $G$ be the special pseudounitary group of \eqref{hp}, i.e., the group of unimodular
complex $3 \times 3$ matrices preserving the indefinite Hermitian scalar product \eqref{hp},
\begin{equation}\label{def-Lie-group}
G = \{ A \in \mathrm{SL}(3,\mathbb C) \mid {^t\!\bar{A}}\mathbf h A = \mathbf h\}.
\end{equation}
Let $\mathfrak g$ be the Lie algebra of $G$,
\begin{equation}\label{def-Lie-algebra}
\mathfrak g = \{ X \in \mathfrak{sl}(3,\mathbb C) \mid {^t\!\bar{X}}\mathbf h + \mathbf h X = 0\}.
\end{equation}

The center of $G$ is $Z = \{\varepsilon I_3\mid \varepsilon \in \mathbb C, \,\varepsilon^3 =1\}\cong \mathbb Z_3$, where
$I_3$ denotes the $3\times 3$ identity matrix. Let $[G]$ denote the quotient Lie
group $G/Z$. Given any $A\in G$, its equivalence class in $[G]$ is denoted by $[A]$.
Thus $[A] = [B]$ if and only if $B = \epsilon A$, for some cube root
of unity $\varepsilon$.

\vskip0.1cm
For any $A\in G$, the column vectors $(A_1, A_2, A_3)$ of $A$ form a basis of $\C^{2,1}$
satisfying $\langle A_i, A_j \rangle = h_{ij}$ and $\mathrm{det}(A_1, A_2, A_3) =1$. Such a basis is
referred to as a
{\em light-cone basis}.
On the other hand, a basis $(U_1, U_2, U_3)$ of $\C^{2,1}$
such that $\mathrm{det}(U_1, U_2, U_3) =1$ and
$\langle U_i, U_j \rangle = \delta_{ij}\epsilon_j$, where
$\epsilon_1 = \epsilon_2 =1$, $\epsilon_3 = -1$,
is referred to as a {\em pseudo-unitary basis}.

\vskip0.1cm
We adhere to the terminology commonly used in relativity and say that a nonzero vector ${\bf z}\in\C^{2,1}$
is \emph{spacelike}, \emph{timelike} or \emph{lightlike}, depending on whether $\langle {\bf z}, {\bf z}\rangle$
is positive, negative or zero.
The cone of all lightlike vectors will be denoted by $\mathcal{N}$.

\vskip0.1cm

The most effective way of describing the standard CR structure of the 3-sphere $\mathrm S^3\subset \mathbb C^2$
and the group of
its CR automorphisms is
to realize the 3-sphere as the real hypersurface $\mathcal{S}$ of $\CP^2$,
defined by
\begin{equation}
\mathcal S =\left \{[{\bf z}] \in \CP^2 \mid \langle \mathbf{z},\mathbf{z} \rangle= i(\overline{z}_1z_3-\overline{z}_3z_1)+\overline{z}_2z_2=0\right\}
    \end{equation}
where ${\bf z}={^t\!(}z_1,z_2,z_3)$ are homogeneous coordinates in $\CP^2$.
The restriction of the affine chart
\begin{equation}
  s:\C^2 \ni (z_1,z_2) \mapsto
  \left[{^t\!\Big(}\frac{1+z_1}{2},i\frac{z_2}{\sqrt{2}},i\frac{1-z_1}{2}\Big)\right]\in \mathcal S\subset \CP^2
   \end{equation}
to the unit sphere ${\mathrm S}^3$ of $\C^2$ defines a smooth diffeomorphism
between
${\rm S}^3$ and $\mathcal{S}$.
For each $p=[{\bf z}]\in  \mathcal{S}$, the differential $(1,0)$-form
\begin{equation}\label{cf}
 \tilde\zeta\big|_p =
    - \frac{i \langle \bf{z},d{\bf z}\rangle}{\overline{{\bf z}} \, {^t\!\bf z}}  \Big|_p \in \Omega^{1,0}(\CP^2)\big|_p
     \end{equation}
is well defined. In addition, the null space of the imaginary part of $\tilde\zeta|_p$ is
${{T}(\mathcal{S})}|_p$, namely the tangent space of $\mathcal{S}$ at $p$.
Thus, the restriction of $\tilde\zeta$ to  ${{T}(\mathcal{S})}$ is a real-valued 1-form $\zeta\in\Omega^1(\mathcal{S})$.
Since the pullback of $\zeta$ by the diffeomorphism $s:{\rm S}^3\to\mathcal{S}$ is the standard contact
form $i\overline{{\bf z}}\cdot d{{\bf z}}|_{\mathrm S^3}$ of $\mathrm S^3$, then $\zeta$ is a contact
form whose contact distribution $\mathcal{D}$ is, by construction, a complex sub-bundle
of ${{T}(\CP^2)|_{\mathcal{S}}}$. Therefore, $\mathcal{D}$ inherits
from ${{T}(\CP^2)\big|_{\mathcal{S}}}$ a complex structure $J$.

Using the standard identification ${T}(\CP^2)|_{[{\bf z}] }\cong \C^3 / [{\bf z}]$,
the contact distribution $\mathcal{D}|_{ [{\bf z}]}$ is  the complex subspace $[{\bf z}]^\perp/[{\bf z}]$,
where  $[{\bf z}]^\perp$ is the orthogonal complement of  the complex line $[{\bf z}]$, with respect to the
pseudohermitian scalar product $\langle \,,\rangle$. Since  $[{\bf z}]$ is an isotropic line, $\langle \,,\rangle$
induces a positive definite Hermitian scalar product on $[{\bf z}]^\perp/[{\bf z}]$.
Note that the complex structure on
$\mathcal{D}|_{ [{\bf z}]}\cong[{\bf z}]^\perp/[{\bf z}]$ is just the multiplication by $i$.
Using these identifications,
we have
\begin{equation}
 d\zeta\big|_{[{\bf z}]}(U,JV)=\langle U,V\rangle,\ \forall \,\, U,V \in [{\bf z}]^\perp/[{\bf z}].
  \end{equation}
Then, $ d\zeta|_{\mathcal{D}\times\mathcal{D}}$ is tamed by $J$ and $\mathcal{S}$, equipped
with the contact form $\zeta$ and the complex structure $J$ on the contact
distribution $\mathcal{D}$, is a CR manifold.

\subsection{The structure equations}

Let $\alpha_1^1$, $\beta_1^1$, $\alpha_1^2$, $\beta_1^2$, $\alpha_1^3$, $\alpha_3^2$, $\beta_3^2$, $\alpha_3^1$ be
the entries of the Maurer--Cartan form
\begin{equation}
A^{-1}dA=
\begin{pmatrix}
\alpha_1^1 +i \beta_1^1 & -i  \alpha_3^2 - \beta_3^2 & \alpha_3^1\\
\alpha_1^2 +i \beta_1^2 &-2i \beta_1^1 &  \alpha_3^2 +i\beta_3^2 \\
 \alpha_1^3 &i \alpha_1^2 + \beta_1^2& -\alpha_1^1 +i \beta_1^1\\
\end{pmatrix}
\end{equation}
of the special unitary group ${G}$ of the pseudohermitian scalar product \eqref{hp}.
Then
$$
  (\alpha_1^1 , \beta_1^1 , \alpha_1^2 , \beta_1^2 ,\alpha_1^3 , \alpha_3^2 , \beta_3^2 , \alpha_3^1)
    $$
is a basis of the dual space ${\mathfrak g}^*$ of the Lie algebra ${\mathfrak g}$ of ${G}$ satisfying
the following {Maurer--Cartan equations}:
\begin{equation}
\begin{cases}
d\alpha_1^1\ =&-\alpha_3^2 \wedge\beta_1^2 + \beta_3^2 \wedge\alpha_1^2 - \alpha_3^1 \wedge\alpha_1^3\\
d\beta_1^1\ =&\alpha_3^2 \wedge\alpha_1^2 + \beta_3^2 \wedge\beta_1^2\\
d\alpha_1^2\ =&\alpha_1^1 \wedge\alpha_1^2 - 3 \beta_1^1 \wedge\beta_1^2 - \alpha_3^2 \wedge\alpha_1^3 \\
d\beta_1^2\ =&3 \beta_1^1 \wedge\alpha_1^2 + \alpha_1^1 \wedge\beta_1^2 - \beta_3^2 \wedge\alpha_1^3\\
d\alpha_1^3\ =&2 \alpha_1^1 \wedge\alpha_1^3 + 2 \alpha_1^2 \wedge\beta_1^2 \\
d\alpha_3^2\ =&-\alpha_1^2 \wedge\alpha_3^1 - \alpha_1^1 \wedge\alpha_3^2 - 3 \beta_1^1 \wedge\beta_3^2 \\
d\beta_3^2\ =&-\beta_1^2 \wedge\alpha_3^1 + 3 \beta_1^1 \wedge\alpha_3^2 - \alpha_1^1 \wedge\beta_3^2 \\
d\alpha_3^1\ =&-2 \alpha_1^1 \wedge\alpha_3^1 -2 \alpha_3^2 \wedge \beta_3^2.\\
 \end{cases}
 \end{equation}

\subsection{The Chern--Moser fibration}
The group $G$ acts transitively and almost effectively on
$\S$ by
\begin{equation}
 A[{\bf z}] = [A {\bf z}], \quad \forall \,\,A\in {G}, \,\,\forall \,\,[{\bf z}] \in\S.
  \end{equation}
This action descends to an effective action of $[{G}]={ G}/{Z}$ on
$\S$.
It is a classical result of E. Cartan \cite{Cartan1932,Cartan1932-2,ChMo1974} that $[{G}]$
is the group of CR automorphisms
of $\S$.


The natural projection
\begin{equation}
 \pi_\S:{G}\ni A \mapsto [A_1]\in\S
  \end{equation}
makes ${G}$ into a (trivial) principal fiber-bundle with structure group
\begin{equation}
 {G}_0=\left\{ A\in {G} \mid A[{^t\!(}1,0,0)]=[{^t\!(}1,0,0)] \right\}.
   \end{equation}
The elements of ${G}_0$ consist of all $3\times3$ unimodular matrices of the form
\begin{equation}\label{gauge}
{\bf X}(\rho,\theta,v,r)=
\begin{pmatrix}
\rho e^{i\theta} & -i\rho e^{-i\theta}\overline{v} & e^{i\theta}(r-\frac{i}{2}\rho |v|^2) \\
                       0 & e^{-2i\theta} & v \\
                       0 & 0 & \rho^{-1}e^{i\theta} \\
                    \end{pmatrix},
                     \end{equation}
where $v\in \C$, $r \in \R$, $0\leq\theta <2\pi$, and $\rho>0$.

\begin{defn}
The principal bundle $\pi_\S: {G}\to\S$ is the \emph{Chern--Moser bundle} of $\S$.
\end{defn}

\begin{remark}
The left-invariant 1-forms $\alpha_1^2$, $\beta_1^2$, $\alpha_1^3$ are semi-basic
and linearly independent. So, if $s:U\subseteq\S\to{G}$ is a local cross section of $\pi_\S$,
then
$$\left(s^*\alpha_1^3 , s^*\alpha_1^2, s^* \beta_1^2  \right)$$
defines a coframe
on $U$ and $s^*\alpha_1^3 $ is a positive contact form.
Let $s_1$, $s_2$, and $s_3$ be
the parallelization of $U$ dual to the coframe. Then ${\bf  s}= (s_1,s_2,s_3)$ is a
local cross-section of the structure bundle $P(\S)$.
 If $\tilde s : U\subseteq \S\to{G}$ is another cross section of the canonical bundle and
$$
 \tilde s=s X(\rho,\theta,v,r),
   $$
then
\begin{equation}
\tilde s^*\!
\begin{pmatrix}
\alpha_1^3\\
\alpha_1^2\\
\beta_1^2\\
\end{pmatrix}
\begin{pmatrix}
\rho^2& 0& 0\\
\rho^2[-p\cos 3\theta +q\sin 3\theta]& \rho\cos 3\theta&-\rho\sin 3\theta\\
\rho^2[-p\sin 3\theta -q\cos 3\theta]& \rho\sin 3\theta &\rho\cos 3\theta\\
\end{pmatrix}
s^*
\begin{pmatrix}
\alpha_1^3\\
\alpha_1^2\\
\beta_1^2\\
\end{pmatrix}
  \end{equation}
where $p$ and $q$ are the real and the imaginary parts of $v$. This implies
\begin{equation}
\tilde {\bf s}={\bf s}
\begin{pmatrix}
\frac{1}{\rho^2} & 0 &0\\
\frac{p}{\rho} &\frac{\cos 3\theta}{\rho}&\frac{\sin 3\theta}{\rho}\\
\frac{q}{\rho}&-\frac{\sin 3\theta}{\rho}&\frac{\cos 3\theta}{\rho}\\
\end{pmatrix}
\end{equation}
Thus, the adapted linear frame $\tilde {\bf s}$ at a point $[{\bf z}]\in U$ only depends on
the value of $s$ at $[{\bf z}]$.
Therefore, there exists a unique map
\begin{equation}
 \Lambda :{G}\to P(\S)
  \end{equation}
such that $\Lambda(A)={\bf s}({ [A_1]})$, where ${\bf s}$ is the adapted linear frame
field induced by any local cross section $s: U\to {G}$ defined in an open neighborhood
$U$ of $[A_1]$ such that $s([A_1]) = A$. Let $\varphi: {G}_0 \to K$ be the
3-dimensional representation of ${G}_0$ onto the subgroup $K\subset \GL_+(3,\R)$ (cf. Definition \ref{K})
defined by
\begin{equation}\label{repr}
\varphi: {G}_0 \ni X(\rho,\theta,p+iq,r)\mapsto
\begin{pmatrix}
\frac{1}{\rho^2} & 0 &0\\
\frac{p}{\rho}&\frac{\cos 3\theta}{\rho}&\frac{\sin 3\theta}{\rho}\\
\frac{q}{\rho}&-\frac{\sin 3\theta}{\rho}&\frac{\cos 3\theta}{\rho}\\
\end{pmatrix}
\in \GL_+(3,\R),
\end{equation}
then
\begin{equation}
 \Lambda(AX)=\Lambda(A)\varphi(X), \hspace{0.7cm}\forall \, A\in{G},\, \forall\, X\in{G}_0.
  \end{equation}
From this, it follows that the map $\Lambda$ gives ${G}$ the structure of a principal
fiber bundle over $P(\S)$ with structure group
\begin{equation}
\left\{
\begin{pmatrix}
\varepsilon & 0&r\varepsilon\\
0&\varepsilon&0\\
0&0&\varepsilon\\
\end{pmatrix}\mid \varepsilon\in\C,\, \varepsilon^3=1,\, r\in\R
   \right\}.
\end{equation}
In addition, the tangent bundle $T(\S)$ can be canonically identified with ${G}\times_\varphi \R^3$,
that is, $T(\S)$ is the quotient manifold of ${G}\times \R^3$ by the free action
of ${G}_0$ on
${G}\times \R^3$, given by
\begin{equation}
 (A,{\bf u})*Y = (AY,\varphi(Y)^{-1}{\bf u} ).
  \end{equation}
\end{remark}

\subsection{The Heisenberg group and the Heisenberg projection}

Consider ${P}_{0}=[{^t\!(}1,0,0)]\in {\mathcal S}$ and ${P}_{\infty}=[{^t\!(}0,0,1)]\in {\mathcal S}$ as the origin
and the point at infinity of $\S$. Then, $\dot{\S}:= \S\setminus\{P_\infty\}$ can be identified with
Euclidean 3-space with its standard contact structure
$dz - ydx + xdy$
by means of the {\em Heisenberg projection}\footnote{This map is the analogue of the stereographic projection in M\"obius (conformal) geometry.}
$$
 p_H: \dot{\S}\ni [{\bf z}]\mapsto {^t\!\left(\Re(z_2/z_1),\Im(z_2/z_1),\Re(z_3/z_1)\right)}\in \R^3.
  $$
The inverse of the Heisenberg projection is the {\em Heisenberg chart}
$$
  j_H : \R^3 \ni {^t\!(}x, y, z) \mapsto \left[{^t\!\big(}1,x+iy,z+\frac{i}{2}(x^2+y^2)\big)\right]\in\dot{\S}.
   $$
The Heisenberg chart $j_H$ can be lifted to a map
 \begin{equation}
 \mathcal{J}_H : \R^3\ni {^t\!(}x, y, z) \mapsto
\begin{pmatrix}
1 & 0 &0\\
x+iy&1&0\\
z+\frac{i(x+iy)}{2}&ix+y&1\\
\end{pmatrix}
\in {G}
\end{equation}
 whose image is a 3-dimensional closed subgroup $\H^3$ of ${G}$,
 which is isomorphic to the 3-dimensional {\em Heisenberg group}.

\begin{remark}\label{1.1.5.1}
Under the identification $\dot{\S}\cong\R^3$, $\mathcal{J}_H$ originates the cross section
 \begin{equation}
 s_H : \dot{\S} \ni [{\bf v}]_{\mathbb C}\mapsto
\begin{pmatrix}
1 & 0 &0\\
v_1^{-1}v_2&1&0\\
v_1^{-1}v_3&i\overline{v_1^{-1}v_2}&1\\
\end{pmatrix}
\in {G}
\end{equation}
which, in turns, gives rise to the adapted linear frame field ${\bf s}_H : \dot{\S}\to P(\S)$.
By construction,
$$
  s_H{}^*(\alpha_1^3 ) = dz + xdy -ydx,\quad s_H{}^*(\alpha_1^2 ) = dx,\quad s_H{}^*(\beta_1^2 ) = dy,
    $$
where $x$, $y$ and $z$ are the Heisenberg coordinates. This implies that
$$
   {\bf s}_H=(\partial_z, \partial_x+y \partial_z, \partial_y-x \partial_z).
      $$
If $s: U \subseteq \dot{\S}\to{G}$ is any other local cross-section, then
${s}_H=sY(\rho,\theta,p+iq,r)$, where $Y(\rho,\theta,p+iq,r):U\to{G}_0$ is a smooth map.
The linear frames ${\bf s}$, ${\bf s}_H : U\to P(\S)$ are related by
$$
 {\bf s}={\bf s}_H\varphi(Y)^{-1}.
   $$
   From this we obtain
\begin{equation}
\begin{cases}
s_1=  \frac{1}{\rho^2}\partial_z  +\frac{p}{\rho}(\partial_x +y\partial_z) +\frac{q}{\rho}(\partial_y -x\partial_z),\\
s_2= \frac{\cos 3\theta}{\rho} (\partial_x +y\partial_z) -\frac{\sin 3\theta}{\rho}(\partial_y -x\partial_z),\\
s_3= \frac{\sin 3\theta}{\rho} (\partial_x +y\partial_z) +\frac{\cos 3\theta}{\rho}(\partial_y -x\partial_z).\\
\end{cases}
\end{equation}
\end{remark}

\subsection{The Fefferman fibration}

Let $\mathcal{E}\subset \R P^5$ be the nondegenerate smooth hyperquadric of $\R P^5$ consisting of all lightlike
real lines of $\C^{2,1}\cong \R^{2,4}$. Let $x_1+iy_1$, $x_2+iy_2$, $x_3+iy_3$ be the coordinates
of $\C^{2,1}$, with respect to the pseudo-unitary basis $(U_1,U_2,U_3)$ given by
$$
U_1 = {^t\!\big(} \frac{1}{\sqrt{2}},0, \frac{i}{\sqrt{2}}\big),\quad  U_2 = {^t\!\big(}0,1,0\big),
\quad  U_3={^t\!\big(}\frac{i}{\sqrt{2}},0, \frac{1}{\sqrt{2}}\big).
  $$
In these coordinates, the equation defining $\mathcal{E}$ takes the form
$$
  - x_1^2- y_1^2+ x_2^2+ y_2^2+ x_3^2+ y_3^2= 0.
    $$
The restriction to $\mathcal{E}$ of the quadratic form
$$
 g_\mathcal{E} = 2 \frac{- dx_1^2 - dy_1^2 + dx_2^2 + dy_2^2
  + dx_3^2 + dy_3^2 }{x_1^2 + y_1^2 + x_2^2 + y_2^2 + x_3^2 + y_3^2}
    $$
defines a Lorentz pseudo-metric on $\mathcal{E}$ and the action of
${G}$ on $\mathcal{E}$ is by conformal transformations.
The map
 \begin{equation}
 \varphi:  \R^2\times\R^4 \supset {\rm S}^1\times {\rm S}^3\ni (x_1+iy_1,x_2+iy_2,x_3+iy_3)\mapsto
    \Big[ \sum_{j=1}^3(x_j+iy_j)U_j\Big]_\R \in \mathcal{E}
     \end{equation}
is a smooth diffeomorphism such that
$$
   \varphi^*(g_\mathcal{E}) =- dx_1^2 - dy_1 ^2 + dx_2^2 + dy_2^2 + dx_3 ^2 + dy_3 ^2.
   $$
Thus, $\mathcal{E}$ can be identified with the compact form $( {\rm S}^1\times {\rm S}^3,\varphi^*(g_\mathcal{E}))$
of the Einstein static universe \cite{DMN}. The map
$$
  \Psi :  \mathcal{E} \ni [V]_\R \mapsto [V]_\C\in \S
    $$
makes $\mathcal{E}$ into a trivial circle bundle over $\S$. This is called the {\em Fefferman fibration}.
The action of the structure group on $\mathcal{E}$ is given by
$$
  \Big[ \sum_{j=1}^3(x_j+iy_j)U_j\Big]_\R*e^{ it}= \Big[ \sum_{j=1}^3(x_j+iy_j)U_j\Big]_\R
    $$
and the map
$$
  \S\ni \left[{^t\!\Big(}\frac{1+w_1}{2},\frac{i w_2}{\sqrt2},i\frac{1-w_1}{2}\Big)\right]_\C
  \mapsto\left[{^t\!\Big(}\frac{1+w_1}{2},\frac{i w_2}{\sqrt2},i\frac{1-w_1}{2}\Big)\right]_\R\in \mathcal{E}
   $$
defines a global cross section of the Fefferman fibration.

\begin{remark}
The fibers of the Fefferman fibration are lightlike geodesics of the Einstein static pseudo-metric
and originates a share-free congruence of rays \cite{BarDrag,Musso1992,RobinTraut1985,RobinTraut1985Pr}.
The group $G$ can be viewed as the group of
orientation and time-orientation preserving conformal transformations of $\mathcal{E}$
which in addition preserves the shear-free congruence of rays.
\end{remark}

\subsection{Maximal compact abelian subgroups}

The maximal compact abelian subgroups of ${G}$ are conjugate to the 2-dimensional torus
\begin{equation}\label{torus}
{\rm T}^2=\left\{ {R}(\theta_1,\theta_2) \mid  \theta_1,\theta_2 \in \R \right\}\subset{G},
\end{equation}
where
\begin{equation}
{R}(\theta_1,\theta_2)=
\begin{pmatrix}
e^{-\frac{i}{6}(\theta_1 +2\theta_2)}\cos \frac{\theta_1}{2}& 0&e^{-\frac{i}{6}(\theta_1 +2\theta_2)}\sin \frac{\theta_1}{2}\\
0& e^{\frac{i}{3}(\theta_1 +2\theta_2)}&0\\
-e^{-\frac{i}{6}(\theta_1 +2\theta_2)}\sin \frac{\theta_1}{2}&0&e^{-\frac{i}{6}(\theta_1 +2\theta_2)}\cos \frac{\theta_1}{2}\\
\end{pmatrix}.
\end{equation}

\begin{defn}
We call $\theta_1$ and $\theta_2$ the {\em Clifford angle-variables} of
${\rm T}^2$.
\end{defn}

Note that ${R}(\theta_1,\theta_2)$ can be factorized as
\begin{equation}
 {R}(\theta_1,\theta_2)={R}(\theta_1,0){R}(0,\theta_2) .
\end{equation}
Correspondingly, let $K'$ and $K''$ be the subgroups
\begin{equation}
 K'= \{ {R}(\theta,0) \mid \theta\in\R\},\hspace{0.5cm}K'' = \{ {R}(0,\theta) \mid \theta\in\R\}.
  \end{equation}
Within the Heisenberg model, the action of ${R}(\theta_1,\theta_2)$ as a pseudo-group of contact transformations
is given by
\begin{equation}
 { R}(\theta_1,\theta_2)V = p_H \left({ R}(\theta_1,\theta_2)  j_H (V)\right).
   \end{equation}
%
It follows that ${R}(0,\theta_2)$ is the counterclockwise rotation of an angle $\theta_2$
around the positive oriented $z$-axis and ${R}(\theta_1,0)$ is a
``counterclockwise elliptical toroidal rotation'' of angle $\theta_1$ around the Clifford circle: $x^2+y^2=2$, $z=0$.

The arc
$$
   \Sigma=\left\{P(\rho)=\Big[{^t\!\big(}1,\rho,\frac{i}{2}\rho^2\big)\Big] \mid 0\leq \rho\leq \sqrt{2}\right\}
      $$
        is a slice for the action of ${\rm T}^2$ on ${\mathcal S}$.
Let ${\mathcal T}_\rho$ denote the orbit of ${\rm T}^2$ through the point $P(\rho)$.
The singular orbits are ${\mathcal T}_0$ and ${\mathcal T}_{\sqrt{2}}$. The Heisenberg projection of ${\mathcal T}_0$
is the  $z$-axis, while the Heisenberg projection of ${\mathcal T}_{\sqrt{2}}$ is the  the Clifford circle.
The regular orbits of ${\rm T}^2$ are the {\em standard Heisenberg cyclides} with parameter $\rho\in (0,\sqrt{2})$,
that is, the rotationally invariant 2-dimensional tori ${\mathcal T}_\rho$,
whose parametric equations in the Heisenberg coordinates take the form
$F_\rho(\theta_1,\theta_2) = R_z(\theta_2)\eta_\rho(\theta_1)$.
Here $R_z(\theta_2)$ denotes the rotation through angle $\theta_2$
around the $z$-axis and $\eta_\rho (\theta_1) = {^t\!(}x_\rho(\theta_1),y_\rho(\theta_1),z_\rho(\theta_1))$ is defined by
\begin{equation}\label{elp}
\begin{cases}
x_\rho(\theta_1)= 2\rho\left(\frac{2+\rho^2+(2-\rho^2)\cos\theta_1}{4+\rho^4+(4-\rho^4)\cos\theta_1}\right),\\
y_\rho(\theta_1)=-\frac{2\rho(\rho^2-2)\sin\theta_1}{4+\rho^4+(4-\rho^4)\cos\theta_1},\\
z_\rho(\theta_1)=\frac{(\rho^4-4)\sin\theta_1}{4+\rho^4+(4-\rho^4)\cos\theta_1}.
\end{cases}
\end{equation}

\begin{defn}\label{axes}
Let $\hat{{\rm T}}^2$ be any maximal compact abelian subgroup of ${G}$. Then,
$\hat{{\rm T}}^2 =B {{\rm T}^2} B^{-1}$, and hence ${\mathcal O}_1=B{\mathcal T}_{0}B^{-1}$ and
${\mathcal O}_2=B{\mathcal T}_{\sqrt{2}}B^{-1}$ are the two singular orbits of the action of
$\hat{{\rm T}}^2$ on $\S$.
We call ${\mathcal O}_1$ and ${\mathcal O}_2$ the \emph{axes of symmetry}
of the maximal torus $\hat{{\rm T}}^2$. The regular orbits of the action of $\hat{{\rm T}}^2$ on $\S$
are 2-dimensional tori, referred to as {\em Heisenberg cyclides}.\footnote{The terminology ``Heisenberg cyclide''
is used is analogy with the classical notion of {\em cyclides of Dupin} in M\"obius geometry (cf. \cite{JMNbook} and
the literature therein).}
They can be obtained via the action of ${G}$ on standard ones.
\end{defn}

\begin{figure}[h]
\begin{center}
\includegraphics[height=6.2cm,width=6.2cm]{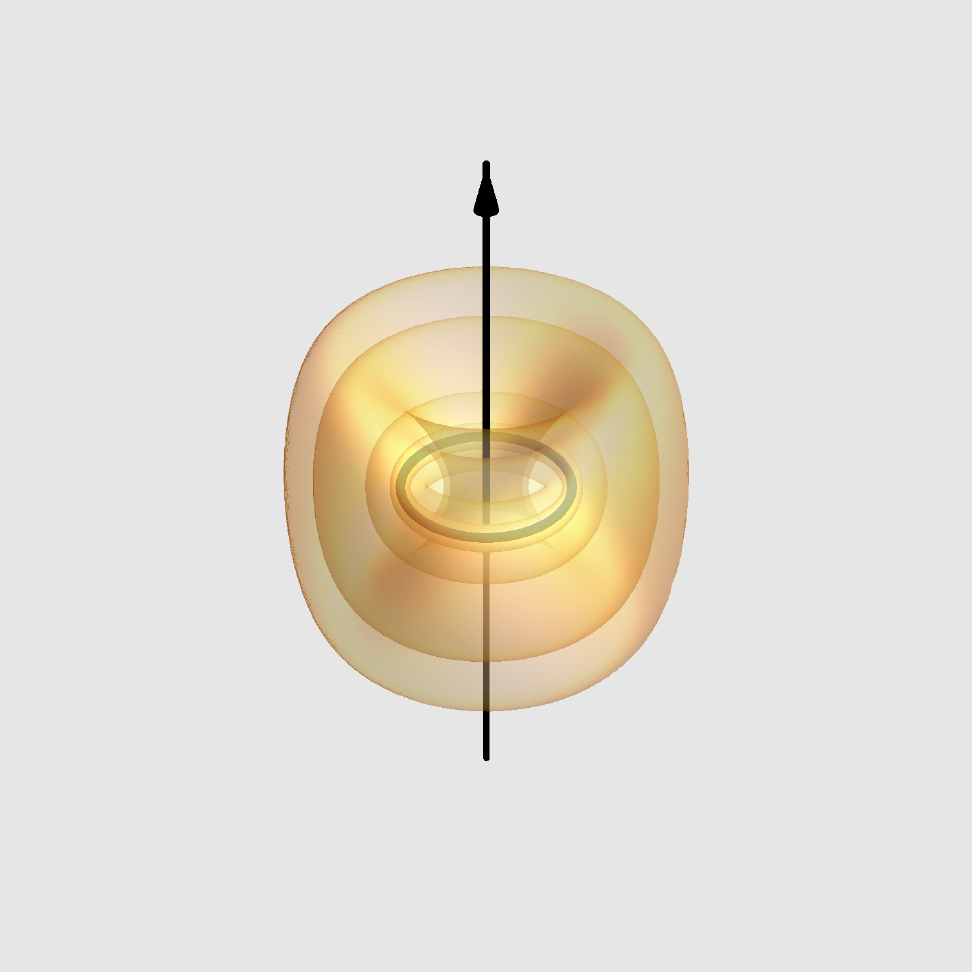}
\caption{\small{Standard Heisenberg cyclides with parameters $\rho=0.6$, $0.7$, $1.6$, and the two singular orbits
(the $z$-axis and the Clifford circle).}}\label{FIG1}
\end{center}
\end{figure}


\subsection{Chains}
We start by recalling the following.

\begin{defn}[\cite{Cartan1932,Cartan1932-2,ChMo1974,Feff1976,Koch}]
A curve of $\S$ consisting of the lightlike lines orthogonal to a fixed spacelike vector is said to be
a \emph{chain}.
For a given spacelike vector ${\bf S} = (S_1,S_2,S_3)$, the corresponding chain is denoted by
$$
  \mathcal{C}_{ [{\bf S}]} = \{[{\bf z}]\in\S \mid \langle {\bf S}, {\bf z}\rangle = 0\}.
    $$
\end{defn}

\begin{remark}
The totality of all chains can be identified with the open domain $\Omega_+ \subset\mathbb P (\C^{2,1})$ defined by
$$
  -iz_1 \overline{z}_3 + iz_3 \overline{z}_1 + z_2 \overline{z}_2 > 0.
   $$
Note that $\Omega_+$ is naturally equipped with a ${G}$-invariant K\"ahler structure of type $(1,1)$.
\end{remark}

\begin{ex}
Let $(E_1,E_2,E_3)$ be the standard basis of $\C^{2,1}$.
The singular orbit ${\mathcal T}_0\subset\S$ of the maximal torus ${\rm T}^2$ is the chain of all lightlike lines
of $\mathbb P (\C^{2,1})$ orthogonal to the spacelike vector $U_+ = E_1 + iE_3$. The other singular orbit ${\mathcal T}_{\sqrt{2}}\subset\S$ is the chain of all lightlike lines of $\mathbb P (\C^{2,1})$ orthogonal to
the spacelike vector $U_-= iE_1 + E_3$.
Since ${G}$ acts transitively on chains, the chains are the singular orbits of a maximal compact abelian subgroup
of ${G}$.
\end{ex}

In the Heisenberg model, excluding the lines parallel to the $z$-axis (that is, the chains passing
through the point at infinity), all other chains are ellipses (possibly circles) whose projections
onto the $xy$-plane are circles. More precisely, if ${\bf S } =(S_1,S_2,S_3)$ is a spacelike vector
such that $S_1\neq 0$, then the Heisenberg projection of the chain $\mathcal{C}_{ [{\bf S}]}$ is the
ellipse parameterized by
\begin{equation}
\begin{cases}
x(t)= \frac{\|{\bf S}\|}{|S_1|}\cos t + \Re (\frac{ S_2}{S_1}),\\
y(t)=\frac{\|{\bf S}\|}{|S_1|}\sin t  + \Im (\frac{ S_2}{S_1}),\\
z(t)=\frac{\|{\bf S}\|}{|S_1|}\left(\Im (\frac{ S_2}{S_1})\cos t - \Re (\frac{ S_2}{S_1})\sin{t}\right)
      +\Re (\frac{ S_3}{S_1}).
\end{cases}
\end{equation}

\begin{figure}[h]
\begin{center}
\includegraphics[height=6.2cm,width=6.2cm]{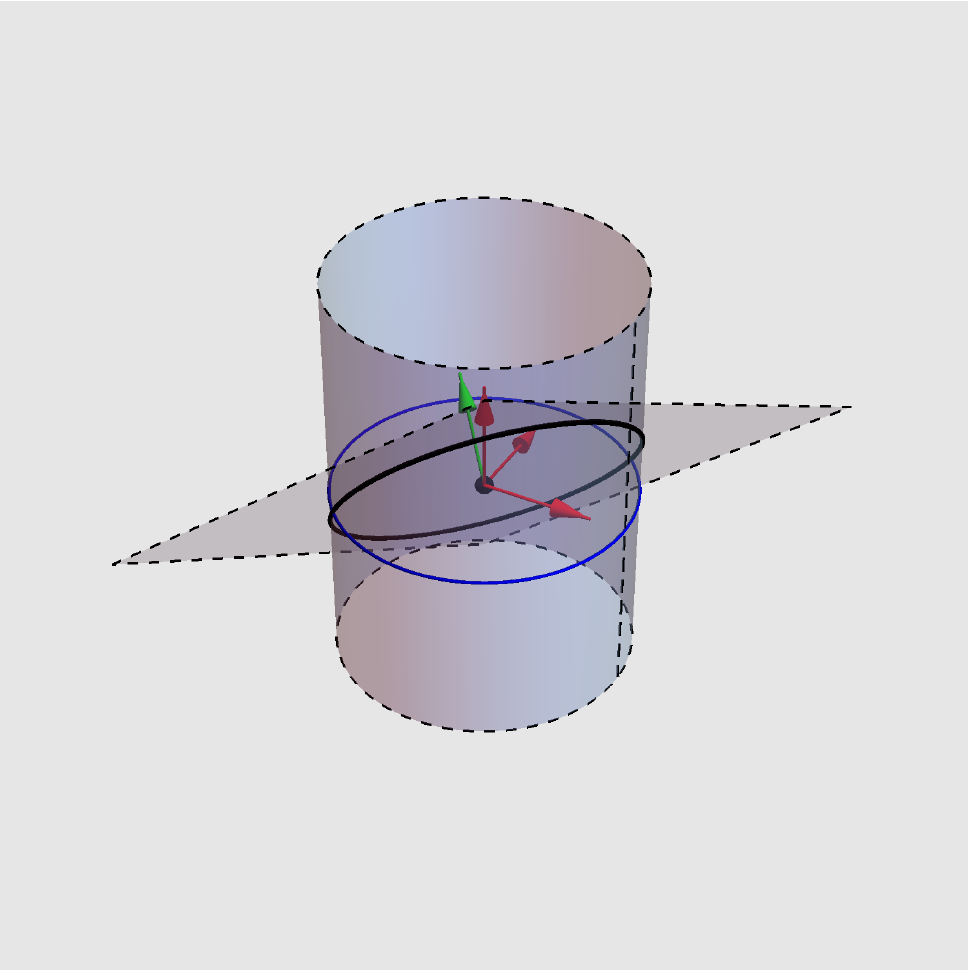}
\caption{\small{The Heisenberg model of the chain $\mathcal{C}_{[\mathbf S]}$,
where $\mathbf{S} = {^t\!(}0.9,e^{i\frac{\pi}{4}}, -i)$.}}\label{FIG2}
\end{center}
\end{figure}

We have the following.

\begin{prop}
The chains are transversal to the contact distribution. Moreover, for each point $p\in\S$ and for each tangent vector
${\bf v}\in T_p(\S)$, with ${\bf v}\notin \mathcal{D}_p$, there exists a unique chain passing
through $p$ and tangent to ${\bf v}$ at $p$.
\end{prop}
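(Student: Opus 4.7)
My plan is to work in the homogeneous model and identify a tangent vector to $\S$ at $p=[{\bf z}]$ with a class $[{\bf w}]$, where ${\bf w}\in\C^{2,1}$ satisfies $\Re\langle{\bf z},{\bf w}\rangle=0$, modulo $\C{\bf z}$. Reading off the imaginary part of \eqref{cf}, the contact distribution $\mathcal{D}_p$ corresponds precisely to those classes for which $\langle{\bf z},{\bf w}\rangle=0$ outright, so the condition ${\bf v}\notin\mathcal{D}_p$ becomes $\mu:=\Im\langle{\bf z},{\bf w}\rangle\neq 0$.

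For transversality, I would differentiate the defining relation $\langle{\bf S},{\bf z}(t)\rangle\equiv 0$ of the chain $\mathcal{C}_{[{\bf S}]}$, obtaining that its tangent vector at $p$ is represented by some ${\bf w}\in{\bf S}^\perp$. If this vector also belonged to $\mathcal{D}_p$, it would additionally satisfy $\langle{\bf z},{\bf w}\rangle=0$, hence ${\bf w}\in{\bf S}^\perp\cap{\bf z}^\perp$. Since ${\bf S}$ is spacelike while ${\bf z}$ is isotropic, the two vectors are $\C$-linearly independent and their common orthogonal complement is a single complex line; because ${\bf z}$ itself lies in both ${\bf z}^\perp$ and (by the chain condition) ${\bf S}^\perp$, this line must equal $\C{\bf z}$. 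Hence ${\bf w}$ represents the zero tangent vector, proving transversality.

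For the second assertion, fix ${\bf v}\in T_p\S\setminus\mathcal{D}_p$ and lift it to ${\bf w}$ as above with $\mu\neq 0$. Any chain through $p$ tangent to ${\bf v}$ must be defined by an ${\bf S}$ orthogonal to both ${\bf z}$ and ${\bf w}$, so ${\bf S}\in\mathrm{span}_\C\{{\bf z},{\bf w}\}^\perp$. The Gram matrix of $({\bf z},{\bf w})$ has determinant $-\mu^2<0$, so $\mathrm{span}_\C\{{\bf z},{\bf w}\}$ is a non-degenerate $\C$-plane of signature $(1,1)$. Its orthogonal complement in $\C^{2,1}$, which has signature $(2,1)$, is therefore a positive-definite complex line, and any nonzero ${\bf S}$ on it is automatically spacelike. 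Because this line does not depend on the choice of lift (changing ${\bf w}$ by a multiple of ${\bf z}$ preserves the span) and the chain is determined by $[{\bf S}]\in\mathbb{P}(\C^{2,1})$, the resulting chain is unique. The argument is elementary; the only substantive observation is the signature count identifying $\mathrm{span}_\C\{{\bf z},{\bf w}\}^\perp$ as a positive complex line, so I do not foresee any serious obstacle.
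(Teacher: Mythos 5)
Your proof is correct. Note that the paper does not actually supply an argument for this proposition: it is stated as a fact, with the subsequent remark merely exhibiting the explicit Heisenberg-model parametrization of the chain through a given point with given transversal tangent vector, and the underlying mechanism only surfaces later, in Section \ref{s:2}, where the osculating chain is defined as the set of null lines in the $(1,1)$-plane $[\Gamma\wedge\Gamma']_t$ and its uniqueness is asserted ``by construction.'' Your argument supplies exactly the missing linear algebra: identifying $\mathcal{D}_{[{\bf z}]}$ with $\{{\bf w}:\langle{\bf z},{\bf w}\rangle=0\}/\C{\bf z}$, computing the Gram determinant $-\mu^2<0$ of $({\bf z},{\bf w})$ to see that $\mathrm{span}_\C\{{\bf z},{\bf w}\}$ has signature $(1,1)$, and concluding that its orthogonal complement is a positive-definite complex line $[{\bf S}]$ --- this is precisely why the paper's $(1,1)$-plane determines a unique chain, and your transversality step (${\bf S}^\perp\cap{\bf z}^\perp=\C{\bf z}$ because ${\bf z}$ is a nonzero vector of this one-dimensional intersection) is clean and complete. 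The only point worth making explicit is that the tangent space to $\mathcal{C}_{[{\bf S}]}$ at $[{\bf z}]$ equals, not merely sits inside, the one-real-dimensional space $({\bf S}^\perp\cap\ker\Re\langle{\bf z},\cdot\rangle)/\C{\bf z}$, so that ${\bf v}$ lying in this space really means the chain is tangent to ${\bf v}$; a one-line dimension count settles this.
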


\begin{remark}
In the Heisenberg model, the chain passing through the point $(x_0,y_0,z_0)$ and tangent to the vector ${\bf v}= (x_1,y_1,z_1)$, such that $(x_1)^2+ (y_1)^2\neq 0$, is the smooth curve parametrized by
\begin{equation*}
\begin{cases}
x(t)=\frac{1}{2c_1}\big(y_1 + 2 c_1 x_0 -y_1 \cos(2 c_1 t) + x_1 \sin(2 c_1 t) \big),\\
y(t)=\frac{1}{2c_1}\big(-x_1 + 2 c_1 y_0 + x_1 \cos(2 c_1 t) + y_1 \sin(2 c_1 t) \big),\\
z(t)=z_0 +\frac{(x_0 x_1+y_0 y_1)}{2 c_1}-\frac{ x_0 x_1 + y_0 y_1}{2c_1}\cos(2 c_1 t)
+\frac{x_1^2 + 2 c_1 ( x_0 y_1 -x_1 y_0)+ y_1^2}{4c_1^2} \sin(2 c_1 t),
\end{cases}
\end{equation*}
where {$c_1 =\frac{x_1^2+y_1^2}{2 (x_1 y-x y_1-z_1)}$}. 
\end{remark}

\section{CR geometry of transversal curves}\label{s:2}

\subsection{
Transversal curves, lifts, CR inflection points, and CR isotopies}

\begin{defn}
Let $\gamma : J \to\S$ be a smooth curve. We say that $\gamma$ is \emph{transversal} (to the contact
distribution $\mathcal{D})$ if the tangent vector $\dot{\gamma}(t) \not\in \mathcal{D}|_{\gamma(t)}$, for every $t \in J$.
The parametrization $\gamma$ is said to be
\emph{positive} if $\zeta(\dot{\gamma}(t) ) > 0$, for every $t$ and for every positive contact form compatible
with the CR structure.
From now on, we will assume that the parametrization of a transversal curve is positive.
\end{defn}

\begin{defn}
Let $\gamma : J \to\S$ be a smooth curve.
A \emph{lift} of $\gamma$ is a map $\Gamma: J \to \mathcal{N}$ into the null-cone $\mathcal{N}\subset\C^{2,1}$,
such that $\gamma(t) = [\Gamma(t)]$, for every $t \in J$.
    \end{defn}

If $\Gamma$ is a lift, any other lift is given by $r\Gamma$, where $r$ is a smooth complex-valued function,
such that $r(t)\neq 0$, for every $t \in J$.
 From the definition of the contact distribution, we have the following.

 \begin{prop}
 A parameterized curve $\gamma : J \to\S$ is transversal
 and positively oriented if and only if
 $i\langle \Gamma,\Gamma'\rangle|_t > 0$, for every $t\in J$ and every lift $\Gamma$.
 \end{prop}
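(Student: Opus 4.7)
The plan is to establish the proposition in two short steps: first, verify that the quantity $i\langle\Gamma,\Gamma'\rangle$ is real and has a sign independent of the choice of lift; and second, identify it, up to a positive factor, with $\zeta(\dot\gamma)$ via a direct pullback computation.

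For the first step, I would exploit the nullity of $\Gamma$: since $\Gamma(t)\in\N$, we have $\langle\Gamma(t),\Gamma(t)\rangle\equiv 0$, and differentiation gives $\langle\Gamma',\Gamma\rangle+\langle\Gamma,\Gamma'\rangle=2\,\Re\langle\Gamma,\Gamma'\rangle=0$, so $\langle\Gamma,\Gamma'\rangle$ is purely imaginary and $i\langle\Gamma,\Gamma'\rangle$ is real-valued. For any other lift $\tilde\Gamma=r\Gamma$, with $r\in C^\infty(J,\C^*)$, a one-line expansion gives
\[
\langle\tilde\Gamma,\tilde\Gamma'\rangle \;=\; |r|^2\,\langle\Gamma,\Gamma'\rangle + \bar r\, r'\,\langle\Gamma,\Gamma\rangle \;=\; |r|^2\,\langle\Gamma,\Gamma'\rangle,
\]
using $\langle\Gamma,\Gamma\rangle=0$. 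Hence $i\langle\tilde\Gamma,\tilde\Gamma'\rangle=|r|^2\,i\langle\Gamma,\Gamma'\rangle$, so that both the nonvanishing and the sign of $i\langle\Gamma,\Gamma'\rangle$ are intrinsic to $\gamma$.

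For the second step, I would pull back the $(1,0)$-form $\tilde\zeta=-i\,\langle\mathbf z, d\mathbf z\rangle/(\bar{\mathbf z}\cdot{}^t\!\mathbf z)$ from Section~\ref{S1.2} along the lift $\Gamma$. Since $|\Gamma(t)|^2$ is a strictly positive smooth function of $t$, one obtains $\Gamma^*\tilde\zeta=-i\,\langle\Gamma,\Gamma'\rangle\,|\Gamma|^{-2}\,dt$, which by Step~1 is real-valued, consistent with $\zeta=\tilde\zeta|_{T(\S)}$ being a real form. Therefore $\zeta(\dot\gamma(t))$ coincides, up to a positive factor, with $i\langle\Gamma(t),\Gamma'(t)\rangle$, the overall sign being fixed by the convention that makes $\zeta$ a positive contact form.

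The proposition is then immediate: transversality of $\gamma$ at $t$ amounts to $\dot\gamma(t)\notin\D|_{\gamma(t)}$, i.e., $\zeta(\dot\gamma(t))\neq 0$, equivalently $i\langle\Gamma(t),\Gamma'(t)\rangle\neq 0$ by Step~2; and the sign matching translates $\zeta(\dot\gamma(t))>0$ into $i\langle\Gamma(t),\Gamma'(t)\rangle>0$. The only delicate point is keeping track of sign conventions for the Hermitian form and for the positive contact form $\zeta$, but the underlying calculation is wholly elementary and I do not anticipate any substantive obstacle.
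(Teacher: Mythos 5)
Your overall strategy is the right one, and in fact it is essentially the only available one: the paper states this proposition without proof (it is presented as an immediate consequence of the definition of $\mathcal{D}$), so the two steps you propose — lift-independence of the sign of $i\langle\Gamma,\Gamma'\rangle$, followed by the identification of $\gamma^*\zeta$ with the pullback of $\tilde\zeta$ along a lift — are exactly what a written-out proof must contain. Step 1 is correct as you state it: conjugate symmetry gives $\langle\Gamma',\Gamma\rangle=\overline{\langle\Gamma,\Gamma'\rangle}$, so differentiating $\langle\Gamma,\Gamma\rangle\equiv 0$ shows $\langle\Gamma,\Gamma'\rangle$ is purely imaginary, and the computation $\langle r\Gamma,(r\Gamma)'\rangle=|r|^2\langle\Gamma,\Gamma'\rangle$ shows the sign is intrinsic.

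The genuine problem is the last sentence of Step 2. Your own (correct) formula
\begin{equation*}
\Gamma^*\tilde\zeta \;=\; -\,i\,\frac{\langle\Gamma,\Gamma'\rangle}{|\Gamma|^2}\,dt
\end{equation*}
identifies $\zeta(\dot\gamma(t))$ with a \emph{positive} multiple of $-i\langle\Gamma,\Gamma'\rangle$, not of $+i\langle\Gamma,\Gamma'\rangle$, and there is no residual ``convention'' left to absorb the sign: the form $\zeta$ is already pinned down by \eqref{cf} and is verified in Section \ref{S1.2} to be positive ($d\zeta(U,JU)=\langle U,U\rangle>0$ on $\mathcal{D}$). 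So your argument actually proves that $\gamma$ is transversal and positively oriented if and only if $-i\langle\Gamma,\Gamma'\rangle|_t>0$. This is not a defect of your method but a sign discrepancy in the printed statement itself: the rest of the paper consistently uses the opposite sign — e.g.\ the strain density $a_\gamma=i\langle\Gamma,\Gamma'\rangle^{-1}$ is positive precisely when $\langle\Gamma,\Gamma'\rangle=ib$ with $b>0$, the W-lifts of the symmetrical configurations are normalized by $-i\langle\Gamma,\Gamma'\rangle=1$, and for a frame field one finds $\langle A_1,A_1'\rangle=i\,a_1^3$ with $a_1^3>0$. You should therefore not wave the sign away as conventional; either carry the minus sign through to the (corrected) conclusion, or flag explicitly that the inequality in the statement must read $-i\langle\Gamma,\Gamma'\rangle|_t>0$ to be consistent with \eqref{cf} and \eqref{hp}. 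As written, the final step asserts an identification that contradicts the display immediately preceding it.
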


\begin{defn}
A \emph{frame field along $\gamma$} is a lift $A : J \to {G}$ of $\gamma$ to the total space of
the Chern--Moser fibration $\pi_\S : A \in {G} \to [A_1]\in\S$. Since the canonical fibration
is trivial, frame fields do exist for every transversal curve.
Note that if $A$ is a frame field along $\gamma$,
the first column vector $A_1$ of $A$ is a lift of $\gamma$.
\end{defn}

Let $A$ be a frame field along $\gamma$. Then
$$
A^{-1}A'=
\begin{pmatrix}
a_1^1 +i b_1^1 & -i  a_3^2 - b_3^2 & a_3^1\\
a_1^2 +i b_1^2 &-2i b_1^1 &  a_3^2 +ib_3^2 \\
 a_1^3 &i a_1^2 + b_1^2& -a_1^1 +i b_1^1
 \end{pmatrix},
  $$
where $a_1^3$ is a strictly positive real-value function. Any other frame field along $\gamma$
is given by $\tilde A = AX$, where $X : J \to {G}_0$ is a smooth map with values in the structural
group of the canonical fibration.
We then write
$$
  X(t) = X(\rho(t),\varphi(t),v(t),r(t))
     $$
where $\rho$, $r$, $\varphi$ are real-valued functions, with $\rho>0$, and $v$ is complex-valued.
If we let
$$
\tilde A^{-1}\tilde A'=
\begin{pmatrix}
\tilde a_1^1 +i \tilde b_1^1 & -i \tilde  a_3^2 - \tilde b_3^2 & \tilde a_3^1\\
\tilde a_1^2 +i \tilde b_1^2 &-2i \tilde b_1^1 & \tilde  a_3^2 +i \tilde b_3^2 \\
\tilde  a_1^3 &i \tilde a_1^2 + \tilde b_1^2& -\tilde a_1^1 +i \tilde b_1^1
\end{pmatrix},
$$
then
\begin{equation}\label{transf}
\tilde A^{-1}\tilde A'= X^{ -1}A^{-1} A'X + X^{ -1}X',
\end{equation}
which implies
\begin{equation}\label{a13}
 \tilde  a_1^3= \rho^2   a_1^3
   \end{equation}
and
\begin{equation}\label{a12}
  \tilde a_1^2 +i \tilde b_1^2 =e^{2i\varphi} \rho(e^{i\varphi} (a_1^2 +i  b_1^2 )-\rho (p+iq) a_1^3).
    \end{equation}
Therefore, along any parametrized transversal curve there exists
a frame field for which
\begin{equation}\label{first-order}
 a_1^3=1, \quad a_1^2 +i  b_1^2=0.
   \end{equation}

\begin{defn}
A frame field $A$ along $\gamma$
is said to be of {\em first order} if conditions \eqref{first-order} are satisfied.
\end{defn}

\begin{defn}
Let $\Gamma$ be a lift of a transversal curve $\gamma$.
If
$$
  \det(\Gamma,\Gamma',\Gamma'')\big|_{t_0} = 0,
      $$
for some $t_0\in J$, then $\gamma({t_0}) $ is called a \emph{CR inflection point}.
The notion of CR inflection point is independent of the lift $\Gamma$.
A transversal curve with no CR inflection points is said to be \emph{generic}.
The notion of a CR inflection point is invariant under reparametrizations of the curve and
under the action of the group of CR automorphisms.
\end{defn}

\begin{defn}
 If $\gamma$ is transversal and $\Gamma$ is one of its lifts, then the complex plane $[\Gamma\wedge\Gamma']_{ t}$
 is of type $(1,1)$ and the set of null complex lines contained in $[\Gamma\wedge\Gamma']_{ t}$ is a chain,
 which do not depend on the choice of the lift $\Gamma$. This chain is denoted by $\mathcal{C}_{ \gamma} |_t$ and is
 called the \emph{osculating chain} of $\gamma$ at $\gamma(t)$. By construction, $\mathcal{C}_{ \gamma} |_t$
 is the unique chain passing through $\gamma(t)$ and tangent to $\gamma$ at the contact point $\gamma(t)$.
\end{defn}

\begin{figure}[h]
\begin{center}
\includegraphics[height=6.2cm,width=6.2cm]{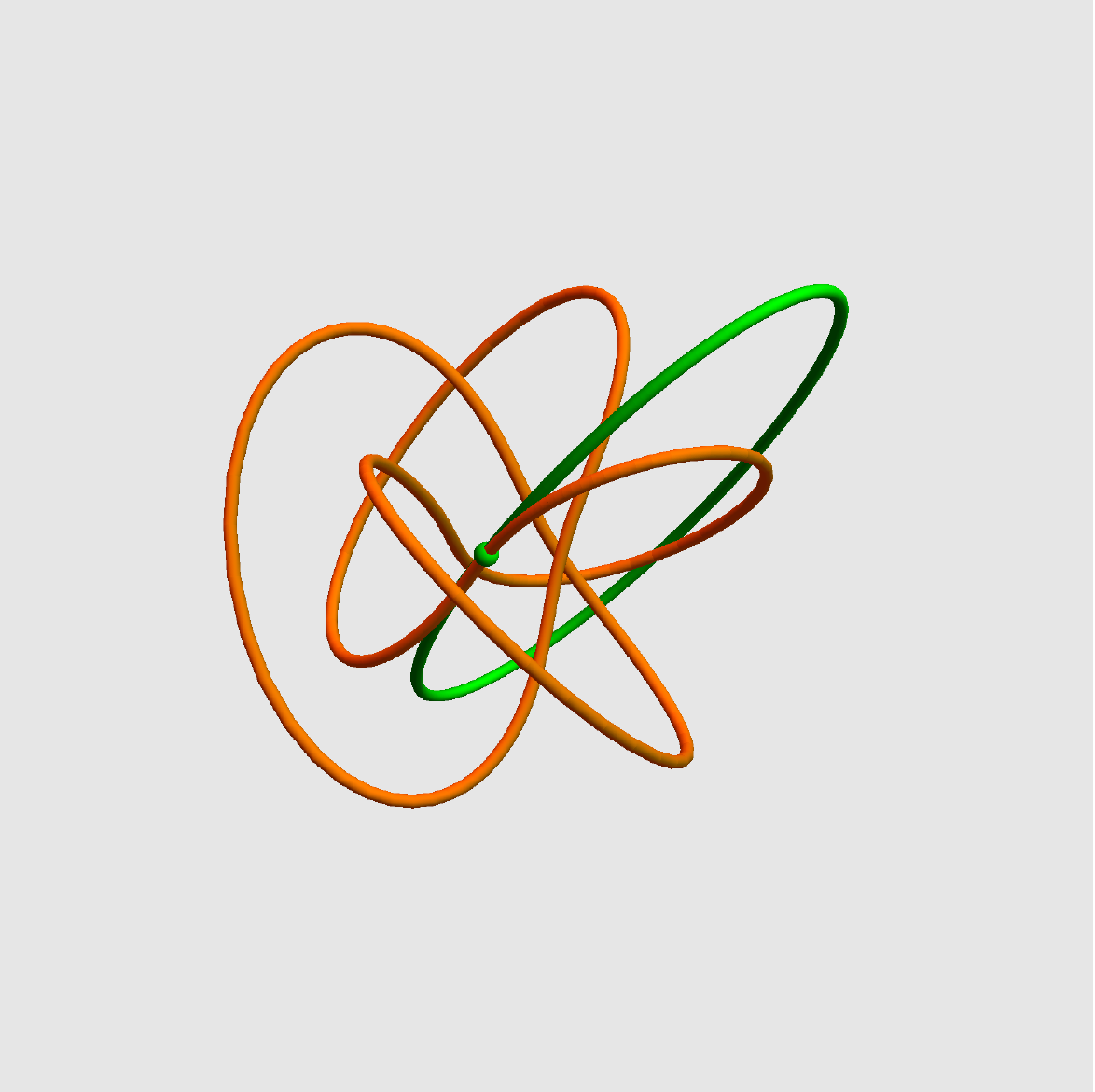}
\caption{\small{A transversal torus knot (orange) and one of its osculating chains (green).}}\label{FIG3}
\end{center}
\end{figure}

\begin{remark}
If $\det(\Gamma,\Gamma',\Gamma'')=0$, then $[\Gamma\wedge\Gamma']$ is constant and hence the curve is a chain.
Thus, chains can be characterized as those transversal curves all of whose points are CR inflection points.
\end{remark}

\begin{defn}
Let $\gamma : J \to\S$ be a transversal curve.
Consider the manifold $\Omega_+\subset \mathbb P(\C^{2,1})$ of all chains. Then,
$$
 \mathcal{C}_{ \gamma} :  J \ni t \mapsto \mathcal{C}_{ \gamma} |_t\in\Omega_+
   $$
   is a smooth curve. We call $\mathcal{C}_{ \gamma}$ the {\em osculating curve} of $\gamma$.
\end{defn}

We now prove the following.

\begin{prop}\label{trans-nec}
Let $\gamma$ be a transversal curve. The following are equivalent:
\begin{enumerate}
\item $\gamma(t_0)$ is a CR inflection point;
\item $d\mathcal{C}_{ \gamma} |_{t_0}=0$;
\item the order of contact of $\gamma$ with its osculating chain $\mathcal{C}_{ \gamma} |_{t_0}$
at the contact point $\gamma(t_0)$ is strictly bigger than 1.
\end{enumerate}
\end{prop}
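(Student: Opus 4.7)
The plan is to reduce both (2) and (3) to the linear-algebraic condition that $\Gamma(t_0),\Gamma'(t_0),\Gamma''(t_0)$ are linearly dependent in $\C^{2,1}$, which is exactly (1). Throughout, let $V(t):=\mathrm{span}_{\C}(\Gamma(t),\Gamma'(t))$. Since $\gamma$ is transversal one has $\langle\Gamma,\Gamma'\rangle\neq 0$, so $V(t)$ is a non-degenerate complex $2$-plane of Hermitian signature $(1,1)$, its orthogonal complement $V(t)^\perp$ is a spacelike complex line, and under the identification of chains with spacelike lines (via orthogonal complement) the osculating curve $\mathcal{C}_\gamma(t)\in\Omega_+$ is represented by the Pl\"ucker point $[\Gamma(t)\wedge\Gamma'(t)]\in\mathbb P(\Lambda^2\C^{2,1})$.

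For (1)$\Leftrightarrow$(2), I would first verify lift-invariance: replacing $\Gamma$ by $r\Gamma$ rescales $\Gamma\wedge\Gamma'$ and $\det(\Gamma,\Gamma',\Gamma'')$ by the nonvanishing factors $r^{2}$ and $r^{3}$ respectively, so neither (1) nor (2) depends on the chosen lift. The Leibniz rule then gives $(\Gamma\wedge\Gamma')'=\Gamma\wedge\Gamma''$, and the projective differential $d\mathcal{C}_\gamma|_{t_0}$ vanishes exactly when $\Gamma(t_0)\wedge\Gamma''(t_0)$ is a scalar multiple of $\Gamma(t_0)\wedge\Gamma'(t_0)$, equivalently $\Gamma''(t_0)\in V(t_0)$, which is (1).

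For (1)$\Leftrightarrow$(3), the key geometric fact is that, by definition, $\mathcal{C}_\gamma|_{t_0}$ consists of the null lines of $V(t_0)$, so it equals $\mathbb P(V(t_0))\cap\S$ as a smooth real $1$-submanifold through $\gamma(t_0)$; smoothness and transversality of the intersection inside $\S$ follow from the $(1,1)$ signature of $V(t_0)$. Consequently the order of contact of $\gamma$ with the chain coincides with the order of contact of the projective curve $\gamma\subset\CP^2$ with the projective line $\mathbb P(V(t_0))$. To compute the latter, pick a basis $(v_1,v_2,v_3)$ of $\C^{2,1}$ with $v_1=\Gamma(t_0)$, $v_2=\Gamma'(t_0)$, write $\Gamma''(t_0)=\alpha v_1+\beta v_2+c\,v_3$, and work in the affine chart $[z_1:z_2:z_3]\mapsto(z_2/z_1,z_3/z_1)$. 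A direct Taylor expansion at $t_0$ yields
\begin{equation*}
\gamma(t)=\Big((t-t_0)+\tfrac{\beta}{2}(t-t_0)^2,\ \tfrac{c}{2}(t-t_0)^2\Big)+O\bigl((t-t_0)^3\bigr),
\end{equation*}
while $\mathbb P(V(t_0))$ is the coordinate axis $\{z_3/z_1=0\}$. Thus $\gamma$ has contact of order strictly bigger than $1$ with $\mathbb P(V(t_0))$ precisely when $c=0$, that is $\Gamma''(t_0)\in V(t_0)$.

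The main technical obstacle I anticipate is justifying rigorously the identification of ``order of contact with the chain'' with ``order of contact with the projective line $\mathbb P(V(t_0))$'': using that $\gamma\subset\S$ together with the local equality $\mathcal{C}_\gamma|_{t_0}=\mathbb P(V(t_0))\cap\S$, the $2$-jet of $\gamma$ at $t_0$ lies in $\mathbb P(V(t_0))$ if and only if it lies in the chain, closing the argument.
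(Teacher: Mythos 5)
Your argument is correct, and it takes a genuinely different route from the paper's. The paper first proves a lemma producing a normalized first-order frame $\tilde A$ along $\gamma$ whose Maurer--Cartan form retains only the entries $\tilde a_3^2,\tilde b_3^2,\tilde a_3^1$; condition (1) then becomes $\tilde a_3^2|_{t_0}=\tilde b_3^2|_{t_0}=0$, condition (2) is read off from the identity $\tilde A_2\wedge\tilde A_2'=i(\tilde a_3^2+i\tilde b_3^2)\tilde A_1\wedge\tilde A_2$ applied to the lift $\tilde A_2$ of the osculating curve, and condition (3) is checked against the explicit parametrization $[\tilde A_1|_{t_0}+(t-t_0)\tilde A_3|_{t_0}]$ of the osculating chain via a jet-by-jet criterion for analytic contact of parametrized curves (the paper's ``Fact 1''). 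You dispense with the frame construction entirely: for (1)$\Leftrightarrow$(2) the Pl\"ucker representative $\Gamma\wedge\Gamma'$ and the Leibniz identity $(\Gamma\wedge\Gamma')'=\Gamma\wedge\Gamma''$ give the equivalence almost in one line (modulo the routine remark that $V\mapsto V^\perp$ and the Pl\"ucker map are embeddings, so vanishing of differentials is preserved), which is cleaner than the paper's computation; for (1)$\Leftrightarrow$(3) you replace contact with the chain by contact with the projective line $\mathbb P(V(t_0))$ and Taylor-expand in an affine chart. The only step your route requires that the paper's avoids is the one you flag yourself: that contact with $\mathcal C_\gamma|_{t_0}=\mathbb P(V(t_0))\cap\S$ equals contact with $\mathbb P(V(t_0))$, which holds because $\gamma$ lies in $\S$ and the intersection is transversal (a consequence of the $(1,1)$ signature of $V(t_0)$); this is sound, though for full rigor one should add the standard remark that, for immersed curves, the jet-in-submanifold notion of contact you use agrees with the reparametrization-based notion of the paper's Fact 1. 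In exchange, your version is more elementary and shorter, while the paper's frame lemma, once paid for, is reused immediately in the construction of the Wilczynski frame of Proposition \ref{2.2.3}.
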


\begin{proof}
First, we observe the following two facts.
\vskip0.1cm
\noindent{\bf Fact 1.}
Let $\beta$, $\gamma: J \subseteq \R\to \CP^3$ be two parametrized curves, such that $\gamma(t_0) = \beta(t_0)$,
and let $B$,
$\Gamma : J \to \mathcal{N}$ be the lifts of $\beta$ and $\gamma$, respectively. Then, $\beta$ and $\gamma$ have
a contact of order greater than 1 at the contact point $\gamma(t_0) = \beta(t_0)$ if and only if there exist
complex numbers $\rho_0\neq 0$, $\rho_1$, $\rho_2$ and real numbers $h_1\neq 0$, $h_2$, such that
\begin{enumerate}
\item $\Gamma |_{t_0} =\rho_0B  |_{t_0}$,
\item $\Gamma' |_{t_0} =\rho_1B |_{t_0} +\rho_0h_1B' |_{t_0}$,
\item $\Gamma'' |_{t_0} =\rho_2B |_{t_0}+(2\rho_1h_1+\rho_0h_2)B' |_{t_0}+\rho_0(h_1)^2B''  |_{t_0}$.
\end{enumerate}

\vskip0.1cm
\noindent{\bf Fact 2.} $d\gamma |_{t_0}=0$ if and only if $\Gamma |_{t_0}\wedge\Gamma' |_{t_0}=0$.

\vskip0.1cm
Next, we prove the following.

\begin{lemma}
Let $\gamma : J \to \S$  be a transversal curve. Then there exists a first order frame field
$\tilde A$ along $\gamma$, such that
\begin{equation}\label{canonical}
\tilde A^{-1}\tilde A'=
\begin{pmatrix}
0 & -i (\tilde  a_3^2 - i\tilde b_3^2) & \tilde a_3^1\\
0 &0 & \tilde  a_3^2 +i \tilde b_3^2 \\
1 & 0&0\\
 \end{pmatrix}
 \end{equation}
\end{lemma}

\begin{proof}[Proof of the Lemma]
Let $A : J \to {G}$ be any first order frame field along $\gamma$.
Then, \eqref{a13} and \eqref{a12} imply that any other first order frame field is given by
$$
  \tilde A=AX\big(1,\varphi(t),0,r(t)\big).
     $$
From \eqref{transf}, we get
$$
   \tilde a_1^1=a_1^1 - r
     $$
and
$$
   \tilde b_1^1=b_1^1+ \varphi'.
     $$
Then, by taking $r = a_1^1$ and $\varphi = -\int b_1^1 dt$, we get a first order frame field
satisfying \eqref{canonical}, as claimed.\end{proof}

If $\tilde A$ is a first order frame field satisfying \eqref{canonical}, then $\tilde A_1$ is a lift of $\gamma$,
such that
$$
  \tilde A_1\wedge\tilde A_1'\wedge \tilde A_1''= -(\tilde a_3^2 +i\tilde b_3^2)\tilde A_1\wedge\tilde A_2\wedge\tilde A_3.
    $$
Therefore, $\gamma(t_0)$ is a CR inflection point if and only if $\tilde a_3^2 |_{ t_0} = \tilde b_3^2|_{ t_ 0} = 0$.
The osculating curve of $\gamma$ is given by
$$
  \mathcal{C}_\gamma :  J \ni  t\mapsto \left[\tilde A_2 \big|_t\right]\in\Omega_+.
    $$
Then, $\tilde A_2$ is a lift of $\mathcal{C}_\gamma$ to $\C^{2,1}$, such that
$$
 \tilde A_2\wedge\tilde A_2'= i(\tilde a_3^2 +i\tilde b_3^2)\tilde A_1\wedge\tilde A_2.
  $$
From this and by Fact 2, it follows that $\gamma(t_0)$ is a CR inflection point if and only if $d\mathcal{C}_\gamma|_{t_0}=0$.
Next, let $t_0\in J$. Then
$$
   \beta :  J \ni t \mapsto \left[\tilde A_1\big|_{ t_ 0} + (t - { t_ 0})\tilde A_3\big|_{ t_ 0}\right]\in\S
     $$
is a parametric equation of the osculating chain of $\gamma$ at $\gamma (t_0)$ and
$$
  B :  J\ni t  \mapsto \tilde A_1\big|_{ t_ 0} + (t - { t_ 0})\tilde A_3\big|_{ t_ 0}\in\C^3
    $$
is a lift of $\beta$.

Now, suppose that $\gamma(t_0)$ is a CR inflection point, i.e., $\tilde a_3^2|_{ t_ 0} = \tilde b_3^2|_{ t_ 0}= 0$.
Then
$$
\begin{cases}
\tilde A_1 \big|_{ t_ 0}=B |_{ t_ 0},\\
\tilde A_1' \big|_{ t_ 0}= \tilde A_3\big|_{ t_ 0}=B' |_{ t_ 0},\\
\tilde A_1'' \big|_{ t_ 0}=\tilde a_3^1\tilde A_1\big|_{ t_ 0}=\tilde a_3^1 B|_{ t_ 0}.\\
\end{cases}
 $$
Hence, putting $\rho_0\neq 1$, $\rho_1=0$, $\rho_2=\tilde a_3^1|_{t_0}$, $h_1=1$ and $h_2=0$, the lifts
$\tilde A_1$ and $B$ of $\gamma$ and $\beta$ satisfy \eqref{canonical}.
This implies that $\beta$ and $\gamma$ have an analytic contact of order strictly bigger that 1 at $\gamma(t_0)$.

Suppose now that $\beta$ and $\gamma$ have an analytic contact of order strictly bigger that 1 at $\gamma(t_0)$.
By Fact 1, there exist complex numbers $\rho_0 \neq 0$, $\rho_1$, $\rho_2$, and real numbers $h_1 \neq 0$, $h_2$,
such that
$$
\begin{cases}
\tilde A_1 \big|_{ t_ 0}=\rho_0 B |_{ t_ 0},\\
\tilde A_1' \big|_{ t_ 0}= \rho_1 B \big|_{ t_ 0} + \rho_0 h_1 B' |_{ t_ 0},\\
\tilde A_1'' \big|_{ t_ 0}=\rho_2 B \big|_{ t_ 0} + (2\rho_1 h_1 + \rho_0 h_2)B' \big|_{ t_ 0}
+ \rho_0 h_1^2 B''\big|_{ t_ 0}.\\
\end{cases}
 $$
Now, by construction, we have
$$
\begin{cases}
\tilde A_1' \big|_{ t_ 0}=\tilde A_3 \big|_{ t_ 0},\\
\tilde A_1'' \big|_{ t_ 0}= \tilde a_3^1\tilde A_1\big|_{ t_ 0}+(\tilde a_3^2+i\tilde b_3^2)\tilde A_2\big|_{ t_ 0},\\
\end{cases}
\quad
\begin{cases}
B |_{ t_ 0}=\tilde A_1' \big|_{ t_ 0},\\
B' |_{ t_ 0}=\tilde A_3 \big|_{ t_ 0}.\\
B'' |_{ t_ 0}=0.\\
\end{cases}
  $$
Accordingly, it follows that $\tilde a_3^2|_{ t_ 0} = \tilde b_3^2|_{ t_ 0}= 0$, namely $\gamma(t_0)$
is a CR inflection point,
which concludes the proof of Proposition \ref{trans-nec}.
\end{proof}

\begin{defn}
A \emph{generic transversal knot} $\K\subset \S$ is the image of a periodic generic transversal
curve $\gamma : \R \to \S$, with minimal period $\omega$, such that the restriction of $\gamma$ to the
interval $[0,\omega)$ is one-to-one. Two generic transversal knots $\K$ and $\hat{\K}$ are
said to be \emph{CR isotopic} if there exists a smooth 1-parameter family $\{\K_t\}_{ t\in [0,1] }$
of generic transversal knots, i.e., a \emph{CR isotopy}, such that $\K_0 = \K$ and $\K_1 =\hat{\K}$ .
\end{defn}

\subsection{Local CR invariants for
transversal curves: the equivalence problem
}

From now on, we will consider generic transversal curves.

\begin{defn}
Let $\gamma$ be generic transversal curve. A lift $\Gamma$ of $\gamma$, such that
$$
   \det(\Gamma,\Gamma',\Gamma'')= -1,
     $$
is said to be a \emph{Wilczynski lift} (W-lift) of $\gamma$.
%
 If $\Gamma$ is a Wilczynski lift, any other is given by $\varepsilon\Gamma$, where $\varepsilon\in\C$
 is a cube root of the unity.
 The function
$$
 a_\gamma = i\langle\Gamma,\Gamma'\rangle^{-1}
 $$
is smooth, real-valued, and independent of the choice of $\Gamma$.
We call $a_\gamma$ the \emph{strain density} of the parameterized transversal curve $\gamma$.
The linear differential form $ds = a_\gamma dt$ is called the \emph{infinitesimal strain}.
\end{defn}

\begin{prop}\label{2.2.1}
The strain density and the infinitesimal strain are invariant under the action of the CR transformation group.
In addition, if $h : I \to J$ is a change of parameter, then the infinitesimal strains $ds$ and $d\tilde s$
of $\gamma$ and $\tilde \gamma=\gamma\circ h$,
respectively, are related by $ d\tilde s = h^*(ds)$.
\end{prop}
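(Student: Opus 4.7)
The plan is to verify the two invariance statements separately, and both reduce to short computations once the transformation law of the Wilczynski lift is written down.

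For invariance under the CR transformation group, I would take $B\in G=\mathrm{SU}(2,1)$ acting on $\S$ and consider $\hat\gamma=B\cdot\gamma$. If $\Gamma$ is a Wilczynski lift of $\gamma$, then $\hat\Gamma:=B\Gamma$ is visibly a lift of $\hat\gamma$. Since $\det B=1$, differentiation commutes with the linear action of $B$, yielding $\det(\hat\Gamma,\hat\Gamma',\hat\Gamma'')=\det(\Gamma,\Gamma',\Gamma'')=-1$, so $\hat\Gamma$ is still Wilczynski. Because $B$ preserves the Hermitian form \eqref{hp}, one also has $\langle\hat\Gamma,\hat\Gamma'\rangle=\langle\Gamma,\Gamma'\rangle$, and therefore $a_{\hat\gamma}=a_\gamma$ and $d\hat s=ds$.

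For reparametrization invariance, let $h:I\to J$ with $h'>0$ and $\tilde\gamma=\gamma\circ h$. Any lift of $\tilde\gamma$ has the form $\tilde\Gamma=r\cdot(\Gamma\circ h)$ for a nonvanishing function $r$. A direct computation with the chain and Leibniz rules gives
\begin{equation*}
(\tilde\Gamma,\tilde\Gamma',\tilde\Gamma'')=\bigl((\Gamma,\Gamma',\Gamma'')\circ h\bigr)\cdot
\begin{pmatrix} r & r' & r''\\ 0 & rh' & 2r'h'+rh''\\ 0 & 0 & r(h')^2\end{pmatrix},
\end{equation*}
whose triangular determinant equals $r^3(h')^3$. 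Imposing the Wilczynski normalization $\det(\tilde\Gamma,\tilde\Gamma',\tilde\Gamma'')=-1$ on the left and using $\det(\Gamma,\Gamma',\Gamma'')\circ h=-1$ forces $r^3(h')^3=1$; modulo the residual cube-root-of-unity ambiguity of the Wilczynski lift, the real positive choice $r=1/h'$ is singled out by positivity of the parametrization.

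Plugging this $r$ back in, I exploit the isotropy $\langle\Gamma,\Gamma\rangle\equiv 0$ (since $\gamma\subset\S$) to kill the cross term in
\begin{equation*}
\langle\tilde\Gamma,\tilde\Gamma'\rangle=|r|^2 h'\,\langle\Gamma,\Gamma'\rangle\circ h + \bar r r'\,\langle\Gamma,\Gamma\rangle\circ h=\frac{1}{h'}\,\langle\Gamma,\Gamma'\rangle\circ h.
\end{equation*}
Hence $a_{\tilde\gamma}(t)=h'(t)\,a_\gamma(h(t))$, and consequently $d\tilde s=a_{\tilde\gamma}\,dt=a_\gamma(h(t))\,h'(t)\,dt=h^*(ds)$, as claimed. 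The only mild subtlety to watch is the cube-root ambiguity in selecting $r$, which is handled by invoking positivity of $h'$ together with the fact that the strain density $a_\gamma=i\langle\Gamma,\Gamma'\rangle^{-1}$ was already defined as independent of that discrete choice.
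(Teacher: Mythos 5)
Your proof is correct and follows essentially the same route as the paper's: both arguments reduce the reparametrization statement to the fact that $\det(\Gamma\circ h,(\Gamma\circ h)',(\Gamma\circ h)'')=-(h')^3$, so the Wilczynski lift rescales by $(h')^{-1}$ and hence $a_{\tilde\gamma}=h'\,a_\gamma\circ h$, while the group-invariance part is the same unimodularity/isometry observation the paper leaves implicit. Your extra care with the cube-root ambiguity is harmless and correctly discharged by the lift-independence of $a_\gamma$.
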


\begin{proof}
If $A\in {G}$ and if $\Gamma$ is a Wilczynski lift of $\gamma$, then $\hat{\Gamma} = A\Gamma$ is a
Wilczynski lift of $\hat\gamma = A\gamma$. This implies that $a_\gamma = a_{\hat\gamma}$.
Next, consider a reparametrization $\tilde\gamma=\gamma\circ h$ of $\gamma$. Then, $\Gamma^*=\Gamma\circ h$
is a lift of $\tilde\gamma$, such that
$$
  \det\left(\Gamma^*,(\Gamma^*)' ,(\Gamma^*)'' \right) = -(h')^3.
   $$
This implies that $\tilde\Gamma=(h')^{ -1}\Gamma^*$ is a Wilczynski lift of $\tilde\gamma$.
Hence
$$
   \langle(\Gamma^*)',(\Gamma^*)'\rangle=(h')^{ -1}\langle\Gamma,\Gamma'\rangle\circ h.
    $$
Therefore, the strain densities of $\gamma$ and $\tilde\gamma$ are related by
$$
   a_{\tilde\gamma}=h'a_\gamma\circ h.
     $$
Consequently, we have
$$
    h^*(d s)= h'a_\gamma\circ h \ dt=d\tilde s.
     $$\end{proof}

As a straightforward consequence of Proposition \ref{2.2.1}, we have the following.

\begin{cor}
A generic transversal
curve $\gamma$ can be parametrized so that $a_\gamma = 1$.
\end{cor}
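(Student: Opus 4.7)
The plan is to exhibit the desired reparametrization explicitly by inverting the antiderivative of the strain density, in direct analogy with arc-length reparametrization in Euclidean geometry.

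First I would verify that the strain density is strictly positive. By the definition of $a_\gamma$ together with the characterization of positively oriented transversal curves given earlier (namely $i\langle\Gamma,\Gamma'\rangle > 0$ for every lift $\Gamma$), we have $a_\gamma(t) = i\langle\Gamma,\Gamma'\rangle^{-1} > 0$ for all $t \in J$. Fix a point $t_0$ in the domain $J$ of $\gamma$ and define
\begin{equation*}
   s(t) = \int_{t_0}^{t} a_\gamma(u)\, du.
\end{equation*}
Since $a_\gamma$ is smooth and strictly positive, $s : J \to I := s(J)$ is a smooth diffeomorphism with $s'(t) = a_\gamma(t) > 0$. Let $h = s^{-1} : I \to J$ denote its inverse, so that $h$ is a smooth, strictly increasing change of parameter and $h'(u) = 1/a_\gamma(h(u))$ for all $u \in I$.

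Now set $\tilde\gamma = \gamma \circ h$. By the transformation rule established in Proposition \ref{2.2.1},
\begin{equation*}
   a_{\tilde\gamma}(u) = h'(u)\, a_\gamma(h(u)) = \frac{1}{a_\gamma(h(u))} \cdot a_\gamma(h(u)) = 1,
\end{equation*}
so $\tilde\gamma$ is the required parametrization. There is no real obstacle here: the positivity of $a_\gamma$, which follows from the positive orientation convention adopted for transversal curves, is exactly what makes the antiderivative invertible. The uniqueness of such a parameter (up to an additive constant, corresponding to the choice of base point $t_0$) is an immediate byproduct of the same transformation rule.
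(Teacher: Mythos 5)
Your proof is correct and is exactly the argument the paper intends: the corollary is stated as a ``straightforward consequence'' of Proposition \ref{2.2.1}, whose transformation rule $a_{\tilde\gamma}=h'\,a_\gamma\circ h$ is applied to the inverse of the primitive of $a_\gamma$, precisely as you do. The only point worth making explicit, which you do, is the strict positivity of $a_\gamma$ coming from the standing convention that transversal curves are positively parametrized.
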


\begin{defn}
If  $a_\gamma = 1$, we say that $\gamma : J \to\S$ is a \emph{natural parametrization},
or a parametrization by the {\em pseudoconformal strain} or {\em pseudoconformal parameter}.
\end{defn}

In the following, we will use the symbol $s$ to denote the {\em natural parameter}.

\begin{defn}
Let $\gamma : J \to\S$ be a natural parametrization of a transversal curve and
$\Gamma: \R \to\mathcal{N}$ be a W-lift of $\gamma$. The \emph{pseudoconformal bending}
$\kappa$ and the \emph{pseudoconformal twist} $\tau$ of $\gamma$ are the smooth
real-valued functions defined, respectively, by
\begin{equation}
  \kappa:=\frac{1}{2}\langle\Gamma',\Gamma'\rangle
    \end{equation}
    and
\begin{equation}
   \tau:={\Im}\left(\langle\Gamma'',\Gamma'\rangle\right)+3\kappa^2.
    \end{equation}
\end{defn}

We can state the following.

\begin{prop}\label{2.2.3}
Let $\gamma : J \to\S$ be a natural parametrization of a generic transversal curve. Then there exists a
first order frame field
$\F = (F_1, F_2, F_3) : J \to {G}$ along $\gamma$,
such that $F_1$ is a W-lift and
\begin{equation}
\F^{-1}\F'=
\begin{pmatrix}
i\kappa & -i  & \tau\\
0 &-2i\kappa & 1 \\
1 & 0&i\kappa\\
 \end{pmatrix}.
 \end{equation}
 The frame field $\F$ is called a {\em Wilczynski frame}.
If $\F$ is a Wilczynski frame, any other is given by $\varepsilon\F$,
where $\varepsilon$ is a cube root of the unity.
\end{prop}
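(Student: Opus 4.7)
My plan is to construct the Wilczynski frame explicitly from a W-lift of $\gamma$, read off its Maurer--Cartan form, and then argue uniqueness. Since $\gamma$ is generic and parametrized by the pseudoconformal strain, a W-lift $\Gamma$ exists and is unique up to multiplication by a cube root of unity, with $\langle\Gamma,\Gamma\rangle = 0$, $\langle\Gamma,\Gamma'\rangle = i$, $\langle\Gamma',\Gamma'\rangle = 2\kappa$, and $\det(\Gamma,\Gamma',\Gamma'') = -1$. Setting $F_1 := \Gamma$, I would define
\[
F_3 := F_1' - i\kappa F_1,
\]
the shape being forced by the target value $i\kappa$ of the $(1,1)$-entry of $\F^{-1}\F'$. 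A short direct computation with the identities above gives $\langle F_1,F_3\rangle = i$ and $\langle F_3,F_3\rangle = 0$, so the complex plane $\mathrm{span}_{\mathbb C}(F_1,F_3)$ has Hermitian signature $(1,1)$ and its orthogonal complement is a positive-definite complex line. I would then take $F_2$ to be the unique unit vector in this complement for which $\det(F_1,F_2,F_3)=1$; this is possible because any unit vector $F_2^{(0)}$ in the complement produces a pseudo-unitary frame, whence $|\det(F_1,F_2^{(0)},F_3)|=1$, and the remaining phase ambiguity $F_2^{(0)}\mapsto e^{i\theta}F_2^{(0)}$ is fixed uniquely by unimodularity. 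This produces a well-defined lift $\F=(F_1,F_2,F_3)\colon J\to G$, and $\F$ is automatically of first order because the first column of $\F^{-1}\F'$ is $(i\kappa,0,1)^T$ by construction.

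Next I would verify the remaining entries of $\F^{-1}\F'$ column by column. The decisive step for column $3$ is to identify the $F_2$-coefficient of $F_1''$: expanding
\[
-1 = \det(\Gamma,\Gamma',\Gamma'') = \det(F_1,\, i\kappa F_1 + F_3,\, F_1'') = \det(F_1,F_3,F_1'')
\]
in the basis $(F_1,F_2,F_3)$ shows that this coefficient equals $1$. Differentiating $F_3 = F_1'-i\kappa F_1$ and substituting the expansion $F_1'' = a F_1 + F_2 + 2i\kappa F_3$ (where the $F_3$-coefficient comes from pairing with $F_1$ and using $\langle F_1,F_1''\rangle=-2\kappa$) then yields directly the $F_2$- and $F_3$-coefficients of $F_3'$, namely $1$ and $i\kappa$. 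The $F_1$-coefficient of $F_3'$ is computed by pairing $F_3'$ with $F_3$; the identity $2\Re\langle F_1'',F_1'\rangle = 2\kappa'$, obtained by differentiating $\langle F_1',F_1'\rangle = 2\kappa$, then turns the resulting expression into the real scalar $\Im\langle F_1'',F_1'\rangle + 3\kappa^2 = \tau$.

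Column $2$ is forced by the fact that $\F^{-1}\F'$ takes values in $\mathfrak g$. Differentiating $\langle F_2,F_1\rangle=0$ together with $\langle F_2,F_1'\rangle = 0$ gives the $F_3$-coefficient of $F_2'$ as $0$; differentiating $\det(F_1,F_2,F_3)=1$ and using column $3$ determines the $F_2$-coefficient of $F_2'$ as $-2i\kappa$; finally, pairing $\langle F_3,F_2'\rangle$ with the identity $\langle F_3,F_2'\rangle = -\langle F_3',F_2\rangle = -1$ (read off from column $3$) forces the $F_1$-coefficient of $F_2'$ to equal $-i$.

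For uniqueness, if $\tilde\F$ is another Wilczynski frame then $\tilde F_1$ is a W-lift, hence $\tilde F_1 = \varepsilon F_1$ for some cube root of unity $\varepsilon$. Running the construction with $\tilde F_1$ gives $\tilde F_3 = \varepsilon F_3$, and then the determinant normalization (using $\varepsilon^{-2}=\varepsilon$) produces $\tilde F_2 = \varepsilon F_2$, so $\tilde \F = \varepsilon \F$. I expect the main bookkeeping hazard in executing this plan to be the careful tracking of conjugate linearity in the first argument of the Hermitian form when verifying that the $F_1$-coefficient of $F_3'$ collapses to the real scalar $\tau$.
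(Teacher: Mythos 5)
Your proposal is correct and follows essentially the same route as the paper: set $F_1=\Gamma$, $F_3=\Gamma'-i\kappa\Gamma$, complete to a unimodular light-cone basis by a spacelike $F_2$ in the orthogonal complement, read off the columns of $\F^{-1}\F'$ from the determinant normalization, the Hermitian pairings, and the derivative of $\det(F_1,F_2,F_3)=1$, and deduce uniqueness from the $\mathbb Z_3$ ambiguity of the W-lift. The only cosmetic difference is that the paper records the explicit formula $F_2=\Gamma''-2i\kappa\Gamma'-(\tau+\kappa^2+i\kappa')\Gamma$, whereas you characterize $F_2$ abstractly and extract the same coefficients by pairing.
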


\begin{proof}
First, we construct the the Wilczynski frame.
Let $\Gamma$ be a W-lift of $\gamma$ and set
\begin{equation}\label{F1}
 F_1 = \Gamma,
 \end{equation}
 \begin{equation}\label{F3}
    F_3 = \Gamma' - i\kappa\Gamma.
      \end{equation}
 Then, $\langle F_1, F_1\rangle = \langle F_3, F_3\rangle = 0 \text{ and } \langle F_1, F_3\rangle = i$.
 Consequently, there exists a spacelike vector field $F_2 : J\to \C^{2,1}$, such that $(F_1, F_2, F_3)|_s$
 is a (unimodular) light-cone basis of $\C^{2,1}$, for every $s\in J$. We claim that $\F = (F_1, F_2, F_3)$
 is a Wilczynski frame. To this end, we prove that
\begin{equation}\label{F2}
F_2 = \Gamma'' - 2i\kappa\Gamma' - (\tau + \kappa^2 + i\kappa')\Gamma.
  \end{equation}
Let us write
$$
  \Gamma'' = aF_1 + bF_2 + cF_3.
    $$
Since $\det(\Gamma,\Gamma',\Gamma'')=-1$, $b=1$. Moreover, since $\langle\Gamma,\Gamma''\rangle = -2\kappa$,
 $c=2i\kappa$. Then,
$$
  \Gamma''=(a + 2\kappa^2)\Gamma+ F_2 + 2 i\kappa\Gamma'.
    $$
Since $\langle\Gamma'',\Gamma'\rangle=\kappa' + i(\tau - 3\kappa^2)$, taking into account
that $\langle\Gamma,\Gamma'\rangle=i$ and $\langle\Gamma',\Gamma'\rangle=2\kappa$, we have
$a = \tau - \kappa^2 + i\kappa'$, and
hence \eqref{F2}.

By construction, we have
\begin{equation}\label{F1'}
\Gamma' = F_3+i\kappa \Gamma.
  \end{equation}
Differentiating \eqref{F1'}, using \eqref{F2}, \eqref{F3} and \eqref{F1}, yields
\begin{equation}\label{F3'}
  F_3' =\tau  F_1+F_2+i\kappa F_3.
    \end{equation}
Let
$$
  F_2'=uF_1+vF_2+wF_3.
    $$
    Then, using \eqref{F1'} and \eqref{F3'}, we obtain
$$
\begin{cases}
0=\langle  F_3+i\kappa F_1,F_2\rangle=\langle  F_1',F_2\rangle=-\langle  F_1,F_2'\rangle=-iw\\
1=\langle  \tau  F_1+F_2+i\kappa F_3,F_2\rangle=\langle  F_3',F_2\rangle=- \langle  F_3,F_2'\rangle=iu\\
0=\det(F_1',F_2,F_3)+\det(F_1,F_2',F_3)+\det(F_1,F_2,F_3')=2i\kappa+v
 \end{cases}
  $$
 Therefore,
 \begin{equation}\label{F2'}
   F_2' =-i  F_1+-2i\kappa F_2.
   \end{equation}
 Equations \eqref{F1'}, \eqref{F3'} and \eqref{F2'} imply that $\F$ is a Wilczynski frame field along $\gamma$.

 Let $\tilde\F= (\tilde F_1,\tilde  F_2, \tilde F_3)$ be any other Wilczynski frame field along $\gamma$.
 Since $F_1$ and $\tilde F_1$ are W-lifts of $\gamma$, then $\tilde F_1 = \varepsilon F_1$,
 where $\varepsilon$ is a cube root of the unity. This implies
 $$
   \tilde F_3 = \tilde F_1' - i\kappa\tilde F_1 = \varepsilon(F_1' - i\kappa F_1 ) = \varepsilon F_3.
     $$
Taking into account that $\tilde \F$ and $\F$ are both unimodular light-cone basis of $\C^{2,1}$,
$\tilde F_1 = \varepsilon F_1$ and $\tilde F_3 = \varepsilon F_3$ {imply} that $\tilde F_2 = \varepsilon F_2$.
This concludes the proof.\end{proof}

\begin{remark}
Let $\K\subset\S$ be an embedded generic transversal curve and $\gamma : J \to\K$ be a natural
parametrization of $\K$ with bending $\kappa$ and twist $\tau$.
Then $\mathfrak{k} = \kappa\circ\gamma^{ -1}$ and $\t = \tau\circ\gamma^{ -1}$ do not
depend on the choice of $\gamma$. Thus, the bending and the twist can be viewed as
smooth real-valued functions defined on $\K$.
\end{remark}

As a consequence of Proposition \ref{2.2.3}, we have the following.

\begin{thm}\label{2.2.4}
Let $\K$, $\tilde\K\subset\S$ be two embedded generic transversal curves. Then, $\K$ and $\tilde \K$
are congruent to each other if and only if there exists an orientation-preserving diffeomorphism
$h : \K \to\tilde \K$,
such that $\tilde\k\circ h=\k$ and $\tilde\t\circ h=\t$.
In addition, given two smooth functions $\kappa$, $\tau:J\to\R$, there exists a generic transversal
curve $\gamma:J\to\S$, parameterized by the natural parameter, with bending $\kappa$ and twist $\tau$.
\end{thm}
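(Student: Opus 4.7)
The plan is to apply the moving-frame method, reducing the geometric problem to a linear matrix ODE on the Lie group $G$ via the Wilczynski frame of Proposition \ref{2.2.3}. Along any natural-parametrized generic transversal curve $\gamma$, the Wilczynski frame $\F:J\to G$ satisfies the left-invariant equation
\[
\F^{-1}\F'=X(\kappa,\tau)\in\g,\qquad
X(\kappa,\tau)=\begin{pmatrix} i\kappa & -i & \tau\\ 0 & -2i\kappa & 1\\ 1 & 0 & i\kappa\end{pmatrix},
\]
so the CR geometry of $\gamma$ is encoded entirely in the pair $(\kappa,\tau)$, up to the choice of initial frame.

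For the \emph{only if} part of the congruence assertion, I would argue that if $\Phi\in G$ satisfies $\Phi(\K)=\tilde\K$, then, since $\det\Phi=1$ and $\Phi$ preserves the pseudohermitian form, $\Phi$ maps a W-lift of a natural parametrization $\gamma$ of $\K$ to a W-lift of $\Phi\circ\gamma$. Consequently $\Phi\circ\gamma$ is also natural, with the same bending and twist, so $h:=\Phi|_\K$ is an orientation-preserving diffeomorphism satisfying $\tilde\k\circ h=\k$ and $\tilde\t\circ h=\t$. For the \emph{if} direction, I would take a natural parametrization $\gamma:J\to\K$ and set $\delta:=h\circ\gamma:J\to\tilde\K$; the hypothesis yields $\tilde\k\circ\delta=\kappa$ and $\tilde\t\circ\delta=\tau$, with $(\kappa,\tau)$ the bending and twist of $\gamma$. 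After showing that $h$ preserves the infinitesimal strain (the main obstacle, discussed below) so that $\delta$ is natural, one has Wilczynski frames $\F,\tilde\F$ along $\gamma,\delta$ satisfying the same ODE $\F^{-1}\F'=\tilde\F^{-1}\tilde\F'=X(\kappa,\tau)$. Fixing $s_0\in J$ and setting $A:=\tilde\F(s_0)\F(s_0)^{-1}\in G$, the uniqueness theorem for linear matrix ODEs yields $\tilde\F=A\F$ on $J$; projecting first columns gives $\delta=A\gamma$, so $\K$ and $\tilde\K$ are CR congruent via $A$.

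For the existence assertion, given $\kappa,\tau\in C^\infty(J,\R)$ I would form the smooth $\g$-valued function $s\mapsto X(\kappa(s),\tau(s))$ and solve the linear matrix ODE $\F'=\F\cdot X(\kappa,\tau)$ with any initial condition $\F(s_0)\in G$. Linearity and the fact that $X$ takes values in $\g$ guarantee a unique smooth solution $\F:J\to G$ defined on the whole of $J$. Setting $\gamma(s):=[F_1(s)]\in\S$ with $\F=(F_1,F_2,F_3)$, the structure relations read off the ODE yield $F_1'=F_3+i\kappa F_1$, $F_3'=\tau F_1+F_2+i\kappa F_3$, and $F_2'=-iF_1-2i\kappa F_2$; from these, together with the light-cone orthogonality $\langle F_i,F_j\rangle=h_{ij}$, one checks $\det(F_1,F_1',F_1'')=-1$, $\langle F_1,F_1'\rangle=i$, $\tfrac12\langle F_1',F_1'\rangle=\kappa$, and $\Im\langle F_1'',F_1'\rangle+3\kappa^2=\tau$. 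Hence $\gamma$ is a natural-parametrized generic transversal curve with W-lift $F_1$, bending $\kappa$, and twist $\tau$.

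The main obstacle is the verification, in the \emph{if} direction, that $h$ preserves the natural parameter. Since $\k$ and $\t$ are defined via derivatives of a W-lift with respect to the natural parameter, and together with the W-lift determine the full Wilczynski frame up to a cube root of unity, a diffeomorphism $h$ matching $(\k,\t)$ with $(\tilde\k,\tilde\t)$ pointwise must identify the infinitesimal strains $ds$ and $d\tilde s$, so $\delta=h\circ\gamma$ is natural up to a constant shift of parameter. Once this is granted, the uniqueness of solutions to the linear matrix ODE closes the proof without further difficulty.
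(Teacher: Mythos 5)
Your proposal follows the same route as the paper's proof: the ``only if'' direction by transporting a W-lift with an element of $G$ (unimodularity and invariance of the Hermitian form give that $A\Gamma$ is again a W-lift and that $A\gamma$ is again natural), the ``if'' direction by the Cartan--Darboux rigidity theorem applied to the Wilczynski frames, and the existence statement by integrating the linear system with coefficient matrix $X(\kappa,\tau)\in\g$ and observing that ${}^t\bar{\F}\,\mathbf{h}\,\F$ and $\det\F$ are first integrals, so the solution stays in $G$. (Your right-multiplication form $\F'=\F X$ is the one consistent with Proposition \ref{2.2.3}.)

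The one step that is not sound is precisely the one you single out as the ``main obstacle.'' You claim that a diffeomorphism $h$ matching $(\k,\t)$ with $(\tilde\k,\tilde\t)$ pointwise ``must identify the infinitesimal strains $ds$ and $d\tilde s$.'' This does not follow from the mere fact that $\k$ and $\t$ are defined through the natural parameter. For an isoparametric curve $\k$ and $\t$ are constant, so \emph{every} orientation-preserving diffeomorphism between two congruent isoparametric arcs matches them, while almost none of these diffeomorphisms preserve the strain element; worse, taking $\kappa(s)=s$, $\tau=0$ on $(0,1)$ and $\tilde\kappa(u)=2u$, $\tilde\tau=0$ on $(0,1/2)$ (both realizable by the existence part), the diffeomorphism induced by $s\mapsto s/2$ matches bending and twist although the two arcs are not congruent. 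So the implication you need genuinely requires either an additional hypothesis (that $h$ preserve the infinitesimal strain) or a different argument; your heuristic does not supply it. To be fair, the paper's own proof asserts without justification that composing a natural parametrization of $\tilde\K$ with $h$ yields a natural parametrization of $\K$, i.e., it tacitly makes the same assumption. Your write-up at least makes the hidden step explicit, but the justification offered for it is not a proof.
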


\begin{proof}
Suppose that $\K$ and $\tilde \K$ are congruent to each other. Then there exists $A\in {G}$,
such that $\tilde \K= A\K$. Denote by $h_A : \K\to \tilde\K$ the diffeomorphism induced by $A$ and let $\gamma$
be a natural parametrization of $\K$. Then, $\tilde\gamma = A\gamma$ is a natural parametrization
of $\tilde\K$, such that $\tilde\kappa=\kappa$ and $\tilde\tau = \tau$.
Since $\tilde\gamma=h_A\circ \gamma$, we have
$$
  \tilde\k=\kappa\circ\tilde\gamma^{-1}=\kappa\circ\gamma^{-1}\circ (h_A)^{-1}=\k\circ (h_A)^{-1}
    $$
and
$$
  \tilde\t=\tau\circ\tilde\gamma^{-1}=\tau\circ\gamma^{-1}\circ (h_A)^{-1}=\t\circ (h_A)^{-1}.
     $$

Conversely, suppose that $h:\K\to\tilde\K$ is an orientation-preserving diffeomorphism,
such that $\tilde\k\circ h=\k$ and $\tilde\t\circ h=\t$. Let $\tilde\gamma$ be a natural
parametrization of $\tilde\K$. Then $\gamma=\tilde\gamma\circ h$ is a natural
parametrization of $\K$. Since $\tilde\k\circ h=\k$ and $\tilde\t\circ h=\t$, the parameterizations
$\gamma$ and $\tilde\gamma$ have the same bending and twist.
Let $\F$ and $\tilde\F$ be two W-frame fields along $\gamma$ and $\tilde\gamma$, respectively.
Then, $\F^{ -1}\F' = \tilde\F^{-1}\tilde\F'$. Consequently, by the Cartan--Darboux rigidity theorem,
there exists $A\in {G}$ such that $\tilde\F= A\F$. This implies that $\tilde\gamma= A\gamma$, and hence
$\tilde\K= A\K$.

The second part of the theorem follows from the global existence theorem
for linear systems of first-order ODEs. In fact, given $\kappa$, $\tau:J\to\R$, consider the
$\mathfrak{g}$-valued smooth function given by
$$
K=
\begin{pmatrix}
i\kappa&-i&\tau\\
0& -2i\kappa&1\\
1&0&i\kappa
\end{pmatrix}.
$$
Consider the linear system of first-order ODEs
\[
\begin{pmatrix}
\rho'_1\\\rho'_2\\\rho'_3
\end{pmatrix}
= K
\begin{pmatrix}
\rho_1\\\rho_2\\\rho_3
\end{pmatrix},
\]
where ${^t\!\rho_1}(s)$, ${^t\!\rho_2}(s)$, ${^t\!\rho_3}(s)\in \C^3$, $s\in J$,
are unknown vectors of $\C^3$.
Let $R_1(s)$, $R_2(s)$, $R_3(s)$ be the solution of the system, satisfying the initial
conditions
$R_1(s_0)= (1,0,0)$, $R_2(s_0)= (0,1,0)$, $R_3(s_0)=(0,0,1)$, for $s_0\in  J$,
Let $\F : J \to \C(3,3)$ denote the matrix-valued smooth function with row vectors
$R_1$, $R_2$ and $R_3$. Then $\F$ is a solution of the Cauchy problem
\begin{equation}\label{cauchy}
\begin{cases}
\F'=K \F\\
\F(s_0)=I_{3},
  \end{cases}
    \end{equation}
    where $I_3$ is the $3\times 3$ identity matrix.
Accordingly, since $K$ is $\mathfrak{g}$-valued (cf. \eqref{def-Lie-algebra}), it follows that
${^t\!\overline\F} \mathbf h\F$
and $\det(\F)$ are constant, which
implies that  $\F$ is $G$-valued (cf. \eqref{def-Lie-group}). Let $F_1$ be the first column vector of $\F$ and
$\gamma: J \to \S$ be defined by $\gamma(s) = [F_1(s)]$, for every $s\in J$.
Then, \eqref{cauchy} implies that  $\gamma$ is a generic transversal curve, such that
$\F$ is one of its Wilczynski frame fields and that $\kappa$ and $\tau$ are, respectively,
the bending and the twist of $\gamma$.
\end{proof}

\begin{remark}
Let $\gamma : J \to\S $ be a natural parametrization of a generic transversal curve and let
$\Gamma$ be a W-lift of $\gamma$.
Then,
$$
 \eta:  J \ni s\mapsto [\Gamma(s)]_\R\in\E
   $$
is an immersed curve in the Einstein static universe, the \emph{Fefferman lift} of $\gamma$.
It is a computational matter to check that
$$
  \kappa=\frac{{^t\overline{\Gamma}(s)}\Gamma(s)}{2}g_\E(\eta,\eta').
    $$
Therefore, the bending can be viewed as a measure of how much the Fefferman lift of $\gamma$
differs from being a lightlike curve.
\end{remark}

\begin{remark}
Let $\gamma : J \to\S $ be as above and $\F = (F_1, F_2, F_3) : J \to {G}$ be a Wilczynski frame field
along $\gamma$. Then,
$$
  \gamma^\# :  J \ni s\mapsto [F_3(s)]_\C\in\S
     $$
is an immersed curve, called the \emph{dual} of $\gamma$.
The dual curve is Legendrian (i.e., tangent to the contact distribution)
if and only if $\tau = 0$. Consequently, the twist can be viewed as a measure of how the dual curve differs
from being a Legendrian curve.
\end{remark}

Consider the CR prolongation $\Lambda : {G} \to P(\S)$ of the structure bundle of $\S$.
Let $\gamma: J \to\S$ be a natural parametrization
of a generic transversal curve and $\F = J \to {G}$ be a Wilczynski frame field along $\gamma$.
Then, ${\bf F} = ({\bf F_1},{\bf F_2},{\bf F_3}) =\Lambda\circ\F : J \to P(\S)$ does
not depend on the choice of the Wilczynski frame field.

\begin{defn}
We call ${\bf F}$ the \emph{CR moving trihedron} along $\gamma$. By construction, ${\bf F_1}|_t$ is
tangent to $\gamma$ at $\gamma(t)$ and ${\bf F_2}|_t$, ${\bf F_3}|_t$ belong to the contact plane
$\D|_{\gamma(t)}$. In analogy with the elementary
differential geometry of space curves, we call ${\bf F_1}|_t$ and ${\bf F_2}|_t$ the \emph{CR tangent vector field}
and the \emph{CR normal
vector field} of $\gamma$ at $\gamma(t)$. They will be denoted by $\vec T$
and $\vec N$, respectively. Observe that ${\bf F_3}=J(\vec N)$.
\end{defn}

\begin{figure}[h]
\begin{center}
\includegraphics[height=6.2cm,width=6.2cm]{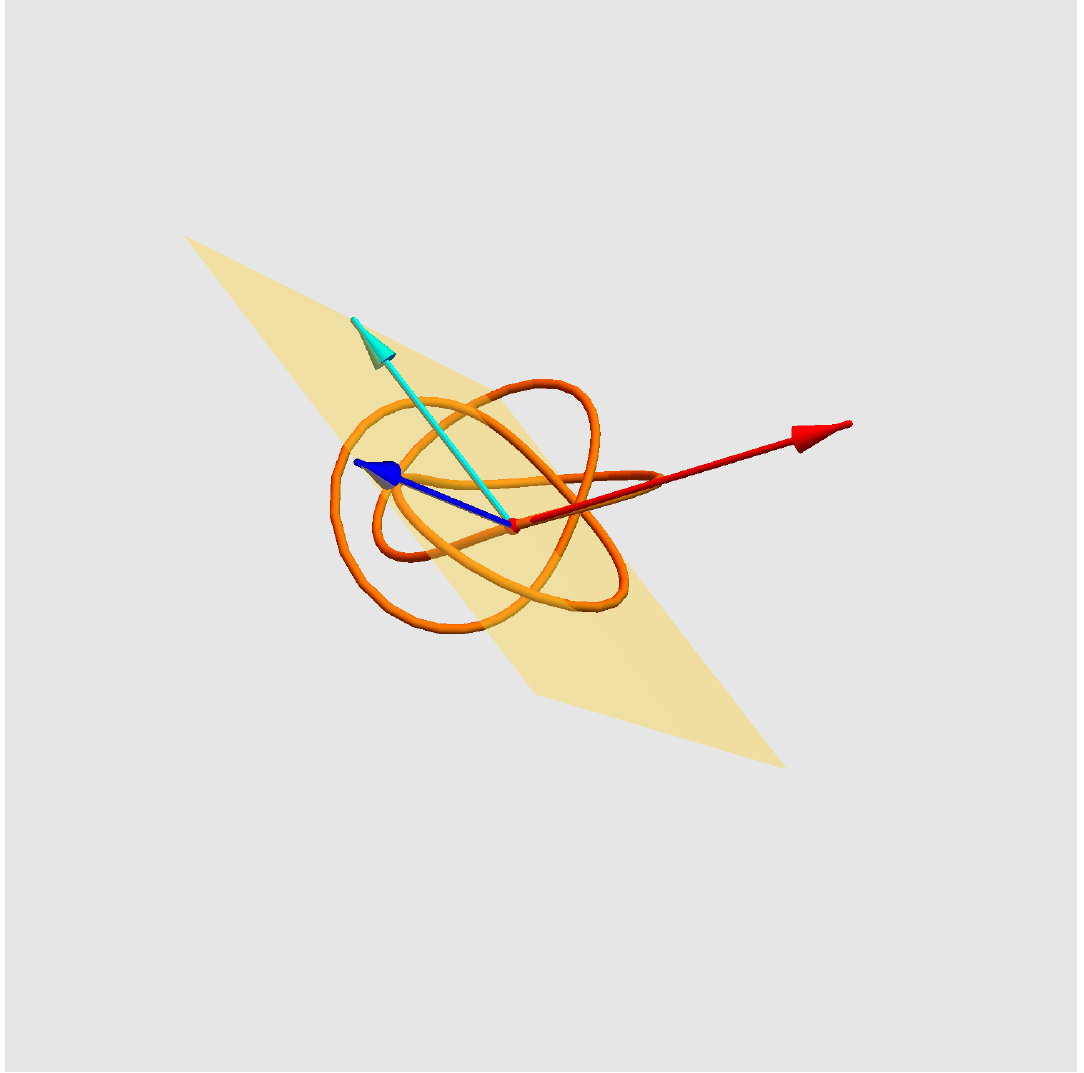}
\caption{\small{CR trihedron along a generic transversal torus knot of type $(3,4)$ and the contact plane;
the cyan vector is the CR normal vector.}}\label{FIG4}
\end{center}
\end{figure}

\begin{remark}\label{2.2.5}
Let $\gamma$ be a generic transversal curve whose trajectory is contained in
$\dot\S:=\S\setminus\{P_\infty\} \cong\R^3$.
Suppose the analytic expression of a W-lift $\Gamma: J \to \mathcal{N}$ of $\gamma$ is known.
Then, using
Remark \ref{1.1.5.1}, the CR trihedron can be explicitly constructed as follows. Firstly,
we compute the
Wilczynski frame along $\gamma$ by the formula
$$
  \F= (\Gamma , \Gamma''- 2i\kappa\Gamma' - (\tau+\kappa^2 + i\kappa')\Gamma, \Gamma' - i\kappa\Gamma) : J \to{G},
    $$
where $\kappa=\langle\Gamma',\Gamma'\rangle/2$ and $\tau={\Im}(\langle\Gamma'',\Gamma'\rangle)+3\kappa^2$
are, respectively, the bending and the twist of $\gamma$. Secondly, we compute the map
${\bf S} : J\to {G}$
defined by
$$
{\bf S}=
\begin{pmatrix}
1 & 0  & 0\\
\frac{\Gamma_2}{\Gamma_1}&1 &0 \\
\frac{\Gamma_3}{\Gamma_1} & i\frac{\overline\Gamma_2}{\overline\Gamma_1}&1\\
\end{pmatrix}.
 $$
Then, the CR trihedron is given by
$${\bf F}=
\begin{pmatrix}
0 & 1  & 0\\
0&0 &1 \\
1 & y(s)&-x(s)\\
\end{pmatrix}
\varphi(J\,{^t\!{\bar\F}} J{\bf S}),
$$
where $\varphi : {G} \to K$ is the 3-dimensional representation defined by
\eqref{repr} and $x(s)$, $y(s)$ are the real and imaginary parts of the
complex-valued function ${\Gamma_2}(s)/{\Gamma_1}(s)$.
\end{remark}


\subsection{The Cartan system of prolongations of generic transversal curves}\label{2.3}

\begin{defn}
Let $\gamma$ be a generic transversal curve parametrized by the natural parameter (strain)
and let $\F$ be a Wilczynski frame field along $\gamma$.
Let $[\F] : \R\to [{G}]$ be the unique lift originated by $\F$.
The map
$$
 \mathfrak{F} : \R\ni s\mapsto \big( [\F(s)], \kappa(s), \tau(s)\big)\in [G]\times\R^2
  $$
is called the \emph{prolongation} of $\gamma$. The Cartesian product $Y := [G]\times\R^2$
is referred to as the \emph{configuration space}. The coordinates on $\R^2$ will be denoted
by $(\kappa, \tau)$.
\end{defn}

With some abuse of notation, we use $\alpha_1^1$, $\beta_1^1$, $\alpha_1^2$, $\beta_1^2$, $\alpha_1^3$,
$\alpha_3^2$, $\beta_3^2$, $\alpha_3^1$ to denote the entries of the Maurer--Cartan form of $ [{G}]$
and their pull-backs on the configuration space $Y$.
By Proposition \ref{2.2.3}, the prolongations are the integral curves of the Pfaffian differential
system $(\mathbb{J}, \eta)$ on $Y$ generated by the 1-forms
$$
  \mu^1=\alpha_1^2,\quad \mu^2=\beta_1^2 ,\quad \mu^3= \alpha_3^2- \alpha_3^1,
    $$
$$
   \mu^4=\beta_3^2 ,\quad \mu^5=\alpha_1^1 ,\quad \mu^6=\beta_1^1-\kappa \alpha_1^3,\quad \mu^7 =\alpha_3^1-\tau\alpha_1^3,
    $$
with the independence condition $\eta =\alpha_1^3$. If we put
$$
  \pi^1=d\kappa,\quad \pi^2=d\tau,
     $$
the 1-forms $(\eta,\mu^1,\dots,\mu^7,\pi^1,\pi^2)$ define an absolute parallelism on $Y$.

\begin{remark}
By construction, the integral curves of $(\mathbb{J},\eta)$ are the prolongations of generic transversal curves of $\S$.
{The set of the closed integral curves} of the differential system is denoted by $\mathcal{V}(\mathbb{J},\eta)$.
\end{remark}

The Maurer--Cartan equations of ${G}$ imply that the coframe $(\eta,\mu^1,\mathellipsis,\mu^7,\pi^1,\pi^2)$
satisfies the following structure equations:
$$
\begin{cases}
d\eta=2\mu^1\wedge\mu^2+2\mu^5\wedge\eta,\\
d\pi^1=d\pi^2=0,\\
\end{cases}
$$
$$
\begin{cases}
d\mu^1=-\mu^1\wedge\mu^5+3\mu^2\wedge\mu^6+(3\kappa\mu^2-\mu^3)\wedge\eta,	\\
d\mu^2=-3\mu^1\wedge\mu^6 - \mu^2\wedge\mu^3 - (3\kappa\mu^1+ \mu^4)\wedge\eta,	\\
d\mu^3=-2\mu^\wedge\mu^2-\mu^1\wedge\mu^7+\mu^3\wedge\mu^5+3\mu^4\wedge\mu^6 - (\tau\mu^1- 3\kappa\mu^4+3\mu^5)\wedge\eta,	\\
d\mu^4=-\mu^2\wedge\mu^7- 3\mu^3\wedge\mu^6+\mu^4\wedge\mu^5 - (\tau\mu^2+ 3\kappa\mu^3-3\mu^6)\wedge\eta,	\\
d\mu^5=-\mu^1\wedge\mu^4+\mu^2\wedge\mu^3 + (\mu^2-\mu^7)\wedge\eta,	\\
d\mu^6=-2\kappa\mu^1\wedge\mu^2-\mu^1\wedge\mu^3-\mu^2\wedge\mu^4 - (\mu^1+ 2\kappa\mu^5)\wedge\eta - \pi^1\wedge\eta,	\\
d\mu^7=-2\tau\mu^1\wedge\mu^2-2\mu^3\wedge\mu^4-2\mu^5\wedge\mu^7 + (2\mu^4- 2\tau\mu^5)\wedge\eta - \pi^2\wedge\eta.	\\
\end{cases}
$$

\begin{remark}
It follows from this that the derived flag of $\mathbb{J}$ is given by
$\mathbb{J}_{(4)}  \subset \mathbb{J}_{ (3) } \subset \mathbb{J}_{ (2)}  \subset \mathbb{J}_{ (1)}$, where
$\mathbb{J}_{(4)} = \{0\}$, $\mathbb{J}_{ (3) }= \text{span}\{\mu^1\}$, $\mathbb{J}_{ (2)} = \text{span}\{\mu^1,\mu^2,\mu^3\}$,
$\mathbb{J}_{ (1)} = \text{span}\{\mu^1,\mu^2,\mu^3,\mu^4,\mu^5\}$.
Thus, all the derived systems have constant rank.
\end{remark}

 The 1-forms  $\mu^1,\dots,\mu^7$ and the independence condition $\eta$ generate an affine subbundle
 $\mathcal{Z}$ of $T^*(Y)$, namely
$$
  \mathcal{Z}=\eta+\text{span}\{\mu^1,\dots,\mu^7\}.
     $$

\begin{defn}\label{2.3.2}
Following \cite{Gr}, we call $\mathcal{Z}$ the \emph{phase space} of the Pfaffian differential system $\mathbb{J}$ with
independence condition $\eta$. The restriction to $\mathbb{J}$ of the Liouville form of $T^*(Y)$ is denoted by $\xi$. The exterior differential 2-form $\Xi = d\xi$ is said the Cartan--Poincar\'e form of $(\mathbb{J},\eta)$.
\end{defn}

With some abuse of notation, we use the same symbols to denote the exterior differential forms on $Y$ and
their pull-backs on $\mathcal{Z}$. Let $p_1,\dots, p_7$ be the fiber coordinates of the bundle map
$\mathcal{Z} \to Y$, with respect to the trivialization of $\mathcal{Z}$ determined by $\eta$ and the 1-forms
$\mu^1,\dots,\mu^7$. Then,
$$
   \xi=\eta+p_1\mu^1+\cdots +p_7\mu^7.
     $$
Using the structure equations, we get
\begin{equation*}
\begin{split}
  \Xi \equiv &\sum_{j=1}^7dp_j\wedge\mu^j + 2\mu^5 \wedge \eta + p_1(3\kappa\mu^2-\mu^3)\wedge\eta
  - p_2(3\kappa\mu^1+\mu^4)\wedge\eta \\
  &\quad- p_3(\tau\mu^1-3\kappa\mu^4+3\mu^5)\wedge\eta - p_4(\tau\mu^2+3\kappa\mu^3-3\mu^6)\wedge\eta\\
 &\quad+ p_5(\mu^2-\mu^7)\wedge\eta - p_6(\pi^1+\mu^1+ 2\kappa\mu^5)\wedge\eta - p_7(\pi^2-2\mu^4+4\tau\mu^5)\wedge\eta,
  \end{split}
\end{equation*}
where the sign `$\equiv$' denotes equality modulo the span of $\{ \mu^i\wedge\mu^ j\}_{ i,j = 1,\dots,7}$.

On $\mathcal{Z}$, consider the coframe $(\eta, \mu^1,\dots,\mu^7,\pi^1,\pi^2,dp_1,\dots,dp_7)$ and
the corresponding dual parallelization $(\partial_\eta,\partial_{ \mu^1},\dots,\partial_{\mu^7 },
\partial_{ \pi^1},\partial_{ \pi^2},\partial_{ p_1},\mathellipsis,\partial_{ p_7})$
of $T(\mathcal{Z})$. Contracting the 2-form $\Xi$ with the elements of this parallelization yields
\begin{equation}\label{system}\begin{cases}
\partial_{ p_j}{\lrcorner} \,\Xi\equiv\mu^j, j=1,\dots,7,\\
-\partial_{ \pi^1}\lrcorner \,\Xi\equiv  p_6\eta = :\dot\pi_1,\\
-\partial_{ \pi^2}\lrcorner\,\Xi\equiv  pi_7\eta =:\dot\pi_2,\\
-\partial_{ \mu^1}\lrcorner\,\Xi\equiv dp_1 + (3\kappa p_2+\tau p_3+p_6)\eta =: \dot\mu^1,\\
-\partial_{ \mu^2}\lrcorner\,\Xi\equiv dp_2 - (3\kappa p_1-\tau p_4+p_5)\eta =: \dot\mu^2,\\
-\partial_{ \mu^3}\lrcorner\,\Xi\equiv dp_3 + (p_1+3\kappa p_4)\eta =: \dot\mu^3,\\
-\partial_{ \mu^4}\lrcorner\,\Xi\equiv dp_4 + (p_2-3\kappa p_3-2p_7)\eta =:\dot\mu^4,\\
-\partial_{ \mu^5}\lrcorner\,\Xi\equiv dp_5 - (2 - 3p_3- 2\kappa p_6 + 4\tau p_7)\eta=: \dot\mu^5,\\
-\partial_{ \mu^6}\lrcorner\,\Xi\equiv dp_6-3p_4\eta = : \dot\mu^6,\\
-\partial_{ \mu^7}\lrcorner\,\Xi\equiv dp_7 + 3p_5\eta =: \dot\mu^7,\\
-\partial_\eta\lrcorner\,\Xi\equiv \pi_6\pi^1+p_7\pi^2 =:\dot\eta.
\end{cases}\end{equation}
The \emph{Cartan system} associated to the closed 2-form $\Xi$ is the Pfaffian differential system
 $C(\Xi)$ generated by the set of 1-forms
$\{\mu_1,\dots,\mu_7,\dot\pi_1,\dot\pi_2,\dot\mu_1,\dots,\dot\mu_7,\dot\eta\}$.

\subsection{Global invariants of a generic transversal knot}

The main classical invariant of a transversal knot of $\R^3$ is the \emph{Bennequin number},
which is defined as follows.
Consider a nowhere vanishing cross section $\xi$ of the contact distribution $\D$.
If $\K$ is a transversal knot parametrized by a periodic transversal curve $\gamma : \R\to \R^3$
and if $\epsilon <<1$ is a sufficiently small positive real number, then the map
$$
  \gamma_\epsilon : \R\ni t \mapsto \gamma(t) + \epsilon\xi |_{\gamma(t)} \in \R^3
    $$
parametrizes a transversal knot $\K_\epsilon$, disjoint from $\K$, called the \emph{contact push-off} of $\K$
in the direction of $\xi$.
The linking number Lk$(\K,\K_\epsilon)$ is independent of the choice of $\xi$ and is
invariant by contact isotopies.

\begin{defn}
The \emph{Bennequin number} of a transversal knot $\K$ is
the integer given by the linking number Lk$(\K,\K_\epsilon)$. The Bennequin number is denoted by $\beta(\K)$.

\end{defn}

\begin{remark}
{The Bennequin number played} an important role in proving that certain contact structures of $\R^3$
were not equivalent to the standard one.
In 1997, Fuchs and Tabachnikov \cite{FuTa1997} conjectured that two transversal
knots of $\R^3$ with the same knot type and the same Bennequin number are contact isotopic. To our knowledge, the conjecture is still open. In 2002, a similar conjecture for Legendrian knots was disproved by Chekanov \cite{Chek2002}.
In 1993, Eliashberg proved the following.

\begin{thm}[\cite{Eliash1993}]
Let $\K$ and $\hat\K$ be two topologically trivial transversal knots in $\R^3$ (or more generally in
any tight contact 3-manifold). If $\K$ and $\hat\K$ have the same Bennequin number, they are contact isotopic.
\end{thm}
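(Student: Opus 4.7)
The plan is to follow Eliashberg's classical approach via characteristic foliations on Seifert discs. Since $\K$ and $\hat\K$ are topologically trivial, each bounds an embedded smooth disc; let $D$ and $\hat D$ denote such Seifert discs, put in general position with respect to the contact distribution $\xi$. The first step is to analyze the singular foliation $\F_D = TD\cap \xi$ on $D$, whose singularities arise at the isolated points where $TD=\xi$. By the classical Bennequin formula,
\begin{equation*}
\beta(\K) \;=\; (e_- - h_-) - (e_+ - h_+),
\end{equation*}
where $e_{\pm}$ and $h_{\pm}$ denote the numbers of positive/negative elliptic and hyperbolic singularities of $\F_D$. The equality of Bennequin numbers therefore matches these weighted counts for $\K$ and $\hat\K$.

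The central step is to isotope $D$ and $\hat D$ so that $\F_D$ and $\F_{\hat D}$ agree with a common normal form depending only on $\beta(\K)$. This is achieved by iterated applications of Giroux's elimination lemma, which cancels an adjacent elliptic--hyperbolic pair of the same sign joined by a separatrix, provided the cancellation does not introduce an overtwisted disc. Tightness is essential here: it rules out limit cycles of the characteristic foliation (after Eliashberg--Gromov) and prevents overtwisted discs from appearing during the reductions. After exhausting the permissible cancellations, one reaches a universal model whose combinatorics are fixed by $\beta(\K)$; for a transversal unknot the model is a radial-like foliation with a single positive elliptic centre encircled by a controlled arrangement of negative singularities.

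Once $\F_D$ and $\F_{\hat D}$ coincide with the same model, a relative version of Gray's stability theorem (equivalently, the standard neighbourhood theorem for characteristically foliated discs) promotes the induced diffeomorphism $D\to \hat D$ to an ambient contact isotopy of the tight 3-manifold. Applying this isotopy carries $\hat\K$ to $\K$ through transversal knots, establishing contact isotopy. The main obstacle is the standardization step: carrying out the Giroux cancellations globally while preserving tightness, and showing that the end result is uniquely determined by the Bennequin number, is the technical heart of the argument and relies on Eliashberg's classification of tight contact structures on the 3-ball. Performing the whole construction smoothly in families, as one interpolates between Seifert discs for $\K$ and $\hat\K$, is what finally produces the contact isotopy between the knots rather than merely between their spanning discs.
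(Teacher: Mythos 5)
This statement is not proved in the paper: it is quoted verbatim, with attribution, from Eliashberg's 1993 paper \cite{Eliash1993} as background for the discussion of the Bennequin number, so there is no internal argument to compare yours against. Your outline does follow the standard --- indeed Eliashberg's original --- route: write the Bennequin/self-linking number as the signed count $(e_--h_-)-(e_+-h_+)$ of singularities of the characteristic foliation of a Seifert disc, normalize that foliation by iterated Giroux elimination (tightness excluding limit cycles and overtwisted discs), and then carry one standardized disc to the other by an ambient contact isotopy. As a summary of the known argument this is faithful.

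As a proof, however, it has a genuine gap: the two points you yourself flag as ``the technical heart'' --- that the elimination process terminates in a normal form determined solely by the Bennequin number, and that spanning discs inducing the same characteristic foliation in a tight manifold are ambiently contact isotopic --- are precisely the content of Eliashberg's theorem, and you invoke them rather than establish them. Two smaller imprecisions are worth correcting if you expand the sketch. First, the elimination lemma is a local modification of the surface that does not by itself threaten tightness; tightness enters earlier, to constrain which configurations of singularities and limit sets the characteristic foliation of a disc can exhibit (e.g.\ a closed leaf on a disc would bound an overtwisted disc), not to ``license'' each cancellation. Second, the final step needs more than Gray stability: one uses Giroux's result that the characteristic foliation determines the germ of the contact structure along the surface, combined with the uniqueness of the tight contact structure on the $3$-ball, to promote the disc identification to an ambient contact isotopy through transversal knots.
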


In 1999, Etnyre proved the following.

\begin{thm}[\cite{Etn1999}]
Let $\K$ and $\hat\K$ be two positive torus knots of type $(p,q)$ in $\R^3$. If $\K$ and $\hat\K$ have
the same Bennequin number, then they are contact isotopic. In addition, the Bennequin number $\beta(\K)$
satisfy the inequality $\beta(\K)\leq  pq - p - q$.
\end{thm}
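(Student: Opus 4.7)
The plan is to treat the two assertions separately, since neither follows from the CR-geometric apparatus developed in the paper; both belong squarely to contact topology.

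For the Bennequin bound $\beta(\K)\leq pq-p-q$, the natural route is the classical Bennequin inequality $\beta(\K)\leq -\chi(\Sigma)$, valid for any Seifert surface $\Sigma$ of a transversal knot in a tight contact 3-manifold. Tightness of the standard contact structure on $\R^3$ is Bennequin's original theorem and is the input. Positive torus knots $T(p,q)$ are fibered, with Seifert genus $g=(p-1)(q-1)/2$; this is classical and is seen either by Seifert's algorithm applied to the standard braid projection or via the Milnor fibre of $z_1^p+z_2^q$. Therefore the minimal $-\chi(\Sigma)=2g-1=pq-p-q$, producing the inequality. This part is essentially mechanical once the genus computation and Bennequin's inequality are granted.

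For the classification statement the strategy I would follow is to reduce transversal isotopy to Legendrian isotopy via the transverse push-off / Legendrian approximation correspondence: given $\K$, a tubular neighborhood admits a standard form with convex boundary (Giroux), and $\K$ is contact isotopic to the positive transverse push-off of any of its Legendrian approximations $L$, with the numerical identity $\beta(\K)=\mathrm{tb}(L)-\mathrm{rot}(L)$. The problem is thereby reduced to the Etnyre--Honda classification of Legendrian positive torus knots: such Legendrians are determined up to contact isotopy by the pair $(\mathrm{tb},\mathrm{rot})$ together with the combinatorics of stabilization. One then checks that any two Legendrian approximations of transversal $T(p,q)$'s with the same value of $\mathrm{tb}-\mathrm{rot}$ yield contact-isotopic push-offs, whence $\beta$ is a complete transversal invariant.

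The hard step is the classification. It rests on a delicate convex-surface analysis of a standard torus containing the knot: one must show that a tubular neighbourhood of a positive torus knot carries a unique tight contact structure realising the maximal $\mathrm{tb}$, and then track how successive negative stabilisations decrease $\mathrm{tb}$ and alter $\mathrm{rot}$ in a controlled way that is compatible with the transverse push-off. Within the CR framework of the present paper, a natural reformulation would use the isoparametric family $\E_{r,\rho}$ of Proposition \ref{3.4.2}: if one could prove that every transversal $T(p,q)$ is CR-isotopic to some $\E_{r,\rho}$ with $r=r(p,q)$ rational and the Clifford parameter $\rho$ varying so as to realise any admissible Bennequin number, the classification would follow from the rigidity of symmetric configurations. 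This is precisely the uniqueness-on-Heisenberg-cyclides property which the paper supports only by numerical experiment, and it is the obstacle I would not expect to clear without importing contact-topological tools from outside the CR-geometric framework.
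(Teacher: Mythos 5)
The paper offers no proof of this statement: it is quoted, with attribution, from Etnyre's paper \cite{Etn1999} (note the bracketed citation in the theorem header), so there is no internal argument to compare yours against. Evaluating your sketch on its own merits, the two halves have very different status. The inequality $\beta(\K)\leq pq-p-q$ is correctly reduced to Bennequin's inequality $\beta(\K)\leq -\chi(\Sigma)$ together with the Seifert genus $g=(p-1)(q-1)/2$ of the $(p,q)$ torus knot, so that $-\chi=2g-1=pq-p-q$; granting those two classical inputs, this half is essentially complete.

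The classification half, however, is a plan rather than a proof, and you say as much. Reducing transverse isotopy to Legendrian isotopy via Legendrian approximation and negative stabilization, and then invoking the Etnyre--Honda classification of Legendrian torus knots \cite{EtHo}, is mathematically coherent but historically inverted: that Legendrian classification postdates the 1999 transversal result and is not something you establish here. The step you flag as ``the hard step'' --- the convex-surface analysis showing uniqueness of the tight structure on a neighbourhood realising maximal $\mathrm{tb}$ and the controlled bookkeeping of stabilizations --- is precisely the content of the theorem, and it is absent. The proposed alternative of rederiving the classification from the paper's isoparametric family $\E_{r,\rho}$ cannot close the gap either: the paper computes the Bennequin numbers of those configurations only by numerical experiment, and exhibiting a family of symmetric representatives does not show that an \emph{arbitrary} transversal $(p,q)$ torus knot with a given Bennequin number is contact isotopic to one of them. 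In short, the bound is proved modulo standard facts, but the classification statement remains unproved.
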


\end{remark}

\begin{figure}[h]
\begin{center}
\includegraphics[height=6.2cm,width=6.2cm]{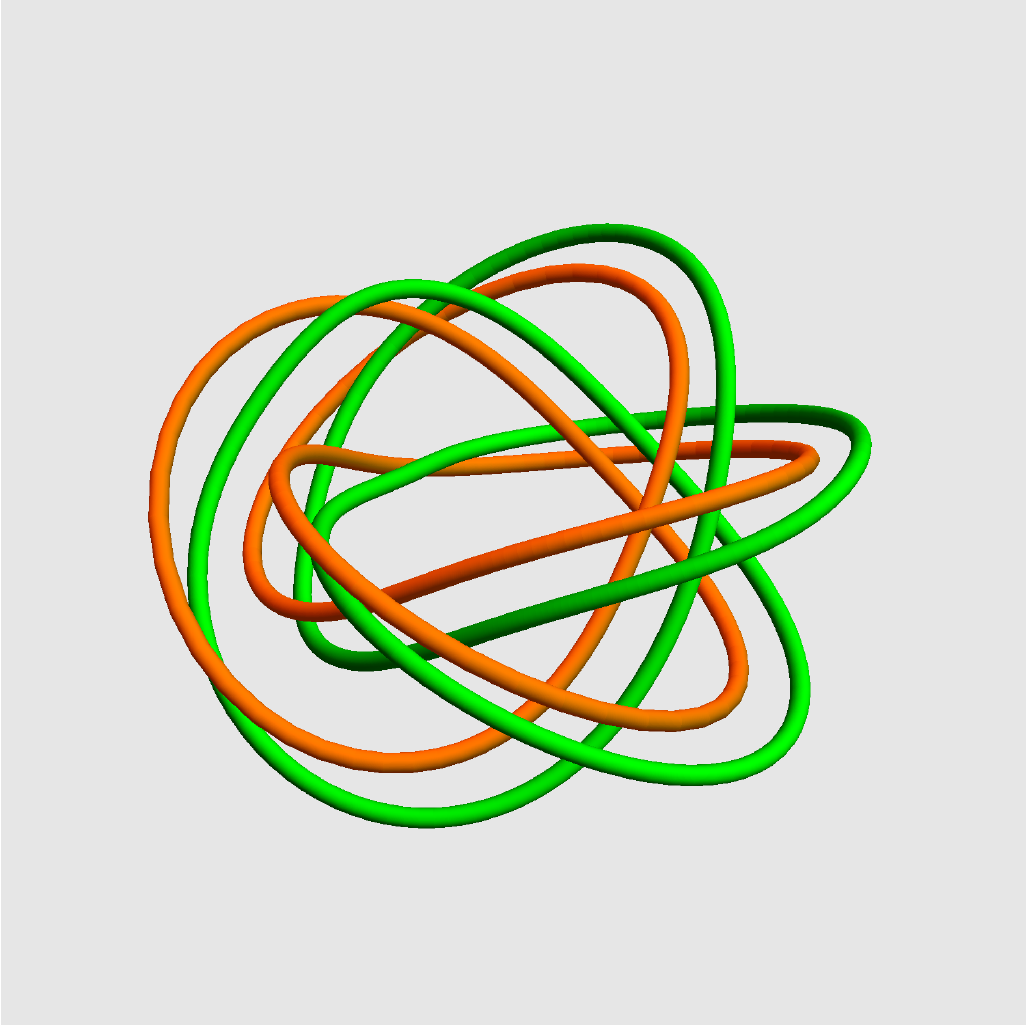}
\caption{\small{A generic transversal positive torus knot of type $(3,4)$ (orange) and
its contact push-off (green) in the direction of the vector field $\xi = \frac{1}{\sqrt{1 +y^2}}(\partial_x +y \partial_z)$
at distance $0.4$. The Bennequin number of this knot attains the maximum possible value, i.e., $\beta =5$.}}\label{FIG5}
\end{center}
\end{figure}

\begin{defn}
Let $\K$ be a generic transversal knot, with natural parametrization $\gamma :\R \to\S$.
Let $\omega$ denote the minimal period of $\gamma$ and let $\Gamma : \R \to\C^{2,1}$ be a W-lift of $\gamma$.
Then $\Gamma(\omega) = \varepsilon_\gamma\Gamma(0)$, where $\varepsilon_\gamma$ is a cube root of the unity which
is independent of the choice of the lift. The phase exponent of $\varepsilon_\gamma$ in the interval
$[0,2\pi)$ is called the \emph{phase anomaly} of $\K$. If $\varepsilon_\gamma\neq1$, the minimal period
of $\Gamma$ is $3\omega$. In this case, we say that $\K$ is a {\em generic transversal knot}
with \emph{CR spin} $1/3$.
Otherwise, the \emph{CR spin} of $\K$ is 1.
\end{defn}

\begin{remark}
The CR anomaly and the CR spin are invariant under CR isotopies.
\end{remark}

Since $\Gamma : \R\to\C^{2,1}$ takes values in the null-cone of $\C^{2,1}$, $\Gamma_1-i\Gamma_3$ is
a nowhere vanishing $\C$-valued map of period $\sigma_\gamma\omega$, where  $\sigma_\gamma$ is the CR spin of $\gamma$.
Consider the map
$$
  \chi ={\Gamma_1-i\Gamma_3}: \frac{\R}{\sigma_\gamma\omega\Z} \cong {\rm S}^1 \to \mathbb C\setminus \{0\}
    $$
and let $\mu_\K\in\Z$ be the degree of $\chi$, that is,
$$
  \mu_\K=\frac{i}{2\pi}\int_0^{\sigma_\gamma\omega}{\chi}^{-1}{d\chi}.
    $$
It follows that $\mu_\K$ is independent of $\Gamma$ and, by construction,
is invariant under CR isotopies.

\begin{defn}\label{def:maslov}
The integer  $\mu_\K$ is called the \emph{Maslov index} (or {\em rotation number}) of $\K$.
The rotation number is well defined for any closed generic transversal curve.
\end{defn}

To define the last invariant we use the Heisenberg model, so that $\R^3$ with its standard contact structure
is identified with $\S\setminus\{P_\infty\}$ via the Heisenberg chart. Let $\K\subset\R^3$ be a generic transversal
 knot and let $\gamma :\R\to\R^3$ be a natural parametrization of $\K$, with minimal period $\omega$. Consider the CR
 trihedron $(\vec T  ,\vec N,J(\vec N))$ along $\gamma$. Then, for small values of the parameter $\epsilon$, the map
$$
  \tilde\gamma_\epsilon : \R\ni s \mapsto\gamma(s) + \epsilon\frac{\vec N({\gamma(t)})}{\|\vec N({\gamma(t)})\|}\in\R^3
   $$
parametrizes a generic transversal knot $\tilde\K_\epsilon$ disjoint from $\K$, called the \emph{CR push-off} of $\K$.

\begin{defn}
The linking number Lk$(\K,\tilde\K_\epsilon)$, denoted by SL$(\K)$, is called the \emph{CR self-linking number} of $\K$.
By construction, SL$(\K)$ is invariant under CR isotopies.
\end{defn}

\begin{remark}
The CR self-linking number is the analogue of the self-linking number of a knot in $\R^3$
with no ordinary inflection points (cf. \cite{Ba,Ca,DG,GluPan1998,Po,White}).
The CR self-linking number can be evaluated via the Gaussian linking integral, that is,
$$
  {\rm SL}(\K)=\int_0^\omega\int_0^\omega\frac{\det\left( \gamma(t)-\tilde\gamma_\epsilon(s),\gamma'(t),\tilde\gamma_\epsilon'(s)\right)}{\| \gamma(t)-\tilde\gamma_\epsilon(s) \|^3}dtds.
     $$
If the CR normal vector field $\vec N$ along $\K$ can be extended to a nowhere vanishing global cross section of the contact distribution $\D$, then the CR self-linking number and the Bennequin number do coincide.
\end{remark}

\section{Isoparametric strings and knots}\label{s:3}

\subsection{Isoparametric curves}

\begin{defn}
A generic transversal curve is called \emph{isoparametric} if its bending $\kappa$ and twist $\tau$ are constant.
{A closed isoparametric curve is referred to as an {\em isoparametric string}.
An isoparametric string which is a knot is referred to as an {\em isoparametric knot}.
}
\end{defn}

 Let $\gamma : \R\to\S$ be a natural parametrization of an isoparametric curve with bending $
 \kappa$ and twist $\tau$. Let ${\rm K}_{\kappa,\tau}$ be the element of the Lie algebra $\mathfrak{g}$
 defined by
$$
{\rm K}_{\kappa,\tau}=
\begin{pmatrix}
i\kappa&-i&\tau\\
0&-2i\kappa&1\\
1&0&i\kappa\\
\end{pmatrix}
$$
Then, $\gamma$ is congruent to the orbit of the 1-parameter group of CR automorphisms
$$
  \mathcal{G}_{\kappa,\tau}:\R\ni s\mapsto \mathrm{Exp}({s{\rm K}_{\kappa,\tau}})\in{G}
    $$
passing through the origin $P_0=[{^t\!(1},0,0)]\in\S$. The purposes of this section are twofold. The first is to
investigate for which values
of the parameters $\kappa$, $\tau$ and for which orbits of $\mathcal{G}_{\kappa,\tau}$ the
corresponding isoparametric curve is
a knot. The second purpose is to analyze the global invariants of isoparametric knots.

\begin{defn}
The {\em Hamiltonian} of an isoparametric curve $\gamma$, with bending $\kappa$ and twist $\tau$, is
the traceless self-adjoint endomorphism of $\C^{2,1}$ defined by ${\rm H}_{\kappa,\tau}=i {\rm K}_{\kappa,\tau}$.
\end{defn}

\begin{remark}\label{discr}
The characteristic polynomial of ${\rm H}_{\kappa,\tau}$ is
$$
  P_{\kappa,\tau}(t)= -t^3+ (3\kappa^2-\tau)t +2\kappa^3+2\kappa\tau - 1
    $$
and the discriminant of $P_{\kappa,\tau}(t)$ is given by
$$
   D_{\kappa,\tau}=-27 + 108\kappa(\kappa^2+\tau) - 324\kappa^4\tau - 72\kappa^2\tau^2- 4\tau^3.
    $$
\end{remark}

\subsection{Isoparametric strings}

We prove the following.


\begin{prop}\label{3.2.1}
Let $\gamma$ be an isoparametric string with bending $\kappa$ and twist $\tau$. Then $D_{\kappa,\tau} > 0$.
\end{prop}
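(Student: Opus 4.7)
Up to CR congruence, any natural parametrization of $\gamma$ has the form $\gamma(s)=[\mathrm{Exp}(s\mathrm{K}_{\kappa,\tau})\,P_0]$ with $P_0=[{}^t(1,0,0)]$, so $\gamma$ being closed is equivalent to the existence of $s_0>0$ and $c\in\C^*$ with $\mathrm{Exp}(s_0\mathrm{K}_{\kappa,\tau})P_0=cP_0$, i.e., with $P_0$ an eigenvector of $\mathrm{Exp}(-is_0\mathrm{H})$, where $\mathrm{H}:=\mathrm{H}_{\kappa,\tau}$. The plan is to show that this forces $\mathrm{H}$ to have three distinct real eigenvalues, which by Remark \ref{discr} is exactly $D_{\kappa,\tau}>0$.

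The key observation is that $P_0$ is a \emph{cyclic vector} for $\mathrm{H}$: a direct computation gives $\det(P_0,\mathrm{H}P_0,\mathrm{H}^2P_0)=i$ independently of $\kappa,\tau$, so $\{P_0,\mathrm{H}P_0,\mathrm{H}^2P_0\}$ is a basis of $\C^{2,1}$. Consequently, the minimal polynomial of $\mathrm{H}$ coincides with the characteristic polynomial $P_{\kappa,\tau}(t)$. Two consequences I will use: (i) in the generalized-eigenspace decomposition of $\C^{2,1}$, the vector $P_0$ has a nonzero component in each summand, each component being cyclic for the corresponding restriction of $\mathrm{H}$; (ii) every repeated root of $P_{\kappa,\tau}$ yields a genuine Jordan block of size $\geq 2$.

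I now assume $D_{\kappa,\tau}\le 0$ and derive a contradiction. If $D_{\kappa,\tau}<0$, then $P_{\kappa,\tau}$ has one real root $e$ and a complex-conjugate pair $a\pm bi$ with $b>0$; $\mathrm{H}$ is diagonalizable with three distinct eigenvalues, and $\mathrm{Exp}(-is_0\mathrm{H})$ acts on the respective eigenlines by scalars of moduli $1,\,e^{s_0b},\,e^{-s_0b}$. Since the three eigenspace-components of $P_0$ are nonzero and linearly independent, the eigenvector equation forces these three scalars to agree, yielding $e^{2s_0 b}=1$ and hence $s_0=0$. If $D_{\kappa,\tau}=0$, then on the generalized eigenspace $V_\alpha$ of the repeated root write $\mathrm{H}|_{V_\alpha}=\alpha I + N$ with $N$ nonzero nilpotent; cyclicity of $P_{0,\alpha}$ in $V_\alpha$ gives $NP_{0,\alpha}\ne 0$. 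Expanding
\[
\mathrm{Exp}(-is_0\mathrm{H})P_{0,\alpha}=e^{-is_0\alpha}\!\left(P_{0,\alpha}-is_0\,NP_{0,\alpha}-\tfrac{s_0^2}{2}N^2P_{0,\alpha}\right),
\]
and using that the (nonzero) vectors among $P_{0,\alpha},NP_{0,\alpha},N^2P_{0,\alpha}$ are linearly independent in $V_\alpha$, the eigenvector equation matches the coefficient of $NP_{0,\alpha}$ and again forces $s_0=0$.

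The only genuinely nontrivial step is the cyclicity of $P_0$ for $\mathrm{H}_{\kappa,\tau}$; once this is in hand, both branches of the case analysis amount to the elementary fact that the exponential of a non-semisimple or non-real generator cannot periodically scale a cyclic vector.
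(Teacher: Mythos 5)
Your proof is correct, and it takes a genuinely different route from the paper's. The paper also splits into the cases $D_{\kappa,\tau}<0$ and $D_{\kappa,\tau}=0$, but it argues through the Wilczynski frame: in each case it constructs an adapted light-cone or pseudo-unitary basis putting $\mathrm{K}_{\kappa,\tau}$ into normal form, writes $\F(s)=\mathcal{B}^{-1}\mathrm{Exp}(s\mathrm{K})\mathcal{B}$ explicitly, and concludes from the non-periodicity of $\F$ (which implicitly rests on the essential uniqueness of the Wilczynski frame from Proposition \ref{2.2.3}). The bulk of the paper's effort, in the subcase $D_{\kappa,\tau}=0$ with two distinct eigenvalues, is a causal-character analysis showing that the eigenspace of the simple eigenvalue must be spacelike and that of the double eigenvalue lightlike; this is what forces the nontrivial off-diagonal term in the normal form and hence the secular factor $s$ in the exponential. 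Your observation that $P_0$ is a cyclic vector for $\mathrm{H}_{\kappa,\tau}$ (via $\det(P_0,\mathrm{H}P_0,\mathrm{H}^2P_0)=i$, which does check out) replaces all of that: non-derogatoriness gives the Jordan block of size at least $2$ for free, and working directly with the exact projective periodicity condition $\mathrm{Exp}(s_0\mathrm{K}_{\kappa,\tau})P_0=cP_0$ sidesteps the frame-uniqueness step as well, since that condition is literally equivalent to $\gamma$ being closed. The trade-off is that your argument produces less by-product information: the causal characters of the eigenspaces established inside the paper's proof are reused in the proof of Proposition \ref{3.2.3}, so the paper's longer computation is doing double duty, whereas your argument would leave that proposition to be proved from scratch.
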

\begin{proof}
 Let $\lambda$ be an eigenvalue of ${\rm H}_{\kappa,\tau}$. Then, the corresponding eigenspace, denoted by $\mathbb{V}_\lambda$, is generated by the vector
$$
 v_\lambda={^t\!\Big(} \lambda^2 - \kappa\lambda -2 \kappa^2, -1, -2 i\kappa + i\lambda\Big).
 $$
\vskip0.1cm
\noindent{\bf Claim 1.} If $D_{\kappa,\tau} < 0$, then $\gamma$ cannot be periodic.
\vskip0.1cm
If $D_{\kappa,\tau} < 0$, the Hamiltonian ${\rm H}_{\kappa,\tau}$ has a simple real eigenvalue $-2a$
and two complex conjugate eigenvalues $\lambda = a + ib$, $\overline\lambda = a - ib$, with $ b > 0$.
Since the eigenspaces of $\lambda$ and $\overline\lambda$ are lightlike, there exists a light-cone�
basis $\mathcal{B}$ of $\C^{2,1}$, such that
$$
\mathcal{B{\rm K}B}^{-1}=
\begin{pmatrix}
i\lambda&0&0\\
0&-2ia&0\\
0&0&i\overline \lambda\\
\end{pmatrix}=
\begin{pmatrix}
ia-b&0&0\\
0&-2ia&0\\
0&0&ia+b\\
\end{pmatrix}.
$$
Therefore, by possibly replacing $\gamma$ with a congruent curve, the map
$$
 \F=\mathcal{B}^{-1}\,\mathrm{Exp}(s\rm K)\,\mathcal{B}=\mathcal{B}^{-1}\
\begin{pmatrix}
e^{i(a+ib)s}&0&0\\
0&e^{-2ias}&0\\
0&0&e^{(ia+b)s}\\
\end{pmatrix}
\mathcal{B}
 $$
is a Wilczynski frame along $\gamma$. Since $\F$ is not periodic, $\gamma$ cannot be periodic.

\vskip0.1cm
\noindent{\bf Claim 2.} If $D_{\kappa,\tau} = 0$, then $\gamma$ cannot be periodic.
\vskip0.1cm
There are two possible cases: (1) either ${\rm H}_{\kappa,\tau}$ has a unique eigenvalue with algebraic multiplicity 3;
or (2) it has two distinct real eigenvalues with algebraic multiplicity one and two, respectively.

(1) In the first case, the eigenvalue is necessarily 0, the bending $1/2$ an the twist $3/4$. Thus,
 by possibly considering a congruent curve, the Wilczynski frame along $\gamma$ is given by
$$
\F=
\begin{pmatrix}
\frac{1}{4}(4 + 2 i s + s^2)&-\frac{1}{4}(4i+s)s&\frac{1}{8}(6-is)s\\
\frac{s^2}{2}&1 - i s - \frac{s^2}{2} &  s -\frac{is^2}{4}\\
s+\frac{is^2}{2}&-\frac{is^2}{4}&\frac{1}{4}(4 + 2 i s + s^2)\\
\end{pmatrix}.
$$
Since $\F$ is not periodic, $\gamma$ cannot be periodic.

(2) In the second case, suppose that  ${\rm H}_{\kappa,\tau}$ has an eigenvalue $a$ with algebraic multiplicity 2
and a simple eigenvalue, $-2a$, with $a\in\R\setminus\{0\}$.
We prove that the eigenspace $\mathbb{V}_{-2 a}\subset\C^{2,1}$ cannot be timelike.
By contradiction, if $\mathbb{V}_{-2 a}$ is timelike, its Hermitian orthogonal
complement $\mathbb{V}_{ -2 a}^\perp$ is 2-dimensional, spacelike and $\rm H$-invariant.
Hence, the restriction of $\rm H$ to $\mathbb{V}_{ -2 a}^\perp$ is a self-adjoint endomorphism of
a Hermitian vector space. In particular, it is diagonalizable and $\mathbb{V}_{ -2 a}^\perp$
would be the eigenspace of the eigenvalue $-2a$.
This contradicts the fact that the eigenspaces of $\rm H$ are 1-dimensional.
Now, we show that $\mathbb{V}_{ -2 a}$ a cannot be lightlike. By contradiction, if $\mathbb{V}_{ -2 a}$ a is
lightlike, then $\mathbb{V}_{ -2 a}\subset\mathbb{V}_{ -2 a}^\perp$ and $\mathbb{V}_{  a}\subset\mathbb{V}_{ -2 a}^\perp$.
Since $\mathbb{V}_{a}$ is orthogonal to a lightlike vector, it follows that $\mathbb{V}_{ a}$ is spacelike.
Thus, $\mathbb{V}_{ -2 a}^\perp$ would be the direct sum of $\mathbb{V}_{ -2 a}$ and $\mathbb{V}_{ a}$,
where $\mathbb{V}_{ -2 a}$ a is light-like and $\mathbb{V}_{ a}$ is spacelike. Hence, there exist a light-cone�
basis $\mathcal{B} = (B_1,B_2,B_3)$ of $\C^{2,1}$ such that $B_1\in\mathbb{V}_{-2 a}$ and $B_2\in\mathbb{V}_{ a}$.
Then,
\begin{equation}\label{frame}
\mathcal{B}^{-1}{\rm K}\mathcal{B}=
\begin{pmatrix}
-2a&0&ic\\
0&a&0\\
0&0&a\\
\end{pmatrix},\ a,c\in\R.
\end{equation}
Therefore, we would have $-2ia = \langle {\rm H}B_1,B_3\rangle = \langle B_1,{\rm H}B_3\rangle = ia$, and hence $a=0$,
which is a contradiction.
Thus the only possibility is that $\mathbb{V}_{-2a}$ is spacelike and $\mathbb{V}_{a}$ is lightlike.
Then, there exists a light-cone basis $\mathcal{B} = (B_1,B_2,B_3)$, such that the real
constant $c$ in \eqref{frame} is different from 0. By possibly replacing $\gamma$ with a congruent curve, the map
$$
\F=\mathcal{B}^{-1}\,\mathrm{Exp}(s\rm K)\,\mathcal{B}=\mathcal{B}^{-1}
\begin{pmatrix}
e^{ias}&0&ce^{ias}\\
0&e^{-2ias}&0\\
0&0&e^{ias}\\
\end{pmatrix}\mathcal{B}.
$$
is a Wilczynski frame along the curve. Then, $\F$ is non-periodic and the curve cannot be closed.
\end{proof}

\begin{defn}
If $D_{\kappa,\tau} > 0$, the Hamiltonian ${\rm H}_{\kappa,\tau}$ has three distinct real eigenvalues:
$$
e_1(\kappa,\tau) < e_2(\kappa,\tau) < e_3(\kappa,\tau).
  $$
The eigenvalue $e_3(\kappa,\tau)$ is positive, $e_1(\kappa,\tau)$ is negative, and $e_2(\kappa,\tau)=- e_3(\kappa,\tau)- e_1(\kappa,\tau)$. The quotient
$$
  r(\kappa,\tau) =\frac{e_1(\kappa,\tau)}{e_3(\kappa,\tau)}
     $$
     is said to be the \emph{spectral ratio}. By construction, $r(\kappa,\tau)\in (-2,-1/2)$.
     \end{defn}

\begin{prop}\label{3.2.2}
Let $\gamma$ be an isoparametric curve with $D_{\kappa,\tau} > 0$. Then, $\gamma$ is periodic if
and only if the spectral ratio $r(\kappa,\tau)$ is rational.
\end{prop}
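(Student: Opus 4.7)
Since $D_{\kappa,\tau}>0$, the Hamiltonian $\mathrm{H}_{\kappa,\tau}$ is diagonalizable over $\C^{2,1}$ with three distinct real eigenvalues $e_1<e_2<e_3$ (with $e_1+e_2+e_3=0$), so $\mathrm{K}_{\kappa,\tau}=-i\mathrm{H}_{\kappa,\tau}$ has eigenvalues $-ie_1,-ie_2,-ie_3$. The plan is to diagonalise $\mathrm K$ by a (possibly non light-cone) basis $\mathcal B=(B_1,B_2,B_3)$ of eigenvectors, mimicking the strategy used in the proof of Proposition \ref{3.2.1}, and then translate the periodicity of $\gamma$ into an arithmetic condition on $e_1,e_2,e_3$.

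Working in the basis $\mathcal B$, the curve $\gamma$ is represented by
\[
\gamma(s)=\Big[c_{1}e^{-ie_{1}s}B_{1}+c_{2}e^{-ie_{2}s}B_{2}+c_{3}e^{-ie_{3}s}B_{3}\Big],
\]
where $(c_1,c_2,c_3)$ are the coordinates of $P_0={}^t(1,0,0)$ in $\mathcal B$. Using the explicit formula $v_\lambda={}^t(\lambda^{2}-\kappa\lambda-2\kappa^{2},-1,-2i\kappa+i\lambda)$ for the eigenvector generating the eigenspace $\mathbb V_\lambda$ (recalled in the proof of Proposition \ref{3.2.1}), I would check that the second component of each $v_{e_j}$ is $-1\neq 0$; solving the linear systems obtained by equating $P_0$ to a one- or two-term combination of the $v_{e_j}$ and using that the $e_j$ are pairwise distinct, one concludes that $c_1,c_2,c_3$ are all nonzero. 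This is the first key input.

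Next I would translate periodicity. A period $\omega>0$ means that, at every $s$, the vector $\sum_j c_j e^{-ie_j(s+\omega)}B_j$ is a scalar multiple of $\sum_j c_j e^{-ie_j s}B_j$. Since the three factors $c_j e^{-ie_j s}B_j$ are linearly independent and all nonzero, this forces $e^{-ie_{1}\omega}=e^{-ie_{2}\omega}=e^{-ie_{3}\omega}$, equivalently
\[
(e_{2}-e_{1})\omega\in 2\pi\Z\quad\text{and}\quad (e_{3}-e_{1})\omega\in 2\pi\Z.
\]
Such an $\omega>0$ exists if and only if the ratio $(e_{2}-e_{1})/(e_{3}-e_{1})$ is rational.

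Finally, to finish, I would substitute $e_2=-e_1-e_3$ and set $r=e_1/e_3\in(-2,-1/2)$, obtaining
\[
\frac{e_{2}-e_{1}}{e_{3}-e_{1}}=\frac{-(2r+1)}{1-r}.
\]
The map $r\mapsto -(2r+1)/(1-r)$ is a M\"obius transformation with rational coefficients and nonzero determinant, hence a bijection of $\mathbb Q\cup\{\infty\}$ onto itself, so rationality of the left-hand side is equivalent to rationality of $r$. The main (small) obstacle is step three: ruling out that $P_0$ lies in the union of the coordinate planes of $\mathcal B$, because this is what guarantees the \emph{necessary} direction; conveniently, the explicit form of $v_\lambda$ makes this immediate, and the rest of the argument is linear algebra plus an elementary M\"obius computation.
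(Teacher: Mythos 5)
Your proposal is correct, and it follows a genuinely different (and somewhat more elementary) route than the paper. The paper first shows that no eigenspace of $\mathrm H_{\kappa,\tau}$ can be lightlike, so that the eigenvectors can be normalized into a pseudo-unitary basis $\mathcal B$; it then conjugates the Wilczynski frame into the form $\F(s)=\mathcal B^{-1}\,\mathrm{diag}(e^{-ie_{\sigma(1)}s},e^{-ie_{\sigma(2)}s},e^{-ie_{\sigma(3)}s})\,\mathcal B$ and reduces the problem to ``$\gamma$ periodic iff $\F$ periodic iff $e_1/e_3,\,e_2/e_3\in\Q$,'' where the implication ``$\gamma$ periodic $\Rightarrow\F$ periodic'' silently invokes the uniqueness of the Wilczynski frame up to cube roots of unity (Proposition \ref{2.2.3}). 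You instead bypass the frame entirely and work with the projective curve $\gamma(s)=\big[\sum_j c_j e^{-ie_js}B_j\big]$, for which any linear eigenbasis suffices (no causal-character analysis needed); the burden is shifted to checking $c_1,c_2,c_3\neq 0$, which your explicit computation with $v_\lambda={}^t(\lambda^2-\kappa\lambda-2\kappa^2,-1,-2i\kappa+i\lambda)$ and the distinctness of the $e_j$ does correctly handle. Note that your periodicity criterion $(e_2-e_1)/(e_3-e_1)\in\Q$ is a priori weaker than the paper's $e_1/e_3,e_2/e_3\in\Q$ (projective periodicity only forces the three phases to be \emph{equal}, not equal to $1$), but on the trace-zero locus $e_1+e_2+e_3=0$ your M\"obius computation shows both are equivalent to $r\in\Q$, so the two proofs land in the same place. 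The paper's version buys reuse of machinery it needs anyway (the causal characters reappear in Proposition \ref{3.2.3}); yours buys self-containedness and avoids the unstated appeal to frame uniqueness.
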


\begin{proof}
Let $a$ be the highest eigenvalue of the Hamiltonian H. The spectrum of H is given by
$$e_1 = ra < e_2 = -(1+r)a < e_3 = a,$$
where $r$ is the spectral ratio. We claim that the eigenspaces cannot be lightlike.
Let $(C_1,C_2,C_3)$
be a basis whose elements are eigenvectors relative to the eigenvalues $e_1$, $e_2$ and $_ 3$, respectively.
If, by contradiction, one of them is lightlike, for instance $C_1$, then $C_1$, $C_2$ and $C_3$ belong to
the orthogonal complement of $C_1$. Consequently, they would be linearly dependent. Then, there exists a
pseudo-unitary basis $\mathcal{B}$ of $\C^{2,1}$,
 such that
$$
\mathcal{B}^{-1}{\rm K}\mathcal{B}=
\begin{pmatrix}
-ie_{\sigma(1)}&0&0\\
0&-ie_{\sigma(2)}&0\\
0&0&-ie_{\sigma(3)}\\
\end{pmatrix},
$$
where $\sigma$ is a permutation of $(1,2,3)$. Hence, by possibly replacing $\gamma$ with a
congruent curve, the Wilczynski frame field along $\gamma$ is given by
$$
\F=\mathcal{B}^{-1}
\begin{pmatrix}
e^{-ie_{\sigma(1)}s}&0&0\\
0&e^{-ie_{\sigma(2)}s}&0\\
0&0&e^{-ie_{\sigma(3)}s}\\
\end{pmatrix}
\mathcal{B}.
$$
The curve $\gamma$ is periodic if and only if $\F$ periodic. On the other hand, from the
previous formula, it follows that $\F$ is periodic if and only if $e_1/ e_3$ and $e_2/ e_3$
are rational. Since $e_2 = - e_1- e_3$, we may conclude that $\gamma$ is closed if
and only if $r$ is rational, as claimed.
\end{proof}

The region
$$
  \mathcal{P} = \left\{(\kappa,\tau) \in \R^2 \mid D(\kappa,\tau) > 0\right\}
   $$
consists of two connected components,
$$
\mathcal{P}_- = \left\{(\kappa,\tau) \in \R^2 \mid D(\kappa,\tau) > 0,\,  3\kappa< |\tau |^{1/2}\right\}
$$
and
$$
   \mathcal{P}_+= \left\{(\kappa,\tau) \in \R^2 \mid D(\kappa,\tau) > 0,\,  3\kappa> |\tau |^{1/2}\right\}.
   $$

\begin{defn}
An iso\-pa\-ra\-metric string with spectral ratio $r = \frac{p}{q}$ $\in$ $\mathbb{Q}\cap (-2,-1/2)$,
bending $\kappa$ and twist $\tau$, is said to be of the \emph{first class} if $(\kappa,\tau)$  $\in$  $\mathcal{P}_+$;
of the \emph{second class} if $(\kappa,\tau)$ $\in$  $\mathcal{P}_-$.
\end{defn}

\begin{prop}\label{3.2.3}
Let $\gamma$ be an isoparametric curve such that $D_{\kappa,\tau}> 0$. Let $e_1 < e_2 <e_3$
be the eigenvectors of the Hamiltonian ${\rm H}_{\kappa,\tau}$ and $\mathbb{V}_1(\kappa,\tau)$,
$\mathbb{V}_2(\kappa,\tau)$, $\mathbb{V}_3(\kappa,\tau)$ the corresponding eigenspaces.
Then,
\begin{enumerate}
\item $\mathbb{V}_1(\kappa,\tau)$ is spacelike, for every $(\kappa,\tau)\in \mathcal{P}$;

\item if $(\kappa,\tau)\in \mathcal{P}_+$, $\mathbb{V}_2(\kappa,\tau)$ is time-like and $\mathbb{V}_3(\kappa,\tau)$ is spacelike;

\item if $(\kappa,\tau)\in \mathcal{P}_-$, $\mathbb{V}_2(\kappa,\tau)$ is space-like and $\mathbb{V}_3(\kappa,\tau)$ is timelike.
\end{enumerate}
\end{prop}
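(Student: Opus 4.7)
The plan is to combine three standard ingredients with a connectedness argument. First, since $\mathrm{H}_{\kappa,\tau}$ is self-adjoint with respect to the Hermitian form $\langle\cdot,\cdot\rangle$ of signature $(2,1)$, distinct eigenvalues force mutual orthogonality of eigenspaces: $\mathbb{V}_i \perp \mathbb{V}_j$ whenever $i \neq j$. Second, the argument used in Claim 2 of the proof of Proposition \ref{3.2.1} can be recast into the following purely linear-algebraic fact, valid on all of $\mathcal{P}$: if some $\mathbb{V}_j$ were lightlike, it would be contained in its own $2$-dimensional $\mathrm{H}_{\kappa,\tau}$-invariant orthogonal complement $\mathbb{V}_j^{\perp}$, which would then also contain the other two eigenspaces, yielding a non-trivial linear dependence among eigenvectors with distinct eigenvalues, a contradiction. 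Consequently, $\mathbb{C}^{2,1}$ splits orthogonally as $\mathbb{V}_1 \oplus \mathbb{V}_2 \oplus \mathbb{V}_3$ into three non-degenerate lines, and the signature $(2,1)$ of $\langle\cdot,\cdot\rangle$ forces exactly one of them to be timelike and the remaining two to be spacelike.

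To identify which eigenspace carries the negative direction, I would use the explicit eigenvector given in the proof of Proposition \ref{3.2.1},
\[
v_\lambda = {}^{t}\!\bigl(\lambda^2 - \kappa\lambda - 2\kappa^2,\;-1,\;i(\lambda - 2\kappa)\bigr),
\]
for which a short direct computation yields
\[
\langle v_\lambda, v_\lambda\rangle = -2(\lambda - 2\kappa)^2(\lambda + \kappa) + 1.
\]
On $\mathcal{P}$ the three eigenvalues $e_1(\kappa,\tau) < e_2(\kappa,\tau) < e_3(\kappa,\tau)$ depend continuously (in fact smoothly) on $(\kappa,\tau)$, hence so does each real-valued function $(\kappa,\tau) \mapsto \langle v_{e_j(\kappa,\tau)}, v_{e_j(\kappa,\tau)}\rangle$. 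By the preceding paragraph this function is nowhere zero on $\mathcal{P}$, so its sign is locally constant and therefore constant on each connected component of $\mathcal{P}$. This reduces items (1)--(3) to a single sign check in each of $\mathcal{P}_+$ and $\mathcal{P}_-$.

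For the concrete verification I would take $(\kappa,\tau) = (1,0) \in \mathcal{P}_+$, where $3\kappa = 3 > 0 = |\tau|^{1/2}$ and $D_{1,0}=81>0$, with eigenvalues given by the three real roots of $t^3 - 3t - 1 = 0$; and $(\kappa,\tau) = (0,-2) \in \mathcal{P}_-$, where $3\kappa = 0 < \sqrt{2} = |\tau|^{1/2}$ and $D_{0,-2}=5>0$, for which the pairing formula simplifies to $\langle v_\lambda, v_\lambda\rangle = -2\lambda^3 + 1$ with eigenvalues $1$, $(-1+\sqrt{5})/2$, $(-1-\sqrt{5})/2$. Substituting each of the three eigenvalues into the pairing then pins down the timelike line at the representative point and, via the continuity argument above, matches the statements of (1), (2), (3) on the corresponding component.

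I expect the only real obstacle to be the clean ``no eigenspace is lightlike'' step, because it must hold uniformly over the whole parameter region $\mathcal{P}$ and not merely along the trajectory of a closed isoparametric curve; the corresponding piece of the proof of Proposition \ref{3.2.1} therefore has to be abstracted into a purely spectral lemma that makes no use of periodicity. After that step is in place, the remainder is a routine continuity-plus-example verification whose only subtlety is correctly identifying which connected component contains each chosen base point.
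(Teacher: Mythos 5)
Your proposal is correct and follows essentially the same route as the paper's proof: the paper likewise excludes lightlike eigenspaces by the linear-independence argument, introduces the nowhere-vanishing functions $\Phi_j(\kappa,\tau)=\langle V_j,V_j\rangle = 1-8\kappa^3+6\kappa e_j^2-2e_j^3$ (which agrees with your factored expression $-2(e_j-2\kappa)^2(e_j+\kappa)+1$), and determines the signs by evaluating on the half-lines $\{(\kappa,0)\mid\kappa>1/\sqrt2\}\subset\mathcal{P}_+$ and $\{(0,\tau)\mid\tau<-(3/2)^{2/3}\}\subset\mathcal{P}_-$, on which your test points $(1,0)$ and $(0,-2)$ lie. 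The only cosmetic differences are your added signature bookkeeping (exactly one timelike line among the three orthogonal eigenlines), which the paper leaves implicit, and your explicit abstraction of the ``no lightlike eigenspace'' step into a spectral lemma independent of periodicity, which the paper handles by citation to the proofs of its Propositions on isoparametric strings.
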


\begin{proof}
Given $a,b \in\R$, such that $a>0$, $b\neq0$ and $4a^3- 27b^2> 0$, let
$$
\begin{cases}
\lambda_1(a,b)=-2\sqrt{\frac{a}{3}}\cos\left(\frac{1}{3}\arctan\left(-\frac{\sqrt{12 a^3 - 81 b^2}}{9b}\right)
+\frac{\pi}{6}(1+\sign b)\right),\\
\lambda_2(a,b)=-2\sqrt{\frac{a}{3}}\cos\left(\frac{1}{3}\arctan\left(-\frac{\sqrt{12 a^3 - 81 b^2}}{9b}\right)
-\frac{\pi}{6}(3-\sign b)\right),\\
\lambda_3(a,b)=2\sqrt{\frac{a}{3}}\cos\left(\frac{1}{3}\arctan\left(\frac{\sqrt{12 a^3 - 81 b^2}}{9b}\right)
-\frac{\pi}{6}(1+\sign b)\right)\\
\qquad\qquad\hspace{0.2cm} -2\sqrt{\frac{a}{3}}\sin\left(\frac{1}{3}\arctan\left(\frac{\sqrt{12 a^3 - 81 b^2}}{9b}\right)-\frac{\pi}{6}\sign b\right).
\end{cases}
$$
Then, if $1 - 2\kappa^2- 2\kappa\tau\neq 0$, the eigenvalues of the Hamiltonian can be written as
\begin{equation}\label{eigen0}
  e_j(\kappa,\tau)=\lambda_j(3 \kappa^2 - \tau, 2 \kappa^3 + 2 \kappa\tau - 1).
   \end{equation}
From the proofs of Propositions \ref{3.2.1} and \ref{3.2.2}, it follows that $\mathbb{V}_j(\kappa,\tau)$,
$j=1,2,3$,
cannot be lightlike and that is spanned
by the vector
\begin{equation}\label{eigen}
  V_j(\kappa,\tau):={^t\!\left(-2 \kappa^2 -\kappa e_j(\kappa,\tau) + e_j(\kappa,\tau)^2, -1, -2 i\kappa + i e_j(\kappa, \tau)\right)}.
   \end{equation}
For all $j=1,2,3$, consider the functions
$$
\Phi_j(\kappa,\tau):=\langle V_j(\kappa,\tau), V_j(\kappa,\tau)\rangle=1 - 8 \kappa^3
  + 6 \kappa e_j(\kappa,\tau)^2 - 2 e_j(\kappa,\tau)^3,
   $$
which are nowhere zero on the domain $\mathcal{P}$.
The half line $\{(\kappa,0) \mid \kappa>1/\sqrt2 \}$ is contained in the connected
component $\mathcal{P}_+$ of $\mathcal{P}$.
From \eqref{eigen}, we have, for every $\kappa>1/\sqrt2$,
$$
 \Phi_1(\kappa,0)> 0,\quad \Phi_2(\kappa,0) < 0,\quad \Phi_3(\kappa,0) > 0.
   $$
Then, $\mathbb{V}_1(\kappa,\tau)$, $\mathbb{V}_3(\kappa,\tau)$ are spacelike and $\mathbb{V}_2(\kappa,\tau)$ is timelike,
for every $(\kappa,\tau)\in \mathcal{P}_+$.

The half line $\{(0,\tau) \mid \tau<-(3/2)^{2/3} \}$ is contained in the connected component  $\mathcal{P}_-$ of  $\mathcal{P}$.
From \eqref{eigen}, we have for every $\tau<-(3/2)^{2/3}$
$$
  \Phi_1(\kappa,0)> 0,\quad  \Phi_2(\kappa,0) > 0,\quad \Phi_3(\kappa,0) < 0.
     $$
Then, $\mathbb{V}_1(\kappa,\tau)$, $\mathbb{V}_2(\kappa,\tau)$ are spacelike and $\mathbb{V}_3(\kappa,\tau)$
is timelike, for every $(\kappa,\tau)\in \mathcal{P}_-$.\end{proof}

\subsection{Symmetrical configurations of the first kind}

Let $\A$ be the domain
\begin{equation}\label{A}
 \A= \left\{(r,\rho) \in (-2,-1/2)\times(0, \sqrt2) \mid \rho <\sqrt{ \frac{2(-3+2 \sqrt{2-r-r^2 })}{1+2 r}} \right\}
  \end{equation}
and $\mu_1 : \A\to (0,1)$ the smooth function defined by
{\small\begin{equation}\label{mu1-first-kind}
  \mu_1(r,\rho)=\frac{4 + 8 r + 12 \rho^2
  + (1 + 2  r) \rho^4}{4 + 8 r + 12 \rho^2 + (1 + 2  r) \rho^4
      - 2 ((2 + 3 r - 3 r^2 - 2 r^3)(-4\rho + \rho^5))^{2/3}}.
         \end{equation}}
For each $(r,\rho)\in\A$, consider
{\small$$
   e_1^1 (r,\rho) = r \frac{\mu_1 (r,\rho)}{ 1 - \mu_1 (r,\rho)} < e_1^2 (r,\rho)
   = -(1+r) \frac{\mu_1 (r,\rho)}{ 1 -\mu_1 (r,\rho) }
     < e_1^3 (r,\rho) =\frac{\mu_1 (r,\rho)}{1 -\mu_1 (r,\rho)},
       $$}
the diagonal matrix
$$
D_1(r,\rho)=
\begin{pmatrix}
-ie_1^2(r,\rho)&0&0\\
0&-ie_1^3(r,\rho)&0\\
0&0&-ie_1^1(r,\rho)\\
\end{pmatrix}
$$
and the curve $\gamma_{r,\rho} : \R \to \S$ defined by
\begin{equation}\label{symm-conf1}
 \gamma_{r,\rho}:\R \ni s  \mapsto \left(\mathcal{U}\,\mathrm{Exp}(sD_1(r,\rho))\,\mathcal{U}^{-1}\right)S(\rho) \in\S,
  \end{equation}
where
$$
\mathcal{U}=
\begin{pmatrix}
\frac{1}{\sqrt2}&0&\frac{i}{\sqrt2}\\
0&1&0\\
\frac{i}{\sqrt2}&0&\frac{1}{\sqrt2}\\
\end{pmatrix}
$$
and
$$
   S(\rho) = [{^t\!(}1,\rho, i\rho^2/2)] \in\S.
       $$

\begin{defn}
If $(r,\rho)\in \A$ and $r\in\Q$, the curve $\gamma_{r,\rho} : \R \to \S$
defined by \eqref{symm-conf1}
is called a \emph{symmetrical configuration} of the {\em first kind} with parameters $(r,\rho)$.
The parameter $r$ (respectively, $\rho$) is referred to as the {\em spectral} (respectively, {\em Clifford})
parameter of the symmetrical configuration.
%
%
\end{defn}

\begin{prop}\label{3.3.2}
%
If $\gamma_{r,\rho}$ is a symmetrical configuration of the first kind, with parameters $(r,\rho)$,
then $\gamma_{r,\rho}$
is an isoparametric knot of
the first class, with spectral ratio $r$.
Moreover, the trajectory of $\gamma_{r,\rho}$ is a negative torus knot of type $(p,q)$, where
$p>0$ and $q<0$ are, respectively, the numerator and the denominator of $\frac{2+r}{1+2r}$.
The trajectory of $\gamma_{r,\rho}$ is contained in the standard Heisenberg cyclide $\mathcal{T}_\rho$ with parameter $\rho$.
\end{prop}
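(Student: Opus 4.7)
The plan proceeds in three main stages, all exploiting the fact that $\gamma_{r,\rho}$ is the orbit through $S(\rho)$ of the one-parameter subgroup $A_s := \mathcal{U}\,\mathrm{Exp}(s D_1(r,\rho))\,\mathcal{U}^{-1}$ of $G$. Membership $A_s \in G$ follows because the columns of $\mathcal{U}$ form a pseudo-unitary basis of $\C^{2,1}$ (so $\bar{\mathcal{U}}^t\,\mathbf{h}\,\mathcal{U}$ is a diagonal pseudo-Hermitian form), and the diagonal unitary $\mathrm{Exp}(sD_1)$ preserves every such form. The self-congruence $\gamma_{r,\rho}(s + t) = A_s \cdot \gamma_{r,\rho}(t)$ forces the CR-invariant strain density, bending and twist to be constant along $\gamma_{r,\rho}$, so the curve is isoparametric. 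Since the Hamiltonian of $\gamma_{r,\rho}$ is conjugate to $iD_1 = \mathrm{diag}(e_1^2, e_1^3, e_1^1)$, the spectral ratio is $e_1^1/e_1^3 = r$ by construction.

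To identify the class, I would compute the causal characters of the eigenspaces of the Hamiltonian; they are spanned by the columns $U_1, U_2, U_3$ of $\mathcal{U}$, corresponding to the middle, largest and smallest eigenvalues respectively. The same computation that yielded $\bar{\mathcal{U}}^t\,\mathbf{h}\,\mathcal{U}$ already gives $\langle U_1, U_1\rangle = -1$ (timelike) and $\langle U_2, U_2\rangle = \langle U_3, U_3\rangle = +1$ (spacelike); since the middle eigenvalue $e_1^2$ thus has a timelike eigenspace, Proposition \ref{3.2.3}(2) places $\gamma_{r,\rho}$ in the first class.

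The heart of the argument is to show that $A_s$ actually lies in the maximal torus $\mathrm{T}^2$ and to identify the knot type. A direct multiplication and comparison with the explicit form of $R(\theta_1, \theta_2)$ in \eqref{torus} gives $A_s = R(\theta_1(s), \theta_2(s))$ for affine functions $\theta_1(s) = s(e_1^1 - e_1^2)$ and $\theta_2(s) = s(e_1^1 + 2 e_1^2)$, so the trajectory is contained in $\mathrm{T}^2 \cdot S(\rho) = \mathcal{T}_\rho$. Under the parametrization \eqref{elp}, the Clifford coordinates $(\theta_1, \theta_2)$ are genuine angular coordinates of period $(2\pi, 2\pi)$ on $\mathcal{T}_\rho$, so $\gamma_{r,\rho}$ becomes a straight line on the torus with direction proportional to $(1 + 2r,\,-(2+r))$, obtained from $e_1^1 = r e_1^3$ and $e_1^2 = -(1+r)e_1^3$. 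Reducing this direction to primitive integer components identifies the trajectory as an embedded $(p, q)$ torus curve, where $(2+r)/(1+2r) = p/q$ in lowest terms; the range $r \in (-2,-1/2)$ forces $p > 0$ and $q < 0$, giving a negative torus knot.

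The main technical obstacle will be the period-minimality step: one must verify that the fundamental period of the closed orbit realises the primitive winding vector on $\mathcal{T}_\rho$ (so that the knot is really of type $(p, q)$ and not a multiple cover of a simpler torus knot). This entails a small number-theoretic check on $\gcd(2Q + P,\, Q + 2P)$ when $r = P/Q$ in lowest terms (this gcd divides $3$), together with confirming that the isotropy of $S(\rho)$ inside $\mathrm{T}^2$ is trivial for regular Clifford parameters $\rho \in (0, \sqrt{2})$.
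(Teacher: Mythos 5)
Your proposal is correct and follows essentially the same route as the paper: exploit the orbit structure under the one-parameter subgroup $\mathcal{B}(s)=\mathcal{U}\,\mathrm{Exp}(sD_1)\,\mathcal{U}^{-1}$, read off the spectrum of the Hamiltonian from $iD_1$, determine the class from the causal characters of the eigenspaces via Proposition \ref{3.2.3} (your computation that $U_1$ is timelike and $U_2,U_3$ spacelike, with $U_1$ attached to the middle eigenvalue, is exactly what the paper uses), and identify the knot type through the Clifford angles $\theta_1(s)=(e_1^1-e_1^2)s$, $\theta_2(s)=(e_1^1+2e_1^2)s$, which agree with the paper's $\theta_1=\frac{(1+2r)\mu_1}{1-\mu_1}s$, $\theta_2=-\frac{(2+r)\mu_1}{1-\mu_1}s$. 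The one point you elide is that the paper does \emph{not} get the isoparametric property purely by homogeneity: it writes down the explicit lift $\Gamma_{r,\rho}=-\sigma_1\mathcal{B}(s)S(\rho)$ and checks $\det(\Gamma_{r,\rho},\Gamma_{r,\rho}',\Gamma_{r,\rho}'')=-1$ and $-i\langle\Gamma_{r,\rho},\Gamma_{r,\rho}'\rangle=1$. This is what establishes that the curve is transversal and generic (so that bending and twist are even defined) and that $s$ is the natural parameter — indeed the specific formulas \eqref{mu1-first-kind} and \eqref{sigma1-first-kind} for $\mu_1$ and $\sigma_1$ exist precisely to make these two normalizations hold. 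Your homogeneity argument reduces this to a check at a single point, but that one-point check (nonvanishing of $i\langle\Gamma,\Gamma'\rangle$ and of $\det(\Gamma,\Gamma',\Gamma'')$) still has to be made and should be stated. Your closing worries are legitimate but benign: the gcd you identify divides $3$ and is responsible for the spin-$1/3$ phenomenon treated in Proposition \ref{3.3.3}, which affects the minimal period of the parametrization but not the knot type of the trajectory; the paper itself passes over this silently at this stage.
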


\begin{proof}
First, note that $\gamma_{r,\rho}$ is the orbit through $S(\rho)\in \S$ of the 1-parameter group
$$
   \mathcal{B}:\R\ni s\mapsto\left(\mathcal{U}\,\mathrm{Exp}(s D_1(r,\rho))\,\mathcal{U}^{-1}\right)\in{G},
     $$
where $D_1(r,\rho)$ is a diagonal matrix with purely imaginary eigenvalues $-ie_1^1(r,\rho)$, $-ie_1^2(r,\rho)$,
and $-ie_1^3(r,\rho)$, such that $e_1^1(r,\rho)/e_1^3(r,\rho)$, $e_1^2(r,\rho)/e_1^3(r,\rho)\in\Q$.
This implies that $\mathcal{B}$ is a periodic map. Consider the lift of $\gamma_{r,\rho}$,
$$
    \Gamma_{r,\rho}:\R\to\mathcal{N}\subset\C^{2,1},
       $$
        defined by
$$
   \Gamma_{r,\rho}(s)=-\sigma_1(r,\rho)\mathcal{B}(s)S(\rho),
     $$
where
\begin{equation}\label{sigma1-first-kind}
 \sigma_1(r,\rho)=\frac{2 (1- \mu_1(r, \rho))}{\mu_1(r, \rho)\sqrt[3]{\rho(-2- 3 r+3 r^2+2 r^3)(4-\rho^4)}}.
  \end{equation}
By elementary calculations, it follows that the components of the lift are given by
\begin{equation}\label{lift}
\begin{cases}
\Gamma_1= - \frac{\sigma_1}{4}e^{i\mu_1 \frac{(1+r) s}{ 1 - \mu_1} }\left(2
+\rho^2+ e^{-i\mu_1 \frac{(1+2r) s}{ 1 - \mu_1} }(2 - \rho^2)\right),\\
\Gamma_2=-\sigma_1\rho e^{-i \frac{\mu_1 s}{ 1 - \mu_1} },\\
\Gamma_3= - \frac{\sigma_1}{4}e^{i\mu_1 \frac{(1+r) s}{ 1 - \mu_1} }\left(2
+\rho^2- e^{-i\mu_1 \frac{(1+2r) s}{ 1 - \mu_1} }(2 - \rho^2)\right).\\
  \end{cases}
    \end{equation}
Thus, $\det(\Gamma_{r,\rho},\Gamma_{r,\rho}',\Gamma_{r,\rho}'') = -1$ and $-i\langle\Gamma_{r,\rho},\Gamma_{r,\rho}'\rangle = 1$,
which implies that $\gamma_{r,\rho}$ is a generic transversal curve parametrized by the natural parameter
and that $\Gamma_{r,\rho}$ is a W-lift along $\gamma_{r,\rho}$.
Since $\gamma_{r,\rho}$ is an orbit of a 1-parameter group of CR transformations, its bending and twist are constants.
This implies that $\gamma_{r,\rho}$
is an isoparametric string.
Let $\F$ be the Wilczynski frame along $\gamma_{r,\rho}$ with first column vector $\Gamma_{r,\rho}$
and $\tilde\F: \R\to{\rm  G}$  be the frame field along $\gamma_{r,\rho}$ defined by
$$
   \tilde\F:\R\ni s\mapsto\mathcal{B}(s)\mathfrak{s}(\rho)\in{G},
   $$
where
 $$
 \mathfrak{s}(\rho)=
\begin{pmatrix}
1&0&0\\
\rho&1&0\\
\frac{i\rho^2}{2}&i\rho&1\\
\end{pmatrix}
\in{G}.
$$
Since the bending and the twist of $\gamma_{r,\rho}$ are constant, then $\F^{-1}\F'$ is a constant
element $K$ of the Lie algebra $\mathfrak{g}$. By construction,
$\tilde\F^{-1}\tilde\F' = \mathfrak{s}(\rho)^{ -1}\mathcal{B}(s)\mathfrak{s}(\rho)$ is also constant.
Therefore, there exists an element $B$ of the structure group ${G}_0$ of the
Chern--Moser bundle such that $\F=\tilde\F B$.
The Hamiltonian of $\gamma_{r,\rho}$
is then given by
$$
  {\rm H}_{r,\rho} = iB^{ -1} \mathfrak{s}(\rho)^{ -1}\mathcal{U}D_1(r,\rho)\mathcal{U}^{ -1}\mathfrak{s}(\rho) B.
    $$
This implies that $e_1^1 (r,\rho)<e_1^2 (r,\rho)<e_1^3 (r,\rho)$ is the spectrum of ${\rm H}_{r,\rho}$,
which proves that $r$ is the spectral ratio of  $\gamma_{r,\rho}$.
In addition, the corresponding eigenspaces are spanned, respectively, by the vectors
$$
 \begin{cases}
V_1=B^{ -1} \mathfrak{s}(\rho)^{ -1}\mathcal{U}_ 3,\\
V_2=B^{ -1} \mathfrak{s}(\rho)^{ -1}\mathcal{U}_1, \\
V_3=B^{ -1} \mathfrak{s}(\rho)^{ -1}\mathcal{U}_2, \\
\end{cases}
$$
where $\mathcal{U}_ 1$, $\mathcal{U}_ 2$, and $\mathcal{U}_ 3$ are the column vectors of
a pseudo-unitary basis $\mathcal{U}$.
Taking into account that $\mathfrak{s}(\rho) B\in G$,
we conclude that $V_1$ and $V_3$ are spacelike an that $V_2$ is timelike.
By Proposition \ref{3.2.3}, it follows that $\gamma_{r,\rho}$ is an isoparametric string of the first class.

Consider the parametrization of the standard Heisenberg cyclide $\mathcal{T}_\rho$ in terms
of the Clifford angles $\theta_1$ and $\theta_2$ (cf. \eqref{torus}). Then the parametric equations of
$\gamma_{r,\rho}$ can be written as
$$
  \theta_1(s)=\frac{(1+2 r) \mu_1(r,\rho)}{1-\mu_1(r,\rho)}s,\quad  \theta_2(s)=-\frac{(2+ r) \mu_1(r,\rho)}{1-\mu_1(r,\rho)}s.
   $$
Since the elliptical profile is counterclockwise-oriented, then $\gamma_{r,\rho}$ is a torus knot of
type $(p,q)$, where $p,q$ are relatively prime integers, such that
$$
   \frac{p}{q}=-\frac{\theta_2(s)}{\theta_1(s)}=\frac{2+r}{1+2r}.
     $$
\end{proof}

\begin{figure}[h]
\begin{center}
\includegraphics[height=6.2cm,width=6.2cm]{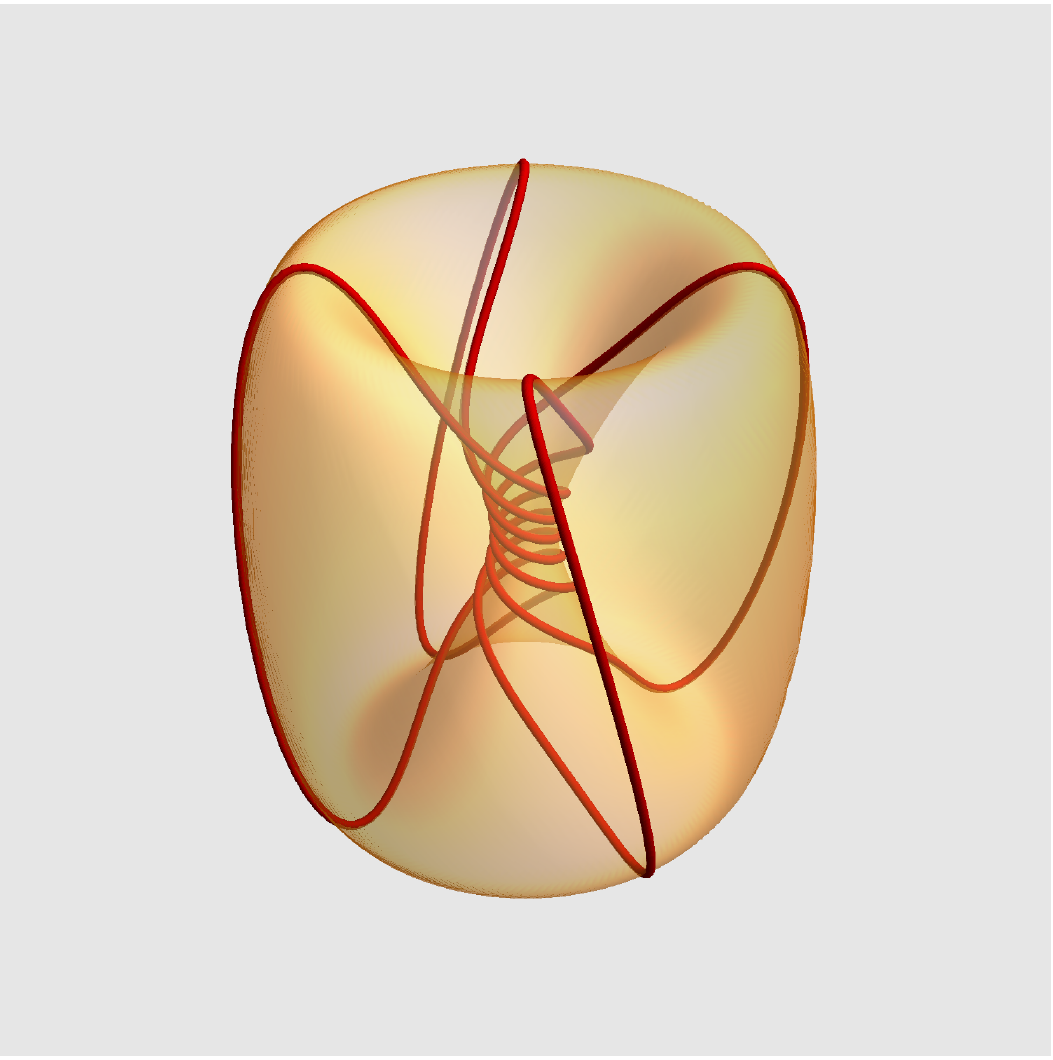}
\caption{\small{The trajectories of the Heisenberg projections of the symmetrical configuration $\gamma_{r,\rho}$,
with $r = -5/6$ and $\rho = 0.47343$. The torus knot has spin 1 and is of type $(7,-4)$.}}\label{FIG6}
\end{center}
\end{figure}

\begin{remark}\label{period}
Recalling that $\kappa =12\langle \Gamma',\Gamma'\rangle$ and $\tau = {\Im}(\langle\Gamma'',\Gamma'\rangle) + 3\kappa^2$,
 it follows from \eqref{lift} that the bending and the twist of $\gamma_{r,\rho}$ are given, respectively, by
{\scriptsize$$
 \kappa_{r,\rho}=\frac{-8 r^2\rho^2-(-2+\rho^2)^2-2 r (2+\rho^2)^2}{4 ((2+3 r-3 r^2-2 r^3)\rho(-4+\rho^4))^{2/3}},
  $$}
and
 {\scriptsize$$
\tau_{r,\rho}=\frac{9\left((8 r^2 \rho^2 + (-2 + \rho^2)^2 +
  2 r(2 + \rho^2)^2\right)^2-4 (1 + r + r^2)\left(4 + 12 \rho^2
   + \rho^4 + 2 r (4 + \rho^4)\right)^2}{16 ((2+3 r-3 r^2-2 r^3)\rho(-4+\rho^4))^{4/3}}.
     $$}
The minimal period of the W-lift $\Gamma_{r,\rho}$ is
{$$
   \omega_{r,\rho}=\frac{2\pi(1-\mu_1(r,\rho))}{\mu_1(r,\rho)}\mathrm{denominator}(r).
   $$}
\end{remark}

\begin{prop}\label{3.3.3}
Let $\K_{r,\rho}$ be the trajectory of $\gamma_{r,\rho}$, a symmetrical configuration of the first kind,
with spectral {parameter} $r = \frac{m}{n} \in(-2,-1/2)$, ${\rm gdc}(m,n) = 1$, $m<0$,
and Clifford parameter $\rho$.
The following hold true:
\begin{enumerate}
\item $\K_{r,\rho}$ has spin $1/3$ and phase anomaly $4\pi/3$ if and only if there exist $h,k\in \mathbb Z$, $h <0$, $k>0$,
such that $m = 1 + 3h$ and $n = 1+3k$;
\item $\K_{r,\rho}$ has spin $1/3$ and phase anomaly $2\pi/3$ if and only if there exist $h,k\in \mathbb Z$, $h <0$, $k>0$,
such that $m = 2 + 3h$ and $n = 2 + 3k$.
\end{enumerate}
\end{prop}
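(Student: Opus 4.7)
The plan is to compute directly the monodromy factor of the W-lift $\Gamma_{r,\rho}$ displayed in the proof of Proposition \ref{3.3.2} and to read off both the CR spin and the phase anomaly from its value. Set $\nu:=\mu_1(r,\rho)/(1-\mu_1(r,\rho))$. The explicit expressions give
$$
\Gamma_2(s)=-\sigma_1\rho\, e^{-i\nu s},
$$
while $\Gamma_1$ and $\Gamma_3$ are linear combinations of $e^{i(1+r)\nu s}$ and $e^{-ir\nu s}$. The Clifford-angle description $\theta_1(s)=(1+2r)\nu s$, $\theta_2(s)=-(2+r)\nu s$ from Proposition \ref{3.3.2} will be used to pin down the minimal period $\omega$ of $\gamma_{r,\rho}$.

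First I would determine $\omega$. Writing $r=m/n$ in lowest terms with $m<0<n$, the identities $(2n+m)-2(n+2m)=-3m$ and $2(2n+m)-(n+2m)=3n$ imply that $d:=\gcd(2n+m,\,n+2m)$ divides $3$, and that $d=3$ iff $m\equiv n\pmod 3$; coprimality of $m,n$ then forces $m\equiv n\equiv 1$ or $m\equiv n\equiv 2\pmod 3$. Since $(p,q)=((2n+m)/d,\,(n+2m)/d)$ are coprime, $\gamma_{r,\rho}$ first closes up when $(pd/n)\nu\omega$ and $(qd/n)\nu\omega$ both lie in $2\pi\Z$, which yields $\omega=2\pi n/(d\nu)$.

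Next I would evaluate $\varepsilon_\gamma$, the constant cube root of unity characterized by $\Gamma_{r,\rho}(\omega)=\varepsilon_\gamma\Gamma_{r,\rho}(0)$; constancy of the scaling is automatic from $\det(\Gamma,\Gamma',\Gamma'')=-1$. Inspecting $\Gamma_2$ gives
$$
\varepsilon_\gamma=e^{-i\nu\omega}=e^{-2\pi i n/d}.
$$
If $d=1$, this equals $1$, so the spin is $1$ and the phase anomaly vanishes. If $d=3$, then $\varepsilon_\gamma$ is a nontrivial cube root of unity and the spin is $1/3$. Distinguishing by $n\bmod 3$: the case $n\equiv 1\pmod 3$ gives $\varepsilon_\gamma=e^{-2\pi i/3}$, whose argument in $[0,2\pi)$ is $4\pi/3$, matching case (1); the case $n\equiv 2\pmod 3$ gives $\varepsilon_\gamma=e^{2\pi i/3}$, of argument $2\pi/3$, matching case (2). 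The sign restrictions $h<0$, $k>0$ on $m=1+3h$, $n=1+3k$ (and analogously in (2)) are automatic from $m<0$, $n>0$, together with the fact that $r\in(-2,-1/2)$ excludes the exceptional small values $n=1$ in case (1) and $n=2$ in case (2).

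The main obstacle is the arithmetic bookkeeping in identifying the divisor $d$ and using it to show that the minimal period of $\gamma_{r,\rho}$ is $1/d$ times the naive period $2\pi n/\nu$ of the 1-parameter subgroup generated by $\mathrm K_{\kappa,\tau}$. Once $\omega$ is pinned down, the phase computation is just inspection of the exponentials in the explicit lift.
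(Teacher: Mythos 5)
Your argument is correct and follows essentially the same route as the paper: both rest on the explicit W-lift \eqref{lift}, determine the minimal period of the base curve from the two Clifford angular frequencies, and read the CR spin and phase anomaly off the resulting monodromy factor, reducing everything to the congruence classes of $m$ and $n$ modulo $3$. Your bookkeeping via $d=\gcd(2n+m,\,n+2m)\in\{1,3\}$ is a tidier way of running the arithmetic (and handles both directions of the equivalence at once) than the paper's case analysis on $\sin\bigl(\tfrac{2}{3}(2m+n)\pi\bigr)=0$, and your monodromy $e^{-2\pi i n/d}$ agrees with the paper's $e^{4\pi i m/3}$ because $m\equiv n \pmod 3$ when $d=3$; the underlying mechanism is the same.
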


\begin{proof}
Using the Heisenberg chart, the natural parametrization of $\K_{r,\rho}$ is given by  $\gamma_{r,\rho}(s)=(x_{r,\rho}(s),y_{r,\rho}(s),z_{r,\rho}(s))$, where
\begin{equation}\label{gamma}
\begin{cases}
x_{r,\rho}(s)=2 \rho\frac{ (2 +\rho^2) \cos(A_{r,\rho} s)
  -(-2 + \rho^2) \cos(B_{r,\rho}s) }{4 + \rho^4 - (-4 + \rho^4) \cos(C_{r,\rho} s)} \\
y_{r,\rho}(s)=2 \rho\frac{ (2 +\rho^2) \sin(B_{r,\rho} s)
  -(-2 + \rho^2) \sin(A_{r,\rho}s) }{4 + \rho^4 - (-4 + \rho^4) \cos(C_{r,\rho} s)}\\
z_{r,\rho}(s)=\frac{(\rho^4-4) \sin(C_{r,\rho} s) }{4 + \rho^4 - (-4 + \rho^4) \cos(C_{r,\rho} s)}\\
\end{cases}
\end{equation}
and where
$$
\begin{cases}
A_{r,\rho}=(2+ r)\frac{ \mu_1(r, \rho)}{1-\mu_1(r,\rho)},\\
B_{r,\rho}=(1-r)\frac{ \mu_1(r, \rho)}{1-\mu_1(r,\rho)},\\
C_{r,\rho}=(1+2r)\frac{ \mu_1(r, \rho)}{1-\mu_1(r,\rho)}.\\
\end{cases}
$$
Let $r = m/n$, where $m, -n\in\Z_+$, ${\rm gdc}(m,n) = 1$ and $-2 < m/n<-1/2$.
Then, $\gamma_{r,\rho}(0) = (\rho, 0,0)$ and the coordinates of  $\gamma_{r,\rho}(\omega_{ r,\rho}/3)$ are
$$
 \begin{cases}
x_{r,\rho}(\omega_{ r,\rho}/3)=-2 \rho\frac{ (-2 + \rho^2)\cos(2(m - n) \pi/3)-(2 + \rho^2) \cos(2(m +2 n) \pi/3) }{4 + \rho^4 - (-4 + \rho^4) \cos(2(2m + n) \pi/3)} \\
y_{r,\rho}(\omega_{ r,\rho}/3)=-2 \rho\frac{ (2 +\rho^2) \sin(2(m +2 n) \pi/3)+(-2 + \rho^2) \sin(2(m - n) \pi/3) }{4 + \rho^4 - (-4 + \rho^4) \cos(2(2m + n) \pi/3)}\\
z_{r,\rho}(\omega_{ r,\rho}/3)=\frac{(\rho^4-4) \sin(2(2m + n) \pi/3) }{4 + \rho^4 - (-4 + \rho^4) \cos(2(2m + n) \pi/3)}\\
\end{cases}
$$
Suppose that $\gamma_{r,\rho}$ has spin $1/3$ (i.e., $\gamma_{r,\rho}$ has minimal period $\omega_{ r,\rho}/3$).
Then $\gamma_{r,\rho}(0) =\gamma_{r,\rho}(\omega_{ r,\rho}/3)$, which implies
$$
  \sin\Big(\frac23(2 m + n) \pi\Big) = 0.
   $$
Thus, either $n = 3\tilde k- 2m > 0$, or $n = 3/2+ 3 \tilde  k- 2 m$, for some integer $\tilde k$.
In the latter
case, $\gamma_{r,\rho}(\omega_{r,\rho}/3)= (2/\rho,0,0) \neq \gamma_{r,\rho}(0)$. Therefore,
only the first case may occur.
Hence
$$
  n = 3\tilde k- 2m.
    $$
By using \eqref{lift}, we can compute a W-lift $\Gamma_{r,\rho}$ of $\gamma_{r,\rho}$.
It follows that
$$
  \Gamma_{r,\rho}(\omega_{ r,\rho}/3)=e^{i\frac{4m\pi}{3}}\Gamma_{r,\rho}(0).
   $$

Suppose that the phase anomaly of $\K_{r,\rho}$ is $4\pi/3$. Then there exists an integer $\tilde  h$,
such that  $m =  1+3 \tilde h/{2}$. From this it follows that $\tilde h$ is even and negative.
Putting $\tilde h= 2h<0$, we get
$m = 1 + 3h$ and $ n = 1 + 3k$, where $k =\tilde  k- 2h - 1$.

If the phase anomaly of $\K_{r,\rho}$ is $2\pi/3$, then there exists an integer $\tilde  h$, such
that $m =  1/2+3 \tilde h/{2}$. From this it follows that $\tilde h$ is odd and negative.
Putting $\tilde h= 2h+1$, we get
$m = 2+ 3h$ and $ n = 2+ 3k$, where $k =\tilde  k- 2h -2$.

Suppose that $r =\frac{1+ 3 h}{1 + 3 k}$. From \eqref{gamma}, it follows that $\gamma_{r,\rho}$
has minimal period $\omega_{r,\rho}/3$, i.e., has spin $1/3$. Moreover, by using \eqref{lift},
we get that the phase anomaly of $\K_ {r,\rho}$ is $4\pi/3$.

Suppose $r =\frac{2+ 3 h}{2 + 3 k}$. From \eqref{gamma}, it follows that $\gamma_{r,\rho}$
has minimal period $\omega_{r,\rho}/3$, i.e., has spin $1/3$. In addition,
by using \eqref{lift} we get that the phase anomaly of $\K_ {r,\rho}$ is $2\pi/3$.
\end{proof}

\begin{prop}\label{3.3.4}
Let $\K_{r,\rho}$ be the trajectory of a symmetrical configuration of the first kind, with spectral
{parameter}
$r = m/n\in(-2,-1/2)$, ${\rm gdc}(m,n) = 1$, and $m<0$. Then, its Maslov index is $-(n+m)$.
\end{prop}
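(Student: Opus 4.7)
The plan is to compute $\mu_{\mathcal{K}_{r,\rho}}$ directly from Definition \ref{def:maslov} by exhibiting the winding number of $\chi = \Gamma_1 - i\Gamma_3$ on the explicit W-lift \eqref{lift} produced in the proof of Proposition \ref{3.3.2}. Taking the difference of the first and third components there and factoring out the common exponential gives
\[
\chi(s) \;=\; -\tfrac{\sigma_1}{4}\, e^{i\mu_1(1+r)s/(1-\mu_1)}\Bigl[(2+\rho^2)(1-i) \;+\; (2-\rho^2)(1+i)\,e^{-i\mu_1(1+2r)s/(1-\mu_1)}\Bigr],
\]
so $\chi$ is the product of a pure exponential and an affine expression in a second exponential.

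The next step is to observe that the bracketed factor has winding number zero around the origin. Writing it as $A + B\,e^{i\phi(s)}$ with $A=(2+\rho^2)(1-i)$ and $B=(2-\rho^2)(1+i)$, one has $|A|=\sqrt{2}(2+\rho^2)$ and $|B|=\sqrt{2}(2-\rho^2)$; the constraint $\rho\in(0,\sqrt{2})$ forces $|A|>|B|>0$, so $s\mapsto A + B e^{i\phi(s)}$ traces (possibly multiply covered) a circle of radius $|B|$ centered at $A$ that does not encircle $0$. Hence its contribution to the winding of $\chi$ around the origin vanishes, and furthermore $\chi$ is nowhere zero.

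Consequently the winding of $\chi$ around $0$, over one period, coincides with that of the leading exponential $e^{i\mu_1(1+r)s/(1-\mu_1)}$. Using the minimal period $\omega_{r,\rho} = 2\pi(1-\mu_1)\,n/\mu_1$ of $\Gamma_{r,\rho}$ from Remark \ref{period} (which is a period of $\chi$ in either spin case), this winding equals
\[
\frac{1}{2\pi}\cdot\frac{\mu_1(1+r)}{1-\mu_1}\cdot\omega_{r,\rho} \;=\; (1+r)\,n \;=\; n+m.
\]
Since the general identity $\tfrac{i}{2\pi}\oint \chi^{-1}\,d\chi = -\,\mathrm{wind}(\chi)$ (a computation for $\chi=e^{i\theta}$) yields the sign $-1$, one concludes $\mu_{\mathcal{K}_{r,\rho}}=-(n+m)$, as claimed.

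No serious obstacle is expected: the proof is essentially a direct calculation on a closed-form lift. The only delicate point is the circle argument for the bracketed term, where the strict inequality $\rho^2<2$ (ensured by the defining domain $\A$ in \eqref{A}) is exactly what guarantees that the origin lies outside the swept circle and thus that no spurious winding is created.
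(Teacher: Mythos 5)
Your proof is correct and follows essentially the same route as the paper: both compute the winding of $\chi=\Gamma_1-i\Gamma_3$ directly from the explicit W-lift \eqref{lift} over the period $\omega_{r,\rho}=2\pi(1-\mu_1)n/\mu_1$ of Remark \ref{period}, obtaining $\mu=-(1+r)n=-(n+m)$. If anything you are more careful than the paper, which simply asserts that $\tfrac{i}{2\pi}\chi^{-1}d\chi$ equals a constant; your observation that the bracketed factor $A+Be^{i\phi(s)}$ stays in a disk avoiding the origin because $|A|=\sqrt{2}(2+\rho^2)>|B|=\sqrt{2}(2-\rho^2)$, so that only the leading exponential contributes to the winding, is exactly the justification the paper leaves implicit.
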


\begin{proof}
According to Definition \ref{def:maslov}, the Maslov index of a generic transversal knot is equal to
$$
  \mu =  \frac{i}{2\pi}\int_0^{\omega}\frac{d\chi}{\chi},
     $$
where $\chi=\Gamma_1-i\Gamma_3$ and $\omega$ is the minimal period of a W-lift $\Gamma_{r,\rho}$.
From \eqref{lift}, it follows that
\[
 \frac{i}{2\pi}\frac{d\chi}{\chi} = \frac{(1+r)\mu_1(r,\rho)}{2\pi(\mu_1(r,\rho) - 1)}.
 \]
Using the expression of the minimal period $\omega$ given in Remark \ref{period},
we have
\[
\mu = -(1+r)\mathrm{denominator}(r) = -(n+m),
\]
as claimed.\end{proof}

\begin{remark}
The Bennequin number of $\K_{r,\rho}$ can be estimated via a numerical evaluation of the
Gaussian linking integral of $\gamma_{r,\rho}$ with the contact push-off $\gamma_{r,\rho} +\epsilon \xi\circ\gamma_{r,\rho}$
of $\K_{r,\rho}$
in the direction of the vector field $\xi = \partial_x + y\partial_z$.
The numerical experiments give convincing support
{for supposing that}
$$
  \beta(\K_{r,\rho})=pq+p+q.
    $$
\end{remark}

\begin{remark}
The CR invariant moving trihedron of $\K_{r,\rho}$ can be explicitly computed via
the procedure explained in Remark \ref{2.2.5}. The self-linking number of $\K_{r,\rho}$ can be
estimated via a numerical evaluation of the Gaussian linking integral of $\gamma_{r,\rho}$
with the push-off of $\gamma_{r,\rho}$ in the direction of the CR normal vector field $\vec N$
along $\gamma_{r,\rho}$. The numerical experiments give support
{for supposing that}
$$
   {\rm SL}(\K_{r,\rho})=pq.
     $$
\end{remark}

We now prove the following.

\begin{prop}\label{3.3.5}
Any isoparametric string of the first class is congruent to a symmetrical configuration of the first kind.
\end{prop}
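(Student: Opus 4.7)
The plan is to combine the rigidity Theorem \ref{2.2.4} with a one-variable surjectivity argument. Let $\gamma$ be an isoparametric string of the first class with bending $\kappa$, twist $\tau$, and rational spectral ratio $r = e_1/e_3 \in (-2,-1/2)$. By Proposition \ref{3.3.2}, every symmetrical configuration $\gamma_{r,\rho}$ is also isoparametric of the first class with spectral ratio $r$. Since both $\gamma$ and $\gamma_{r,\rho}$ have constant bending and twist, Theorem \ref{2.2.4} reduces the claim to the existence of $\rho$ with $(r,\rho)\in\A$ such that $\kappa_{r,\rho}=\kappa$ and $\tau_{r,\rho}=\tau$. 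Because the spectral ratio of $\gamma_{r,\rho}$ already equals $r$, the set of admissible $(\kappa,\tau)\in\mathcal{P}_+$ with this spectral ratio is a smooth one-parameter family, naturally parametrized by the largest eigenvalue $e_3>0$ of the Hamiltonian (with $e_1=re_3$ and $e_2=-(1+r)e_3$ then determined).

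The technical core of the proof is therefore the bijectivity of
\[
\rho \longmapsto e_1^3(r,\rho) \;=\; \frac{\mu_1(r,\rho)}{1-\mu_1(r,\rho)}
\]
from the admissible interval $(0,\rho_{\max}(r))$, with $\rho_{\max}(r)=\sqrt{2(-3+2\sqrt{2-r-r^2})/(1+2r)}$ as in \eqref{A}, onto $(0,\infty)$. I would verify this from the explicit expression \eqref{mu1-first-kind}: a boundary analysis gives $\mu_1(r,\rho)\to 0^+$ as $\rho\to 0^+$ (so $e_1^3\to 0$) and $\mu_1(r,\rho)\to 1^-$ as $\rho\to\rho_{\max}(r)^-$ (so $e_1^3\to\infty$), and a direct monotonicity check in $\rho$ completes the bijection. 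Given this, the unique $\rho$ with $e_1^3(r,\rho)=e_3$ makes the Hamiltonian spectra, and hence the bending and twist, of $\gamma$ and $\gamma_{r,\rho}$ agree, so $\gamma\cong\gamma_{r,\rho}$ by Theorem \ref{2.2.4}.

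A more constructive alternative, avoiding the explicit analysis of $\mu_1$, proceeds by direct diagonalization. By Proposition \ref{3.2.3}, the eigenspaces $\mathbb{V}_1,\mathbb{V}_3$ of $\mathrm{H}_{\kappa,\tau}$ are spacelike and $\mathbb{V}_2$ is timelike; I would choose unit eigenvectors $B_1\in\mathbb{V}_2$, $B_2\in\mathbb{V}_3$, $B_3\in\mathbb{V}_1$ forming a unimodular pseudo-unitary basis with the same causal signature as the reference basis given by the columns of $\mathcal{U}$ from \eqref{symm-conf1}. The element $A\in G$ sending the $i$-th column of $\mathcal{U}$ to $B_i$ conjugates $\mathrm{K}_{\kappa,\tau}$ to $\mathcal{U}\,\mathrm{diag}(-ie_2,-ie_3,-ie_1)\,\mathcal{U}^{-1}$, so $A^{-1}\gamma$ is the orbit through some $P\in\S$ of the 1-parameter group $\mathcal{U}\mathrm{Exp}(sD_1(r,\rho))\mathcal{U}^{-1}$ with $\rho$ chosen as in the previous paragraph. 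Since this 1-parameter group is centralized by the full maximal torus $\mathcal{U}\mathrm{T}^2\mathcal{U}^{-1}$, which acts transitively on each standard Heisenberg cyclide, an appropriate torus element moves $P$ to the slice representative $S(\rho)$ without affecting the 1-parameter group, identifying $A^{-1}\gamma$ with $\gamma_{r,\rho}$. The principal obstacle in either approach is the simultaneous matching of the Hamiltonian scale and the initial-point parameter; the first approach handles this cleanly via Theorem \ref{2.2.4}, reducing everything to the monotonicity analysis of $\mu_1(r,\cdot)$.
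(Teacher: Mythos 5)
Your second, ``constructive'' alternative is essentially the route the paper takes: diagonalize $\mathrm H_{\kappa,\tau}$ using the causal characters from Proposition \ref{3.2.3} to build a unimodular pseudo-unitary eigenbasis $\mathcal{W}=\mathcal{B}\mathcal{U}$, conjugate the curve by $\mathcal{B}^{-1}$ so that it becomes an orbit of a one-parameter subgroup of the standard torus $\T$, and then use an element of $\T$ to move the initial point into the slice $\Sigma$. However, both of your variants hinge on a step that is a genuine gap: you determine $\rho$ by solving $e_1^3(r,\rho)=e_3$ and then claim that matching the spectra forces $\kappa_{r,\rho}=\kappa$ and $\tau_{r,\rho}=\tau$. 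This does not follow. The spectrum determines only the symmetric functions $\tau-3\kappa^2=\sigma_2$ and $2\kappa^3+2\kappa\tau-1=\sigma_3$, and eliminating $\tau$ leaves the cubic $8\kappa^3+2\sigma_2\kappa=1+\sigma_3$ with $\sigma_2=-\tfrac12(e_1^2+e_2^2+e_3^2)<0$, which can have up to three real roots; so a given spectrum may correspond to several distinct pairs $(\kappa,\tau)$, and ``same spectrum, hence same bending and twist'' is unjustified. The same gap reappears in your second variant: the torus element can only move the initial point $P$ within the cyclide $\mathcal{T}_{\rho'}$ that actually contains it, and you would still have to show that this geometric $\rho'$ coincides with the $\rho$ you selected by spectral matching.

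The paper closes this loop in the opposite order, and that is the idea your proposal is missing: it lets the initial point $[\mathcal{B}^{-1}E_1]$ determine $\rho$ (as the unique parameter of the standard cyclide through it, with $\rho\neq 0,\sqrt2$ because the curve is not a chain), and then shows that the Wilczynski normalizations $\det(\Gamma,\Gamma',\Gamma'')=-1$ and $\langle\Gamma,\Gamma'\rangle=i$ \emph{force} the scale to be $m=\mu_1(r,\rho)$ and the multiplicative constant to be $-\sigma_1(r,\rho)$. No surjectivity or monotonicity of $\rho\mapsto e_1^3(r,\rho)$ is needed. Incidentally, your boundary analysis of $\mu_1$ is also reversed: from \eqref{mu1-first-kind} one gets $\mu_1(r,\rho)\to 1^-$ as $\rho\to 0^+$ (the correction term in the denominator vanishes while the numerator tends to $4(1+2r)\neq 0$) and $\mu_1(r,\rho)\to 0^+$ as $\rho\to\rho_{\max}(r)^-$ (where the numerator vanishes), so $e_1^3\to\infty$ and $e_1^3\to 0$ respectively; and the asserted monotonicity is left unverified. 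To repair your argument, replace the spectral-matching determination of $\rho$ by the paper's normalization computation.
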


\begin{proof}
Let $\gamma : \R\to\S$ be an isoparametric string of the first class, with curvature
$\kappa$ and torsion $\tau$. Let $\F : \R\to{G}$ be a Wilczynski frame along $\gamma$.
Without loss of generality, we may assume that
$\F(0) = I_{3}$. Let $e_1 < e_2 < e_3$ be the spectrum of the Hamiltonian $\rm H$ and write
$$
   e_3 =\frac{m}{m-1},\ e_2 = -(1+r) \frac{m}{m-1},\ e_1 = r \frac{m}{m-1},
     $$
where $m\in (0,1)$ and $r \in (-2,-1/2)\cap\Q$ is the spectral ratio.
Let $\|V\|^2 = \langle V, V\rangle$ be the pseudo-norm of $\C^{2,1}$ and let $V_j(\kappa,\tau)$
the eigenvector of $\rm H$ relative to the eigenvalues $e_j$ defined by \eqref{eigen}.
We recall that $V_1(\kappa,\tau)$ and $V_3(\kappa,\tau)$ are spacelike, while $V_2(\kappa,\tau)$ is timelike.
Then,
$$
  \mathcal{W}(\kappa,\tau)=\left(\frac{V_2(\kappa,\tau) }{\|V_2(\kappa,\tau) \|},i\frac{V_3(\kappa,\tau) }{\|V_3(\kappa,\tau) \|}, \frac{V_1(\kappa,\tau) }{\|V_1 (\kappa,\tau)\|}  \right)
      $$
      is a unimodular pseudo-unitary basis of $\C^{2,1}$, such that
$$
  {\rm K}_{\kappa,\tau}=\mathcal{W}(\kappa,\tau)\Delta_1(m,r)\mathcal{W}(\kappa,\tau)^{-1},
    $$
where
$$
{\rm K}_{\kappa,\tau}=
\begin{pmatrix}
i\kappa& -i& \tau\\
0&-2i\kappa&1\\
1&0&i\kappa\\
\end{pmatrix}
$$
and
$$
\Delta_1(m,r)=
\begin{pmatrix}
-ie_2&0&0\\
0&-ie_3&0\\
0&0&-ie_1\\
\end{pmatrix}
$$
Let $\mathcal{B}\in{G}$ the light-cone basis of $\C^{2,1}$ such that
$$
  \mathcal{W}(\kappa,\tau)=\mathcal{B}\mathcal{U},
  $$
where
$$
 \mathcal{U}=
\begin{pmatrix}
\frac{1}{\sqrt2}&0&\frac{i}{\sqrt2}\\
0&1&0\\
\frac{i}{\sqrt2}&0&\frac{1}{\sqrt2}\\
\end{pmatrix}.
 $$
Then,
$$
  \F(s)=\mathcal{B}\mathcal{U}\,\mathrm{Exp}(s\Delta_1(m,r))\,\mathcal{U}^{-1}\mathcal{B}^{-1}.
    $$
The curve $\tilde\gamma= \mathcal{B}^{-1}\gamma$ is congruent to $\gamma$ and
\begin{equation}\label{*}
\tilde\Gamma: \R\ni s \mapsto\mathcal{U}\,\mathrm{Exp}(s\Delta_1(m,r))\,\mathcal{U}^{-1}\mathcal{B}^{-1}E_1 \in\mathcal{N}\subset\C^{2,1}
  \end{equation}
is a W-lift of $\gamma$. By construction, $\mathcal{U}\,\mathrm{Exp}(s\Delta_1(m,r))\,\mathcal{U}^{-1}$
belongs to the standard torus $\T\subset{G}$.
Therefore, there exists a unique $\rho\in[0, 2]$ and unique $R \in\T$ such that
\begin{equation}\label{**}
[\mathcal{B}^{-1}E_1]=R[S(\rho)].
\end{equation}
Note that $\rho \neq 0, \sqrt2$. Otherwise, $\tilde\gamma$ would be one of the
special orbits for the action of $\T$ on $\S$.
But this would imply that $\tilde\gamma$ is a chain, in contradiction with the hypothesis
that the curve {is generic}. By possibly replacing
$\tilde\gamma$ with $R\tilde\gamma$, it follows from \eqref{*} and \eqref{**} that the Wilczynsky lifts
of $\tilde\gamma$ can be written as
$$
  \tilde\Gamma(s)=\zeta \,\mathrm{Exp}(s\Delta_1(m,r))\,S(\rho),
    $$
where $\zeta$ is a nonzero complex number. From $\det(\tilde\Gamma,\tilde\Gamma',\tilde\Gamma'')$, we get
$$
  \zeta=\frac{2(1-m)}{\sqrt[3]{\rho(4-\rho^4)(2r^3+3r^2-3r-2)}}.
    $$
By requiring that $\langle\tilde\Gamma,\tilde\Gamma'\rangle=i$, it follows that $m = \mu_1(r,\rho)$
(cf. \eqref{mu1-first-kind})
and $\zeta = -\sigma_1(r,\rho)$ (cf. \eqref{sigma1-first-kind}). This implies that $\tilde\gamma$ is the natural parametrization
of the symmetrical configuration with spectral ratio $r$ and Clifford parameter $\rho$.
\end{proof}

\subsection{Symmetrical configurations of the second kind}

Let $\B$ be the rectangular domain $(-2,-1/2)\times(0, \sqrt2)$
and let $\mu_2 : \B\to (0,1)$ be the smooth function
{\scriptsize\begin{equation}
\mu_2(r,\rho)=\frac{-4 + 4 r - 12 \rho^2  (1 + 2  r) +(r-1)\rho^4}{-4 + 4 r - 12 \rho^2  (1 + 2  r) +(r-1)\rho^4 - 2 ((2 + 3 r - 3 r^2 - 2 r^3)(-4\rho + \rho^5))^{2/3}}.
   \end{equation}}
For each $(r,\rho)\in\B$, consider
{\small$$
e_2^1 (r,\rho) = r \frac{\mu_2 (r,\rho)}{ 1 - \mu_2 (r,\rho)} < e_2^2 (r,\rho)
= -(1+r) \frac{\mu_2 (r,\rho)}{ 1 -\mu_2 (r,\rho) }< e_2^3 (r,\rho) =\frac{\mu_2 (r,\rho)}{1 -\mu_2 (r,\rho)},
$$}
the diagonal matrix
$$
 D_2(r,\rho)=
\begin{pmatrix}
-ie_2^3(r,\rho)&0&0\\
0&-ie_2^2(r,\rho)&0\\
0&0&-ie_2^1(r,\rho)\\
\end{pmatrix},
$$
and the curve $\eta_{r,\rho} : \R \to \S$ defined by
\begin{equation}\label{symm-conf2}
    \eta_{r,\rho}:\R \ni s \mapsto \left(\mathcal{U}\,\mathrm{Exp}(sD_2(r,\rho))\,\mathcal{U}^{ -1}\right)S(\rho) \in\S,
      \end{equation}
      where, as above,
$
 \mathcal{U}=
\begin{psmallmatrix}
\frac{1}{\sqrt2}&0&\frac{i}{\sqrt2}\\
0&1&0\\
\frac{i}{\sqrt2}&0&\frac{1}{\sqrt2}\\
\end{psmallmatrix}
 $.

\begin{defn}
If $(r,\rho)$ is a point of $\B$, with $r\in\Q$, the curve $\eta_{r,\rho} : \R \to \S$
defined by \eqref{symm-conf2}
is called a \emph{symmetrical configuration} of the {\em second kind}, with parameters $(r,\rho)$.
The parameter $r$ (respectively, $\rho$) is referred to as the {\em spectral} (respectively, {\em Clifford})
parameter of the symmetrical configuration.
\end{defn}

\begin{prop}\label{3.4.2}
If $\eta_{r,\rho}$ is a symmetrical configuration of the second kind, with parameters $(r,\rho)$,
then $\eta_{r,\rho}$
is an isoparametric knot of the second class, with spectral ratio $r$.
The trajectory
$\E_{r,\rho}$
of $\eta_{r,\rho}$
is a positive torus knot of type $(p,q)$, where
the positive integers
$p$ and $q$ are, respectively, the numerator
and the denominator of $\frac{2 + r}{1 -r}$.
The trajectory
$\E_{r,\rho}$ is contained in the standard Heisenberg cyclide $\mathcal{T}_\rho$ with parameter $\rho$.
%
\end{prop}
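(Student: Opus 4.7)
The proof will closely parallel that of Proposition \ref{3.3.2}. First, $\eta_{r,\rho}$ is the orbit through $S(\rho)\in\S$ of the one-parameter subgroup $\mathcal{B}(s)=\mathcal{U}\,\mathrm{Exp}(sD_2(r,\rho))\,\mathcal{U}^{-1}\subset\G$; since $D_2(r,\rho)$ is diagonal with purely imaginary entries whose pairwise ratios are rational (the spectral ratio $r$ being rational), $\mathcal{B}$ is periodic and hence so is $\eta_{r,\rho}$. Being the orbit of a one-parameter subgroup of CR automorphisms, $\eta_{r,\rho}$ automatically has constant bending and twist, so to qualify it as an isoparametric string it suffices to exhibit a W-lift along it with natural parametrization. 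I would construct such a lift explicitly as
$$\Gamma_{r,\rho}(s)=-\sigma_2(r,\rho)\,\mathcal{B}(s)S(\rho)\in\N\subset\C^{2,1},$$
whose components admit closed-form expressions wholly analogous to \eqref{lift} (with $\mu_1,\sigma_1$ replaced by $\mu_2,\sigma_2$ and the reshuffling of exponentials dictated by $D_2$ rather than $D_1$). The two normalization conditions $\det(\Gamma_{r,\rho},\Gamma_{r,\rho}',\Gamma_{r,\rho}'')=-1$ and $-i\langle\Gamma_{r,\rho},\Gamma_{r,\rho}'\rangle=1$ should then force $\mu_2(r,\rho)$ and $\sigma_2(r,\rho)$ to be exactly the functions prescribed before the statement, confirming that $\eta_{r,\rho}$ is a natural parametrization of a generic transversal isoparametric string.

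To identify its class, let $\F$ be the Wilczynski frame with first column $\Gamma_{r,\rho}$, and let $\tilde\F(s)=\mathcal{B}(s)\mathfrak{s}(\rho)$, with $\mathfrak{s}(\rho)$ the Heisenberg lift used in the proof of Proposition \ref{3.3.2}. Both $\F^{-1}\F'$ and $\tilde\F^{-1}\tilde\F'=\mathfrak{s}(\rho)^{-1}\mathcal{U}D_2(r,\rho)\mathcal{U}^{-1}\mathfrak{s}(\rho)$ are constants in $\g$, so there exists $B\in G_0$ with $\F=\tilde\F B$, and hence
$$\mathrm H_{r,\rho}=iB^{-1}\mathfrak{s}(\rho)^{-1}\mathcal{U}D_2(r,\rho)\mathcal{U}^{-1}\mathfrak{s}(\rho)B.$$
This shows the spectrum of $\mathrm H_{r,\rho}$ is $\{e_2^1<e_2^2<e_2^3\}$, so the spectral ratio is $r$. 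Moreover, since $D_2(r,\rho)=\mathrm{diag}(-ie_2^3,-ie_2^2,-ie_2^1)$---in contrast with $D_1$, where the \emph{middle} eigenvalue sits in the first diagonal slot---the eigenspaces $\mathbb V_1,\mathbb V_2,\mathbb V_3$ are spanned respectively by $B^{-1}\mathfrak{s}(\rho)^{-1}U_3$, $B^{-1}\mathfrak{s}(\rho)^{-1}U_2$ and $B^{-1}\mathfrak{s}(\rho)^{-1}U_1$. In the pseudo-unitary basis $(U_1,U_2,U_3)$ of Section \ref{s:1}, $U_1$ is timelike and $U_2,U_3$ are spacelike; since $\mathfrak{s}(\rho)B\in G$ preserves the Hermitian form, $\mathbb V_3$ is timelike while $\mathbb V_1$ and $\mathbb V_2$ are spacelike. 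By Proposition \ref{3.2.3}, $(\kappa,\tau)\in\mathcal{P}_-$, so $\eta_{r,\rho}$ is of the second class.

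For the knot type, I would re-express $\mathcal{B}(s)$ in Clifford angle coordinates by matching the $(2,2)$- and $(1,3)$-entries of $\mathcal{U}\,\mathrm{Exp}(sD_2(r,\rho))\,\mathcal{U}^{-1}$ against those of $R(\theta_1,\theta_2)$ in \eqref{torus}. A routine identification of exponentials yields
$$\theta_1(s)=(r-1)\frac{\mu_2(r,\rho)}{1-\mu_2(r,\rho)}\,s,\qquad \theta_2(s)=(2+r)\frac{\mu_2(r,\rho)}{1-\mu_2(r,\rho)}\,s,$$
so that $-\theta_2/\theta_1=(2+r)/(1-r)\in(0,1)$. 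With the orientation convention of Section \ref{s:1}, the trajectory is a torus knot of type $(p,q)$ on $\mathcal{T}_\rho$ with $p/q=(2+r)/(1-r)$, forcing $p$ and $q$ to be positive coprime integers---that is, a positive torus knot. The main technical subtlety of the whole argument is the causal-character analysis above: one must track carefully how the reordering in $D_2$ transfers the timelike direction from $\mathbb V_2$ to $\mathbb V_3$, which is precisely what shifts $(\kappa,\tau)$ from $\mathcal{P}_+$ into $\mathcal{P}_-$ and turns the knot from negative into positive.
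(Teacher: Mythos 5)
Your proof follows essentially the same route as the paper's own (which is explicitly modeled on that of Proposition \ref{3.3.2}): the explicit W-lift $-\sigma_2(r,\rho)\mathcal{B}(s)S(\rho)$ normalized by $\det(\Gamma,\Gamma',\Gamma'')=-1$ and $-i\langle\Gamma,\Gamma'\rangle=1$, the comparison of the Wilczynski frame $\F$ with $\tilde\F=\mathcal{B}(s)\mathfrak{s}(\rho)$ to read off the spectrum and eigenspaces of the Hamiltonian, and the Clifford-angle computation giving $p/q=(2+r)/(1-r)\in(0,1)$. In fact your causal-character bookkeeping ($\mathbb{V}_3$ timelike and $\mathbb{V}_1,\mathbb{V}_2$ spacelike, hence $(\kappa,\tau)\in\mathcal{P}_-$ by Proposition \ref{3.2.3}) is the one forced by the ordering of the diagonal entries of $D_2$ and by the fact that the first column of $\mathcal{U}$ is the timelike one; the paper's printed text at this step repeats the eigenvector assignment from the first-kind case verbatim, which is inconsistent with its own Proposition \ref{3.2.3}, so your version is the one that actually closes the argument.
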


\begin{proof}
The proof is rather similar to that of Proposition \ref{3.3.2}.
First, note that $\eta_{r,\rho}$ is the orbit through the point $S(\rho)\in \S$ of the 1-parameter group
$$
   \mathcal{B}:s\in\R\to \mathcal{U}\,\mathrm{Exp}(s D_2(r,\rho))\,\mathcal{U}^{-1} \in{G}
     $$
where $D_2(r,\rho)$ is a diagonal matrix with purely imaginary eigenvalues $-ie_2^1(r,\rho)$, $ie_2^2(r,\rho)$,
 and $ie_2^3(r,\rho)$, such that $e_1^1(r,\rho)/e_1^3(r,\rho)$, $e_1^2(r,\rho)/e_1^3(r,\rho)$ $\in\Q$.
 This implies that $\mathcal{B}$ is a periodic map. Consider the lift
$$
 \Gamma_{r,\rho}:\R\to\mathcal{N}\subset\C^{2,1}
    $$
    of $\eta_{r,\rho}$ defined by
$$
  \Gamma_{r,\rho}(s)=-\sigma_2(r,\rho)\mathcal{B}(s)S(\rho),
    $$
where
\begin{equation}\label{sigma2-second-class}
\sigma_2(r,\rho)=\frac{2 (1- \mu_2(r, \rho))}{\mu_2(r, \rho)\sqrt[3]{\rho(-2- 3 r+3 r^2+2 r^3)(4-\rho^4)}}
\end{equation}
By elementary calculations, it follows that the components of the lift are given by
\begin{equation}\label{lift2}
\begin{cases}
\Gamma_1=  \frac{\sigma_2}{4}\left(e^{-i \frac{\mu_2r s}{ 1 - \mu_2} }(2 -\rho^2)+ e^{-i \frac{\mu_2 s}{ 1 - \mu_2} }(2 + \rho^2)\right),\\
\Gamma_2=\sigma_2\rho e^{i \frac{\mu_2 (1+r)s}{ 1 - \mu_2} },\\
\Gamma_3=i\frac{\sigma_2}{4}\left( e^{-i \frac{\mu_2 s}{ 1 - \mu_2} }(2 + \rho^2)-e^{-i \frac{\mu_2r s}{ 1 - \mu_2} }(2 -\rho^2)\right). \\
\end{cases}
\end{equation}
Thus, $\det(\Gamma_{r,\rho},\Gamma_{r,\rho}',\Gamma_{r,\rho}'') = -1$ and
$-i\langle\Gamma_{r,\rho},\Gamma_{r,\rho}'\rangle = 1$.
From this we deduce that $\eta_{r,\rho}$ is a generic transversal curve parameterized by
the natural parameter and that $\Gamma_{r,\rho}$ is a W-lift along $\eta_{r,\rho}$.
Since $\eta_{r,\rho}$ is an orbit of a 1-parameter group of CR transformations,
its bending and torsion are constant.
This implies that $\eta_{r,\rho}$ is an isoparametric string.
Let $\F$ be the Wilczynski frame field along $\eta_{r,\rho}$ with first column vector $\Gamma_{r,\rho}$ and let
$\tilde\F: \R\to{\rm  G}$ be the frame field along $\eta_{r,\rho}$ defined by
$$
  \tilde\F:\R\ni s\mapsto\mathcal{B}(s)\mathfrak{s}(\rho)\in{G},
    $$
where
 $$
 \mathfrak{s}(\rho)=
\begin{pmatrix}
1&0&0\\
\rho&1&0\\
\frac{i\rho^2}{2}&i\rho&1\\
\end{pmatrix}
\in{G}.
$$
Since the bending and the twist of $\eta_{r,\rho}$ are constant, then $\F^{-1}\F'$ is a constant element
$K$ of the Lie algebra $\mathfrak{g}$. By construction, $\tilde\F^{-1}\tilde\F' = \mathfrak{s}(\rho)^{ -1}\mathcal{B}(s)\mathfrak{s}(\rho)$ is also constant.
Therefore, there exists an element $B$ of the structure group ${G}_0$ of the Chern--Moser
bundle such that $\F=\tilde\F B$. Then, the Hamiltonian of $\eta_{r,\rho}$  is given by
$$
   {\rm H}_{r,\rho} = iB^{ -1} \mathfrak{s}(\rho)^{ -1}\mathcal{U}D_2(r,\rho)\mathcal{U}^{ -1}\mathfrak{s}(\rho) B.
    $$
This implies that $e_2^1 (r,\rho)<e_2^2 (r,\rho)<e_2^3 (r,\rho)$ is the spectrum of ${\rm H}_{r,\rho}$.
This proves that $r$ is the spectral ratio of  $\eta_{r,\rho}$. In addition, the corresponding
eigenspaces are spanned, respectively, by the vectors
$$
 \begin{cases}
V_1=B^{ -1} \mathfrak{s}(\rho)^{ -1}\mathcal{U}_ 3;\\
V_2=B^{ -1} \mathfrak{s}(\rho)^{ -1}\mathcal{U}_1; \\
V_3=B^{ -1} \mathfrak{s}(\rho)^{ -1}\mathcal{U}_2; \\
\end{cases}
 $$
where $\mathcal{U}_ 1$, $\mathcal{U}_ 2$ and $\mathcal{U}_ 3$ are the column vectors
of a  pseudo-unitary basis $\mathcal{U}$. Taking into account that  $\mathfrak{s}(\rho) B\in G$,
it follows that $V_1$ and $V_3$ are spacelike, while $V_2$ is timelike. By Proposition \ref{3.2.3},
 it follows that $\eta_{r,\rho}$ is an isoparametric string of the second class.

Consider the parametrization of the standard Heisenberg cyclide $\mathcal{T}_\rho$ in terms of the
Clifford angles $\theta_1$ and $\theta_2$. Then the parametric equations of  $\eta_{r,\rho}$ can be written as
$$
 \theta_1(s)=-\frac{(1- r) \mu_2(r,\rho)}{1-\mu_2(r,\rho)}s,\quad  \theta_2(s)=\frac{(2+ r) \mu_2(r,\rho)}{1-\mu_2(r,\rho)}s.
 $$
Since the elliptical profile is counterclockwise-oriented, then $\eta_{r,\rho}$ is a torus knot of type
$(p,q)$, where $p$, $q$ are relatively prime integers, such that
$$
  \frac{p}{q}=-\frac{\theta_2(s)}{\theta_1(s)}=\frac{2+r}{1-r}.
    $$
\end{proof}

\begin{figure}[h]
\begin{center}
\includegraphics[height=6.2cm,width=6.2cm]{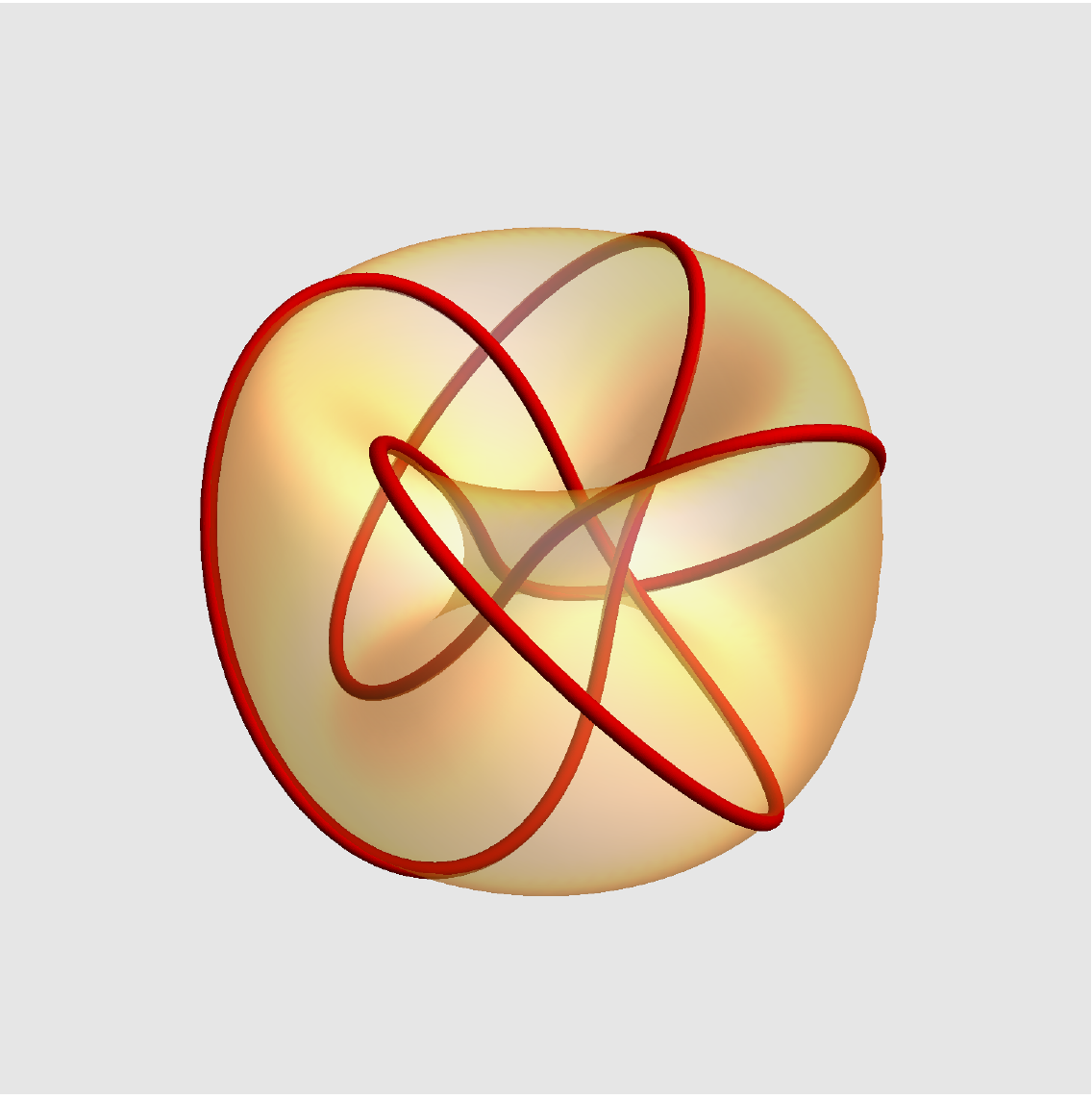}
\caption{\small{The Heisenberg projection of the symmetrical configuration $\eta_{r,\rho}$, with
$r = -5/7$ and $\rho = 0.7$. The torus knot has spin $1/3$, is of type $(3,4)$, and its total
strain is
$\approx 6.01323$.}}\label{FIG7}
\end{center}
\end{figure}

\begin{remark}\label{period2}
From \eqref{lift} and the fact that $\kappa =\frac{1}{2}\langle \Gamma',\Gamma'\rangle$ and $\tau = {\rm Im}(\langle\Gamma'',\Gamma'\rangle) + 3\kappa^2$, we deduce that the bending and the twist of $\eta_{r,\rho}$
are given, respectively, by
{\scriptsize$$
\kappa_{r,\rho}=\frac{16 r\rho^2-(-2+\rho^2)^2+ r^2 (2+\rho^2)^2}{4 ((2+3 r-3 r^2-2 r^3)\rho(-4+\rho^4))^{2/3}},
$$}
and
{\scriptsize$$
\tau_{r,\rho}=\frac{3 \Big(16 r \rho^2 - (-2 + \rho^2)^2 + r^2(2 + \rho^2)^2\Big)^2
-4 (1 + r + r^2)\Big(4 + 12 \rho^2 + \rho^4 - r (4 -12\rho2+ \rho^4)\Big)^2}{16 ((2+3 r-3 r^2-2 r^3)\rho(-4+\rho^4))^{4/3}}.
$$}
The minimal period of the W-lift $\Gamma_{r,\rho}$ is
{$$
  \omega_{r,\rho}=2\pi\frac{1-\mu_2(r,\rho)}{\mu_2(r,\rho)}\mathrm{denominator}(r).
    $$}
\end{remark}

\begin{prop}\label{3.4.3}
Let $\E_{r,\rho}$ be the trajectory of $\eta_{r,\rho}$, a symmetrical configuration of the second kind,
with spectral {parameter} $r = \frac{m}{n} \in(-2,-1/2)$,
${\rm gdc}(m,n) = 1$, $m<0$, and Clifford parameter $\rho$. The following hold true:
\begin{enumerate}
\item $\E_{r,\rho}$ has spin $1/3$ and phase anomaly $4\pi/3$ if and only if there exist $h,k \in\Z$, $h<0$, $k>0$,
such that $m = 1 + 3h$ and $n = 1+3k$;
\item $\E_{r,\rho}$ has spin $1/3$ and phase anomaly $2\pi/3$ if and only if there exist $h,k \in\Z$, $h<0$, $k>0$,
such that $m = 2 + 3h$ and $n = 2 + 3k$.
\end{enumerate}
\end{prop}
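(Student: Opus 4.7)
The plan is to adapt the proof of Proposition \ref{3.3.3} to the second kind, substituting the W-lift \eqref{lift2} and the period expression from Remark \ref{period2} for their first-kind counterparts \eqref{lift} and Remark \ref{period}. First I would write the Heisenberg-coordinate expression of $\eta_{r,\rho}$, entirely analogous to \eqref{gamma}, and translate the condition $\eta_{r,\rho}(\omega_{r,\rho}/3)=\eta_{r,\rho}(0)$ into an arithmetic congruence on $(m,n)$ along the same lines as in the first-kind argument, discarding the spurious branch exactly as in that proof.

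The core computation is the phase analysis. Writing $r=m/n$ with $\gcd(m,n)=1$, $n>0$, $m<0$, and using $\omega_{r,\rho}=2\pi(1-\mu_2)n/\mu_2$ from Remark \ref{period2}, the three phases appearing in \eqref{lift2} specialize at $s=\omega_{r,\rho}/3$ to
\[
-\frac{\mu_2 r s}{1-\mu_2}=-\frac{2\pi m}{3},\qquad -\frac{\mu_2 s}{1-\mu_2}=-\frac{2\pi n}{3},\qquad \frac{\mu_2(1+r)s}{1-\mu_2}=\frac{2\pi(m+n)}{3}.
\]
Substituting into \eqref{lift2}, the requirement $\Gamma_{r,\rho}(\omega_{r,\rho}/3)=e^{i\theta}\Gamma_{r,\rho}(0)$, with $e^{i\theta}$ forced to be a cube root of unity (since both sides are W-lifts and the lift is W-unique up to such a factor), imposes $e^{-2\pi im/3}=e^{-2\pi in/3}=e^{i\theta}$ from the $\Gamma_1$ and $\Gamma_3$ components and $e^{2\pi i(m+n)/3}=e^{i\theta}$ from the $\Gamma_2$ component. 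These force $m\equiv n\pmod 3$; the subcase $m\equiv n\equiv 0$ is excluded by $\gcd(m,n)=1$. The remaining subcases give phase anomalies $4\pi/3$ (for $m\equiv n\equiv 1$) and $2\pi/3$ (for $m\equiv n\equiv 2$). Rewriting these congruences as $m=1+3h$, $n=1+3k$ and $m=2+3h$, $n=2+3k$ respectively, with signs $h<0$ and $k>0$ enforced by $m<0<n$ and $r\in(-2,-1/2)$, yields the two equivalences claimed. The converse direction is a direct verification: substituting either pair of expressions for $(m,n)$ into the three phases above shows that each component of $\Gamma_{r,\rho}(\omega_{r,\rho}/3)$ equals the corresponding component of $\Gamma_{r,\rho}(0)$ multiplied by the prescribed cube root of unity, so the minimal period of $\eta_{r,\rho}$ is $\omega_{r,\rho}/3$ with the indicated phase anomaly.

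The main obstacle is a combinatorial bookkeeping subtlety: in the $\Gamma_1$ and $\Gamma_3$ components of \eqref{lift2} the two relevant exponentials are weighted by $(2-\rho^2)$ and $(2+\rho^2)$, so one must exclude the possibility that these weights conspire to yield the required proportionality when $m\not\equiv n\pmod 3$. A direct modulus computation shows that in each of the six off-diagonal residue patterns the squared modulus of the resulting sum equals $4+3\rho^4$, which matches the required value $16$ only when $\rho=\sqrt{2}$; since the admissible range is $0<\rho<\sqrt{2}$, no accidental cancellation is possible, and the argument closes exactly as in the first-kind case.
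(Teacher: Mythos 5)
Your proposal is correct and follows essentially the same route as the paper: both arguments come down to evaluating the explicit W-lift \eqref{lift2} at $s=\omega_{r,\rho}/3$ and reading off congruences on $(m,n)$ modulo $3$, then sorting the two nonzero residue classes into the two phase anomalies. The only organizational difference is that you extract the spin condition directly from the lift (using that the proportionality factor must be a cube root of unity and that the weights $2\pm\rho^2$ separate the two exponentials for $0<\rho<\sqrt2$), whereas the paper first works in Heisenberg coordinates, obtains $\sin\bigl(\tfrac{2}{3}(m-n)\pi\bigr)=0$, and discards a spurious half-integer branch; your variant is marginally cleaner but not a genuinely different argument.
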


\begin{proof}
Using the Heisenberg chart, the natural parametrization of $\E_{r,\rho}$ is given by  $\eta_{r,\rho}(s)=(x_{r,\rho}(s),y_{r,\rho}(s),z_{r,\rho}(s))$, where
$$\begin{cases}
x_{r,\rho}(s)=2 \rho\frac{ (2 +\rho^2) \cos(A_{r,\rho}^2 s)-(-2 + \rho^2) \cos(C_{r,\rho}^2s) }{4 + \rho^4 - (-4 + \rho^4) \cos(B_{r,\rho}^2 s)} \\
y_{r,\rho}(s)=2 \rho\frac{ (2 +\rho^2) \sin(B_{r,\rho}^2 s)-(-2 + \rho^2) \sin(C_{r,\rho}^2s) }{4 + \rho^4 - (-4 + \rho^4) \cos(B_{r,\rho}^2 s)}\\
z_{r,\rho}(s)=\frac{(\rho^4-4) \sin(B_{r,\rho}^2 s) }{4 + \rho^4 - (-4 + \rho^4) \cos(B_{r,\rho}^2 s)}\\
\end{cases}$$
and we denote by $A_{r,\rho}^2$, $B_{r,\rho}^2$ and $C_{r,\rho}^2$
$$\begin{cases}
A_{r,\rho}^2=(2+ r)\frac{ \mu_2(r, \rho)}{1-\mu_2(r,\rho)};\\
B_{r,\rho}^2=(1-r)\frac{ \mu_2(r, \rho)}{1-\mu_2(r,\rho)};\\
C_{r,\rho}^2=(1+2r)\frac{ \mu_2(r, \rho)}{1-\mu_2(r,\rho)}.\\
\end{cases}$$
Let $r = m/n$, where $m, -n\in\Z_+$, ${\rm gdc}(m,n) = 1$ and $-2 < m/n<-1/2$.
Then, $\eta_{r,\rho}(0) = (\rho, 0,0)$. Taking into account \eqref{lift2},
the coordinates of $\eta_{r,\rho}(\omega_{ r,\rho}/3)$ are
\begin{equation}\label{eta}
\begin{cases}
x_{r,\rho}(\omega_{ r,\rho}/3)=-2 \rho\frac{ (-2 + \rho^2)\cos(2(m - n) \pi/3)-(2 + \rho^2) \cos(2(m +2 n) \pi/3) }{4 + \rho^4 - (-4 + \rho^4) \cos(2(2m + n) \pi/3)} \\
y_{r,\rho}(\omega_{ r,\rho}/3)=-2 \rho\frac{ (2 +\rho^2) \sin(2(m +2 n) \pi/3)+(-2 + \rho^2) \sin(2(m - n) \pi/3) }{4 + \rho^4 - (-4 + \rho^4) \cos(2(2m + n) \pi/3)}\\
z_{r,\rho}(\omega_{ r,\rho}/3)=\frac{(\rho^4-4) \sin(2(2m + n) \pi/3) }{4 + \rho^4 - (-4 + \rho^4) \cos(2(2m + n) \pi/3)}\\
\end{cases}
\end{equation}
Suppose that $\eta_{r,\rho}$ has spin $1/3$ (i.e., $\eta_{r,\rho}$ has minimal period $\omega_{ r,\rho}/3$).
Then $\eta_{r,\rho}(0) =\eta_{r,\rho}(\eta_{ r,\rho}/3)$. This implies
$$
   \sin\Big(\frac23(m - n) \pi\Big) = 0.
     $$
Thus, either $m = 3\tilde k + n > 0$, or $m = 3 \tilde k + n + \frac32$, for some integer $\tilde k$.
In the latter
case $\eta_{r,\rho}(\omega_{ r,\rho}/3)= (-2/\rho,0,0) \neq \eta_{r,\rho}(0)$.
Therefore, only the first case may occur. Hence
$$
m = 3\tilde k+m.
 $$
By using \eqref{lift2}, we can compute a W-lift $\Gamma_{r,\rho}$ of $\eta_{r,\rho}$. It follows that
$$
  \Gamma_{r,\rho}(\omega_{ r,\rho}/3)=e^{i\frac{4n\pi}{3}}\Gamma_{r,\rho}(0).
    $$

Suppose that the phase anomaly of $\E_{r,\rho}$ is $4\pi/3$. Then there exists an integer
$\tilde  h$ such that $n=  1 +3 \tilde h/{2}$. From this it follows that $\tilde h$ is even.
If we put $\tilde h= 2h$, we get
$n = 1 + 3h$ and $ m = 1 + 3k,$ where $k =\tilde  k +h$.

If the phase anomaly of $\E_{r,\rho}$ is $2\pi/3$, then there exists an integer $\tilde  h$ such
that $n =  1/2+3 \tilde h/{2}$. From this it follows that $\tilde h$ is odd. Putting $\tilde h= 2h+1$, we get
$n = 2+ 3h$ and $ m= 2+ 3k,$ where $k =\tilde  k+h$.

Suppose that $r =\frac{1+ 3 h}{1 + 3 k}$. From \eqref{eta}, it follows that $\eta_{r,\rho}$ has minimal
period $\omega_{r,\rho}/3$, i.e., has spin $1/3$. Moreover, by using \eqref{lift2} we get that the
phase anomaly of $\E_ {r,\rho}$ is $4\pi/3$.

Suppose that  $r =\frac{2+ 3 h}{2 + 3 k}$. From \eqref{eta}, it follows that $\eta_{r,\rho}$ has
minimal period $\omega_{r,\rho}/3$, i.e., has spin $1/3$. In addition, by using \eqref{lift2} we
get that the phase anomaly of $\E_ {r,\rho}$ is $2\pi/3$.
\end{proof}

\begin{prop}\label{3.4.4}
Let $\E_{r,\rho}$ be the trajectory of a symmetrical configuration of the second kind. Then, its
Maslov index is $p+q$, where $(p,q)$ is the torus knot type of $\E_{r,\rho}$.
\end{prop}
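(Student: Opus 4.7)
The plan is to mirror the proof of Proposition~\ref{3.3.4} almost verbatim. By Definition~\ref{def:maslov}, the Maslov index is given by
\[
\mu_\K = \frac{i}{2\pi}\int_0^{\omega_{r,\rho}}\frac{d\chi}{\chi},
\]
where $\chi = \Gamma_1 - i\Gamma_3$ is built from a W-lift $\Gamma_{r,\rho}$ of $\eta_{r,\rho}$ and $\omega_{r,\rho}$ is the minimal period of the W-lift, given explicitly in Remark~\ref{period2}. The strategy is therefore to substitute the explicit expressions \eqref{lift2} for the components of $\Gamma_{r,\rho}$ into $\chi$, simplify, read off the logarithmic derivative, and integrate against $\omega_{r,\rho}$.

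The decisive algebraic step will be the computation of $\chi$. When one forms $\Gamma_1 - i\Gamma_3$ using \eqref{lift2}, the factor of $i$ multiplying $\Gamma_3$ flips the relative sign of the bracket, so that the two terms proportional to $e^{-i\mu_2 rs/(1-\mu_2)}$ cancel between $\Gamma_1$ and $-i\Gamma_3$. What survives is a pure exponential,
\[
\chi(s) \;=\; \frac{\sigma_2(r,\rho)(2+\rho^2)}{2}\, e^{-i\mu_2(r,\rho)\, s/(1-\mu_2(r,\rho))}.
\]
This is a qualitative simplification compared with the first-kind case in Proposition~\ref{3.3.4}, where $\chi$ was a sum of two exponentials and one needed the inequality $|(2+\rho^2)(1-i)| > |(2-\rho^2)(1+i)|$ to argue that the bracket factor did not contribute to the winding.

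With $\chi$ a pure exponential, the logarithmic derivative becomes the constant one-form
\[
\frac{i}{2\pi}\frac{d\chi}{\chi} \;=\; \frac{\mu_2(r,\rho)}{2\pi(1-\mu_2(r,\rho))}\,ds,
\]
and multiplication by $\omega_{r,\rho} = 2\pi(1-\mu_2(r,\rho))\,\mathrm{denominator}(r)/\mu_2(r,\rho)$ from Remark~\ref{period2} yields immediately $\mu_\K = \mathrm{denominator}(r)$. The last step is then to identify $\mathrm{denominator}(r)$ with $p+q$: writing $r = m/n$ in lowest terms, Proposition~\ref{3.4.2} gives $p/q = (2n+m)/(n-m)$, so $(p,q)$ is the reduction of the pair $(2n+m, n-m)$ by $\gcd(2n+m,n-m)\in\{1,3\}$. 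The arithmetic identification $\mathrm{denominator}(r) = p+q$ is the main obstacle of the proof and is where the spin / phase-anomaly dichotomy from Proposition~\ref{3.4.3} has to be invoked; the genuine CR-geometric content of the argument is wholly concentrated in the cancellation that collapses $\chi$ to a single exponential.
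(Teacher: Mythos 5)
Your analytic computation is correct and is, in substance, exactly the paper's proof, which simply asserts that the result ``follows from \eqref{lift2} and Remark \ref{period2}'': the two terms proportional to $e^{-i\mu_2 r s/(1-\mu_2)}$ do cancel in $\Gamma_1-i\Gamma_3$, so that $\chi=\tfrac{\sigma_2(2+\rho^2)}{2}\,e^{-i\mu_2 s/(1-\mu_2)}$ is a single exponential, and integrating the constant form $\tfrac{i}{2\pi}\chi^{-1}d\chi=\tfrac{\mu_2}{2\pi(1-\mu_2)}\,ds$ over the minimal period $\omega_{r,\rho}$ of the W-lift gives $\mu_\K=\mathrm{denominator}(r)=n$. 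Your remark that, unlike the first-kind case of Proposition \ref{3.3.4}, no estimate on a bracket factor is needed here is accurate and worth making explicit.

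The gap is precisely the final step you flag but do not carry out, and it cannot be closed unconditionally. Writing $r=m/n$ in lowest terms, Proposition \ref{3.4.2} gives $(p,q)$ as the reduction of $(2n+m,\,n-m)$ by $d=\gcd(2n+m,\,n-m)\in\{1,3\}$, so $p+q=3n/d$. Hence $n=p+q$ if and only if $d=3$, i.e.\ $m\equiv n \pmod 3$, which by Proposition \ref{3.4.3} is exactly the spin-$1/3$ case. For a spin-$1$ configuration such as $r=-3/4$ (knot type $(5,7)$), your computation yields $\mu_\K=4=(p+q)/3$, not $p+q=12$. So what your argument actually establishes is $\mu_\K=\mathrm{denominator}(r)$; invoking Proposition \ref{3.4.3} as you propose would identify this with $p+q$ only under the additional hypothesis that the spin is $1/3$. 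To be fair, the paper's own one-line proof does not address this either, so the discrepancy is inherited from the statement rather than introduced by you; but as a proof of the proposition as written, the argument is incomplete, and in the spin-$1$ case the asserted conclusion fails.
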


\begin{proof}
The Maslov index of generic transversal knot is equal to
$\frac{i}{2\pi}\int_0^{\omega}\frac{d\chi}{\chi}$,
where $\chi=\Gamma_1-i\Gamma_3$ and $\omega$ is the minimal period of a W-lift $\Gamma_{r,\rho}$.
The proof follows from \eqref{lift2} and the expression of the minimal period $\omega$
given in Remark \ref{period2}.\end{proof}

\begin{remark}
The Bennequin number of $\E_{r,\rho}$ can be
estimated via a numerical evaluation of the Gaussian linking integral of $\eta_{r,\rho}$ with
the contact push-off  of $\E_{r,\rho}$  in the direction of the vector field  $\xi = \partial_x + y\partial_z$.
The numerical experiments give support
{for supposing that}
$$
  \beta(\E_{r,\rho})=pq-(p+q).
    $$
In view of the Etnyre Theorem, the symmetrical configurations are positive torus knots with maximal
Bennequin invariant.
\end{remark}

\begin{remark}
The CR invariant moving trihedron of  $\E_{r,\rho}$ can be explicitly computed via the procedure explained
in Remark \ref{2.2.5}. The self-linking number of  $\E_{r,\rho}$ can be estimated via a numerical
evaluation of the Gaussian linking integral of  $\eta_{r,\rho}$ with the push-off of $\eta_{r,\rho}$
in the direction of CR normal vector field $\vec N$
along $\eta_{r,\rho}$. The numerical experiments give support
{for supposing that}
$${\rm SL}(\E_{r,\rho})=pq.$$
\end{remark}

In analogy with Proposition \ref{3.3.5}, we can prove the following.

\begin{prop}\label{3.4.5}
Any isoparametric string of the second class is congruent to a symmetrical configuration of the second kind.
\end{prop}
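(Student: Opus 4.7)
The plan is to follow the strategy used in the proof of Proposition \ref{3.3.5}, making the adjustments dictated by Proposition \ref{3.2.3}: for isoparametric strings of the second class, the eigenspaces $\mathbb{V}_1$ and $\mathbb{V}_2$ of the Hamiltonian ${\rm H}_{\kappa,\tau}$ are spacelike while $\mathbb{V}_3$ is timelike, which is exactly the causal reversal encoded in the different diagonal matrix $D_2$ appearing in \eqref{symm-conf2}.

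Specifically, given an isoparametric string $\eta$ of the second class with bending $\kappa$, twist $\tau$ and spectral ratio $r\in (-2,-1/2)\cap \Q$, I would fix a Wilczynski frame $\F$ with $\F(0) = I_3$, write the eigenvalues of ${\rm H}_{\kappa,\tau}$ as $e_1 = rm/(1-m) < e_2 = -(1+r)m/(1-m) < e_3 = m/(1-m)$ with $m \in (0,1)$, and assemble the unimodular pseudo-unitary basis
\[
\mathcal{W}(\kappa,\tau) = \left(\frac{V_3(\kappa,\tau)}{\|V_3(\kappa,\tau)\|},\, i\frac{V_2(\kappa,\tau)}{\|V_2(\kappa,\tau)\|},\, \frac{V_1(\kappa,\tau)}{\|V_1(\kappa,\tau)\|}\right),
\]
in which the timelike direction is now $V_3$ rather than $V_2$, with $V_j(\kappa,\tau)$ as in \eqref{eigen}. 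A direct computation then yields ${\rm K}_{\kappa,\tau} = \mathcal{W}(\kappa,\tau)\Delta_2(m,r)\mathcal{W}(\kappa,\tau)^{-1}$ with $\Delta_2(m,r) = \mathrm{diag}(-ie_3,-ie_2,-ie_1)$, which has precisely the shape of the matrix $D_2(r,\rho)$ appearing in \eqref{symm-conf2}.

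Next, I would choose $\mathcal{B} \in G$ such that $\mathcal{W}(\kappa,\tau) = \mathcal{B}\mathcal{U}$ and pass to the congruent curve $\tilde\eta = \mathcal{B}^{-1}\eta$, whose W-lift becomes
\[
\tilde\Gamma(s) = \mathcal{U}\,\mathrm{Exp}(s\Delta_2(m,r))\,\mathcal{U}^{-1}\mathcal{B}^{-1}E_1.
\]
Since $\mathcal{U}\,\mathrm{Exp}(s\Delta_2(m,r))\,\mathcal{U}^{-1} \in \mathrm{T}^2$, the slice $\Sigma$ of Section \ref{s:1} provides a unique $\rho \in [0,\sqrt 2]$ and $R \in \mathrm{T}^2$ with $[\mathcal{B}^{-1}E_1] = R[S(\rho)]$. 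The endpoints $\rho \in \{0,\sqrt 2\}$ must be excluded since they would force $\tilde\eta$ to lie on a singular torus orbit, hence to be a chain, contradicting genericity. After the further replacement of $\tilde\eta$ by $R^{-1}\tilde\eta$, the W-lift assumes the form $\tilde\Gamma(s) = \zeta\,\mathrm{Exp}(s\Delta_2(m,r))\,S(\rho)$ for some nonzero $\zeta \in \C$.

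Finally, the two Wilczynski normalizations $\det(\tilde\Gamma,\tilde\Gamma',\tilde\Gamma'') = -1$ and $\langle \tilde\Gamma,\tilde\Gamma'\rangle = i$ yield a pair of algebraic equations in $m,\zeta,r,\rho$; solving them forces $m = \mu_2(r,\rho)$ and $\zeta = -\sigma_2(r,\rho)$, thereby identifying $\tilde\eta$ with the symmetrical configuration $\eta_{r,\rho}$ of the second kind. The main obstacle is the bookkeeping needed to verify that, with the eigenvector ordering appropriate to the second class, the solution of these algebraic constraints produces the functions $\mu_2$ and $\sigma_2$ rather than $\mu_1$ and $\sigma_1$; this is a routine but nontrivial computation paralleling the corresponding step in the proof of Proposition \ref{3.3.5}.
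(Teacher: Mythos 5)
Your proposal is correct and is precisely the argument the paper intends: the paper omits the proof of Proposition \ref{3.4.5}, stating only that it follows ``in analogy with Proposition \ref{3.3.5}'', and you have supplied exactly that analogy, with the right adjustment of the eigenvector ordering (timelike $V_3$ in the first slot of $\mathcal{W}$, matching the eigenvalue ordering in $D_2$) dictated by Proposition \ref{3.2.3}. The remaining normalization step correctly forces $m=\mu_2(r,\rho)$ and $\zeta=-\sigma_2(r,\rho)$, so nothing is missing.
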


\section{The total strain functional}\label{s:4}

Let $\gamma : J \to\S$ be a generic transversal curve with infinitesimal strain $ds$ and
$C\subset J$ be a closed interval. The integral $$\mathfrak{S}_{\gamma,C} = \int_C ds$$ is the \emph{total strain}
of the generic transversal arc $\gamma(C)$.

\vskip0.1cm
The main result of this section is the following.

\begin{thm}
A closed critical curve of the strain functional is equivalent to a
symmetrical
configuration of the second kind,
with positive knot type $(p,q)$, where $\frac{p}{q} \in (0,1)$, spectral {parameter} $r =\frac{p-2 q}{p+q}$,
and Clifford parameter
{\small\begin{equation}\label{rho}
\rho=\sqrt{\frac{6 + 6 r + 4 \sqrt{3 (1 + r + r^2)} -\sqrt[4]{5 + 8 r + 5 r^2 + 3(1+r)\sqrt{ 3( 1 + r + r^2)}}}{1-r}}.
\end{equation}}
\end{thm}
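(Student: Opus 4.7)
The plan is to apply the Griffiths--Cartan formalism assembled in Section~\ref{2.3}. Since the Lagrangian for the total strain functional is simply the independence 1-form $\eta$, the Poincar\'e--Cartan form $\xi=\eta+\sum_{j=1}^{7}p_{j}\mu^{j}$ on the phase space $\mathcal{Z}$ and its Cartan system $C(\Xi)$ are already displayed in Definition~\ref{2.3.2} and in~\eqref{system}. A closed critical curve of $\mathfrak{S}$ is the projection to $\mathcal{S}$ of a closed integral curve of $C(\Xi)$; periodicity kills every boundary term, so no transversality condition is needed. The proof then reduces to integrating this Cartan system and matching the result against the classification of isoparametric strings in Section~\ref{s:3}.

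The first step is to turn the crank on~\eqref{system}. The $\pi^{1}$- and $\pi^{2}$-contractions force $p_{6}=p_{7}=0$ along any critical curve, whence $\dot\mu^{6}$ and $\dot\mu^{7}$ give $p_{4}=p_{5}=0$. Feeding these into $\dot\mu^{4}$ produces $p_{2}=3\kappa p_{3}$, while $\dot\mu^{5}$ gives the constant value $p_{3}=\tfrac{2}{3}$; propagating these through $\dot\mu^{3}$ yields $p_{1}=0$, and $\dot\mu^{1}$ then becomes $6\kappa^{2}+\tfrac{2}{3}\tau=0$, i.e.
\[
\tau+9\kappa^{2}=0,
\]
while $\dot\mu^{2}$ reduces to $p_{2}'=0$, so $\kappa$ is constant. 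Every critical curve is therefore isoparametric, with twist and bending tied by $\tau=-9\kappa^{2}$.

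Next I identify this family inside Section~\ref{s:3}. Substituting $\tau=-9\kappa^{2}$ into the discriminant of Remark~\ref{discr} gives $D_{\kappa,\tau}=-27-864\kappa^{3}$, which is positive precisely when $\kappa<-2^{-5/3}$; in that range $3\kappa<0<3|\kappa|=|\tau|^{1/2}$, so $(\kappa,\tau)\in\mathcal{P}_{-}$ and the curve is an isoparametric string of the second class. Proposition~\ref{3.4.5} then guarantees that it is CR congruent to some symmetrical configuration $\eta_{r,\rho}$ of the second kind, and closedness forces $r\in\mathbb{Q}\cap(-2,-1/2)$. By Proposition~\ref{3.4.2} the trajectory is a positive $(p,q)$-torus knot with $p/q=(2+r)/(1-r)\in(0,1)$, which inverts to $r=(p-2q)/(p+q)$, exactly the spectral value asserted by the theorem.

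The remaining task is to pin down $\rho$ by imposing the Euler--Lagrange relation $\tau_{r,\rho}+9\kappa_{r,\rho}^{2}=0$ via the explicit formulas in Remark~\ref{period2}. The common cube-root denominator cancels and the condition becomes $3\,N_{1}(r,\rho)^{2}=(1+r+r^{2})\,N_{2}(r,\rho)^{2}$, where $N_{1},N_{2}$ are the quadratics in $\rho^{2}$ appearing respectively in $\kappa_{r,\rho}$ and $\tau_{r,\rho}$; the factorization $\sqrt{3}\,N_{1}\pm\sqrt{1+r+r^{2}}\,N_{2}=0$ reduces it to two quadratics in $\rho^{2}$, and the unique root lying in $(0,\sqrt{2})$ for $r\in(-2,-1/2)$ is precisely the nested-radical expression~\eqref{cliff-par}. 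The principal obstacle is this last algebraic simplification: extracting the exact closed form of $\rho(r)$ from the quadratic formula requires careful handling of $\sqrt{3(1+r+r^{2})}$ and is most cleanly verified by symbolic computation, after which one checks $(r,\rho(r))\in\mathfrak{B}$ to ensure that $\eta_{r,\rho}$ genuinely belongs to the family of Proposition~\ref{3.4.2}.
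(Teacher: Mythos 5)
Your proposal is correct and follows essentially the same route as the paper: integrate the Cartan system \eqref{system} to get $p_1=p_4=p_5=p_6=p_7=0$, $p_3=\tfrac23$, $p_2=2\kappa$, hence $\tau=-9\kappa^2$ with $\kappa$ constant; identify the resulting isoparametric string as second-class; invoke Propositions \ref{3.4.5} and \ref{3.4.2} to get the congruence and the relation $p/q=(2+r)/(1-r)$; and solve $\tau_{r,\rho}+9\kappa_{r,\rho}^2=0$ among the four roots of the resulting quartic in $\rho^2$, keeping the unique one in $(0,2)$. Two points of comparison. First, your argument that the critical string is of the second class is more direct than the paper's: you observe that $D_{\kappa,\tau}=-27(1+32\kappa^3)>0$ forces $\kappa<-2^{-5/3}<0$, whence $3\kappa<0\le|\tau|^{1/2}$ places $(\kappa,\tau)$ in $\mathcal{P}_-$ immediately, whereas the paper rules out the first class by comparing with the lower bound $\tfrac12$ for the bending of first-class strings; both work, and yours is cleaner. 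Second, your opening assertion that every closed critical curve is the projection of an integral curve of $C(\Xi)$ is precisely the nontrivial converse direction of Griffiths' formalism; it is valid here only because the derived flag of $\mathbb{J}$ has constant rank (Bryant \cite{Bryant1987}), a hypothesis the paper verifies explicitly in Section \ref{2.3}, so you should invoke that fact rather than saying merely that ``no transversality condition is needed.''
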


\begin{proof}
According to Section \ref{2.3}, generic transversal curves are in one-to-one correspondence with the
integral curves of
the Pfaffian system $(\mathbb{J},\eta)$ defined on the configuration space $Y =[{G}]\times\R^2$.
Let $\gamma : J \to\S$ be a generic transversal curve parameterized by the natural parameter and $\F : J \to Y$
its prolongation. Following Griffiths' approach to the calculus of variations \cite{Gr}, if $\F$ admits
a lift $\Phi : J \to\mathcal{Z}$ to the phase space (cf. Section \ref{2.3}), such that the linear map
$\Xi(\Phi'|_s,\cdot) : X \in T_{\Phi(s)}(\mathcal{Z}) \to\R$ is zero, for every $s$, then $\gamma$
is a critical curve of the total strain functional with respect to compactly supported variations.

As observed by R. Bryant \cite{Bryant1987}, if the derived systems of $\mathbb{J}$ have constant rank,
as in the present
case, then the converse is also true. More precisely, let $\Phi : J \to\mathcal{Z}$ be a curve in the phase space
such that $\Xi(\Phi'|_s,\cdot) = 0$, for every $s$, and that $\Phi^*(\xi)$ is nowhere vanishing. Let
$\mathrm{pr}: \mathcal{Z}\to\S$ be the natural projection of $\mathcal{Z}$ onto $\S$. Then,
$\gamma=\Phi\circ\mathrm{pr}:J\to\S$ is a critical curve for the total strain functional, with
respect to compactly supported variations.

We prove that a curve $\Phi : J \to\mathcal{Z}$ satisfies $\Xi(\Phi'|_s,\cdot) = 0$ and $\Phi^*(\xi)|_s\neq 0$,
for every $s$, if
and only if the bending and the twist of $\gamma=\Phi\circ{\rm pr}$ are constant and satisfy the equation
$\tau = -9\kappa^2$.
Recall that a curve $\Phi$ in the phase space is of the form
$$
  \Phi(s)=\left([A(s)],\kappa(s),\tau(s),p_1(s),\dots,p_7(s)\right),
    $$
where $A : J \to {G}$ is a smooth  map and $\kappa,\tau,p_1,\dots,p_7$ are smooth functions.
The projection $\gamma=\Phi\circ\mathrm{pr}$ is the smooth curve
$$
  \gamma(s)=[A_1(s)]_\C
     $$
where $A_1 : J \to \C^{2,1}$ is the first column vector of $A$.

\vskip0.1cm
Observe that $\Phi : J \to \mathcal{Z}$ satisfies  $\Xi(\Phi'|_s,\cdot) = 0$ and $\Phi^*(\xi)|_s\neq 0$,
for every $s$, if and only if $\Phi$ is an integral curve of the Cartan system ${C}(\Xi)$ associated to the
Cartan--Poincar\'e form $\Xi$ (cf. Definition \ref{2.3.2}), such that $\Phi^*(\eta)|_s\neq 0$.
From the first equation of \eqref{system}, it follows
$$
  \Phi^*(\mu^j)=0,\quad \forall\, j=1,\dots,7,
   $$
and
 $$
   \Phi^*(\eta)\neq 0.
     $$
By definition,
 $$
   \Phi^*(\mu^j)=A^*(\mu^j),\quad \forall\, j=1,\dots,7,
      $$
and
     $$
       \Phi^*(\eta)=A^*(\alpha_1^3).
       $$
This implies that $([A],\kappa,\tau)$ is an integral curve of the Pfaffian differential system $(\mathbb{J},\eta)$
on the configuration space $Y$. Hence, $\gamma$ is a generic transversal curve with bending $\kappa$,
twist $\tau$, and such that $A$ is a Wilczynski frame field along $\gamma$.
From the second, the third and the fourth equation of \eqref{system}, we have
$$
 \begin{cases}
\Phi^*(\dot\pi_1)=p_6\Phi^*(\eta)=0,\\
\Phi^*(\dot\pi_2)=p_7\Phi^*(\eta)=0,\\
\Phi^*(\dot\eta)=p_6\Phi^*(\pi^1)+p_7\Phi^*(\pi^2)=0.\\
\end{cases}
 $$
Since $\Phi^*(\eta)\neq 0$, it follows that $p_6=p_7=0$. From the last two equations of \eqref{system}, we have
$$
 \begin{cases}
\Phi^*(\dot\mu_6)=dp_6-3p_4\Phi^*(\eta)=0,\\
\Phi^*(\dot\mu_7)=dp_7+3p_5\Phi^*(\eta)=0.\\
\end{cases}
 $$
Since $\Phi^*(\eta)\neq 0$ and $p_6=p_7=0$, it follows that $p_4=p_5=0$.
From the eighth and the ninth equation of \eqref{system}, we have
$$
 \begin{cases}
\Phi^*(\dot\mu_4)=dp_4+(p_2-3\kappa p_3-2p_7)\Phi^*(\eta)=0,\\
\Phi^*(\dot\mu_5)=dp_5-(2-3p_3-2\kappa p_6+4\tau p_7)\Phi^*(\eta)=0.\\
\end{cases}
 $$
Since $\Phi^*(\eta)\neq 0$ and $p_4=p_5=p_6=p_7=0$, it follows $p_3=\frac23$ and $p_2=2\kappa$.
From the seventh equation of \eqref{system}, we have $$\Phi^*(\dot\mu_3)=dp_3+(p_1+3\kappa p_4)\Phi^*(\eta)=0.$$
Since $\Phi^*(\eta)\neq 0$, $p_3=2/3$ and $p_4=0$, we obtain $p_1=0$.
 From the fifth equation of \eqref{system}, we have $$\Phi^*(\dot\mu_1)=dp_1+(3\kappa p_2+\tau p_3+p_6)\Phi^*(\eta)=0.$$
Since $\Phi^*(\eta)\neq 0$, $p_2=2\kappa$, $p_3=\frac23$ and $p_1=p_6=0$, we obtain $\tau=-9\kappa^2$.
 From the sixth equation of \eqref{system}, we have
 $$
 \Phi^*(\dot\mu_2)=dp_2-(3\kappa p_1-\tau p_4+p_5)\Phi^*(\eta)=0.
 $$
Since $\Phi^*(\eta)\neq 0$, $p_2=2\kappa$, $p_1=p_4=p_5=0$, we obtain that $\kappa$ is constant.
This proves that $\gamma$ is an isoparametric curve such that $\tau=-9\kappa^2$.

\vskip0.1cm
Conversely, let $\gamma$ be an isoparametric curve, such that $\tau= -9\kappa^2$, and
let $\F$ be a Wilczynski frame along $\gamma$.
Then,
$$
   \Phi(s)=\left(\F(s),\kappa,-9\kappa^2,0,2\kappa,\frac23,0,0,0,0\right)
     $$
is a lift of $\gamma$ to the phase space $\mathcal{Z}$, such that $\Phi^*(\eta)\neq 0$.
By construction, $\Phi$ is an integral curve of the Cartan system ${C}(\Xi)$.
We have thus proved that the critical curves of the strain functional are isoparametric
and satisfy $\tau= -9\kappa^2$, as claimed.

\vskip0.1cm
Next, we show that an isoparametric string with $\tau= -9\kappa^2$ must be of the second class.
Substituting $\tau= -9\kappa^2$ into the discriminant of the Hamiltonian (cf. Remark \ref{discr}),
we get
$$
  D(\kappa,-9\kappa^2)=-27(1+32\kappa^3).
    $$
Thus, $D(\kappa,-9\kappa^2) > 0$ if and only if $\kappa< -{2^{-\frac53}}$.
Since the infimum of the bending of an isoparametric string of the first class is $\frac12$ (cf. the formula
for $\kappa_{r,\rho}$ in Remark \ref{period} and the definition of the domain $\mathfrak{A}$
given by \eqref{A}),
we conclude that isoparametric strings of the first class cannot be critical for the total strain functional.

Next, we prove that a symmetrical configuration of the second kind, with spectral parameter $r$
and Clifford parameter $\rho$, is a critical point of the total strain functional if and only
if \eqref{rho} holds true.
The bending and the twist of a standard configuration of the {second} kind are given in
Remark \ref{period2} as a function of their spectral parameter $r$ and Clifford parameter $\rho$.
By taking into consideration such formulas, it follows that $\tau=-9\kappa^2$ if and only if
$$
16-96\rho^2-40 \rho^4-24 \rho^6+\rho^8-r (16+96 \rho^2-40 \rho^4+24 \rho^6+\rho^8)=0.
   $$
Consequently, $\rho^2$ needs to be equal to one of the following
$$
 \begin{array}{l}
f_1(r)=\frac{6+6 r-4 \sqrt3\sqrt{1+r+r^2}-\sqrt[4]{5+5 r^2-3 \sqrt3\sqrt{1+r+r^2}+r(8-3 \sqrt3\sqrt{1+r+r^2})}}{1-r}\\
f_2(r)=\frac{6+6 r-4 \sqrt3\sqrt{1+r+r^2}+\sqrt[4]{5+5 r^2-3 \sqrt3\sqrt{1+r+r^2}+r(8-3 \sqrt3\sqrt{1+r+r^2})}}{1-r}\\
f_3(r)=\frac{6+6 r+4 \sqrt3\sqrt{1+r+r^2}-\sqrt[4]{5+5 r^2+3 \sqrt3\sqrt{1+r+r^2}+r(8+3 \sqrt3\sqrt{1+r+r^2})}}{1-r}\\
f_4(r)=\frac{6+6 r+4 \sqrt3\sqrt{1+r+r^2}+\sqrt[4]{5+5 r^2+3 \sqrt3\sqrt{1+r+r^2}+r(8+3 \sqrt3\sqrt{1+r+r^2})}}{1-r}\\
\end{array}
    $$
Since $0<\rho<\sqrt2$, taking into account that $f_1(r)<0$, $f_2(r)<0$ and $f_4(r)>2$ and that
$0<6-4\sqrt2<f_3(r)<2$, we obtain the claimed expression \eqref{rho} for $\rho$.

We conclude the proof recalling  that, by Proposition \ref{3.4.2}, the
spectral parameter and the torus knot type $(p,q)$ of a standard configuration of the second kind
are related by $\frac{2+r}{1-r}=\frac{p}{q}$. Hence, $r = \frac{p-2q}{p+q}$.
\end{proof}

\bibliographystyle{amsalpha}

\begin{thebibliography}{AA}




\bibitem{Ba}
{T. F. Banchoff},
Osculating tubes and self-linking for curves on the three-sphere,
in {\em Global differential geometry: the mathematical legacy of Alfred Gray} (Bilbao, 2000),
10--19, Contemp. Math., 288, Amer. Math. Soc., Providence, RI, 2001.


\bibitem{BarDrag}
{E. Barletta and S. Dragomir},
Robinson--Sparling construction of CR structures associated to shearfree null geodesic congruences,
{\em  Riv. Mat. Univ. Parma} \textbf{11} (2020), to appear.

\bibitem{Benn1983}
{D. Bennequin},
Entrelacements et \'equations de Pfaff,
in {\em Third Schnepfenried geometry conference}, Vol. 1 (Schnepfenried, 1982),  87--161,
Ast\'erisque, 107-108, Soc. Math. France, Paris, 1983.





\bibitem{BuDieShn1977}
{D. Burns, Jr., K. Diederich, and S. Shnider},
Distinguished curves in pseudoconvex boundaries,
{Duke Math. J.} \textbf{44} (1977), no. 2, 407--431.

\bibitem{Bryant1987}
R. L. Bryant,
{On notions of equivalence of
variational problems with one independent variable},
{\em Contemp. Math.} \textbf{68}(1987), 65--76.





\bibitem{Ca}
{G. C{\u{a}}lug{\u{a}}reanu},
L'int\'egral de Gauss et l'analyse des noeuds tridimensionnels,
{\em  Rev. Math. Pures Appl.} \textbf{4} (1959), 5--20.

\bibitem{Cartan1932}
{E. Cartan},
Sur la g\'eom\'etrie pseudo-conforme des hypersurfaces de deux variables complexes, I,
{\em Ann. Math. Pura Appl. (4)} \textbf{11} (1932), 17--90 (or Oeuvres II, 2, 1931-1304);

\bibitem{Cartan1932-2}
{E. Cartan},
Sur la g\'eom\'etrie pseudo-conforme des hypersurfaces de deux variables complexes, II,
{\em Ann. Scuola Norm. Sup. Pisa (2)} \textbf{1} (1932), 333--354
(or Oeuvres III, 2, 1217-1238).





\bibitem{Chek2002}
{Y. Chekanov},
Differential algebra of Legendrian links,
{\em Invent. Math.} \textbf{150} (2002), no. 3, 441--483.

\bibitem{ChHa1984}
{S. S. Chern and R. S. Hamilton},
On Riemannian metrics adapted to three-dimensional contact manifolds.
With an appendix by Alan Weinstein, Lecture Notes in Math., 1111,
Workshop Bonn 1984 (Bonn, 1984), 279--308, Springer, Berlin, 1985.

\bibitem{ChMo1974}
{S. S. Chern and J. K. Moser},
Real hypersurfaces in complex manifolds,
{\em Acta Math.} \textbf{133} (1974), 219--271.




\bibitem{DG}
{D. DeTurck and H. Gluck},
Linking, twisting, writhing, and helicity on the 3-sphere and in hyperbolic 3-space,
{\em J. Differential Geom.} \textbf{94}  (2013), no. 1, 87--128.




\bibitem{DMN}
{A. Dzhalilov, E. Musso, and L. Nicolodi},
Conformal geometry of timelike curves in the (1+2)-Einstein universe,
{\em Nonlinear Anal.} \textbf{143} (2016), 224--255.


\bibitem{Eliash1993}
{Y. Eliashberg},
Legendrian and transversal knots in tight contact 3-manifolds,
in {\em Topological Methods in Modern Mathematics} (Stony Brook, NY, 1991), 171--193,
Publish or Perish, Houston, TX, 1993.

\bibitem{Etn1999}
{J. B. Etnyre},
Transversal torus knots, {\em Geom. Topol.}
{\bf 3} (1999), 253--268.

\bibitem{Et2}
{J. B. Etnyre},
{Legendrian and transveral knots},
in {\em Hanbook of Knot Theory}, 105--185, W. Menasco \& M. Thistlethwaite (Eds.), Elsevier B. V., Amsterdam, 2005.
ArXiv version: arXiv:math/0306256v2 [math.SG].

\bibitem{EtHo}
{J. B. Etnyre and K. Honda},
Knots and contact geometry I: torus knots and the figure eight knot,
{\em  J. Symplectic Geom.} {\bf 1} (2001), 63--120.

\bibitem{Farris1986}
{F. A. Farris},
An intrinsic construction of Fefferman's CR metric,
{\em Pacific J. Math.} \textbf{123} (1986), no. 1, 33--45.


\bibitem{Feff1976}
{C. L. Fefferman},
Monge--Amp\`ere equations, the Bergman kernel, and geometry of pseudoconvex domains,
{\em Ann. of Math. (2)} \textbf{103} (1976), no. 2, 395--416; correction,
\textbf{104} (1976), 393--394.





\bibitem{FuTa1997}
{D. Fuchs and S. Tabachnikov},
Invariants of Legendrian and transverse knots in the standard contact space,
{\em Topology} {\bf 36} (1997), no. 5, 1025--1053.

\bibitem{Fu}
{F. B. Fuller},
The writhing number of a space curve,
{\em Proc. Nat. Acad. Sci. U.S.A.} {\bf 68} (1971), 815--819.

\bibitem{GluPan1998}
{H. Gluck and and L.-H. Pan},
Embedding and knotting of positive curvature surfaces in 3-space,
{\em Topology} {\bf 37} (1998), no. 4, 851--873.

\bibitem{GM}
{J. D. Grant and E. Musso},
Coisotropic variational problems,
{\em J. Geom. Phys.} {\bf 50} (2004), 303--338.

\bibitem{Gr}
{P. A. Griffiths},
\textit{Exterior differential systems and the calculus of variations},
Progress in Mathematics, 25, Birkh\"auser, Boston, 1982.





\bibitem{Hsu}
L. Hsu,
{Calculus of variations via the Griffiths formalism},
{\em J. Differential Geom.} \textbf{36} (1992), 551--589.

\bibitem{Jacobo1985}
{H. Jacobowitz},
Chains in CR geometry,
{\em J. Differential Geom.} \textbf{21} (1985), no. 2, 163--194.


\bibitem{JMNbook}
{G. R. Jensen, E. Musso, and L. Nicolodi},
{\em Surfaces in Classical Geometries. A Treatment by Moving Frames}, Universitext,
Springer, Cham, 2016.


\bibitem{Koch}
{L. K. Koch},
Chains on CR manifolds and Lorentz geometry,
{\em Trans. Amer. Math. Soc.} \textbf{307} (1988), no. 2, 827--841.




\bibitem{Lee1986}
{J. M. Lee},
The Fefferman metric and pseudohermitian invariants,
{\em Trans. Amer. Math. Soc.} \textbf{296} (1986), no. 1, 411--429.

\bibitem{Levi1910}
{E. E. Levi},
Studii sui punti singolari essenziali delle funzioni analitiche
di due o pi\`u variabili complesse,
{\em Ann. Mat. Pura Appl.} \textbf{17} (1910), no. 1, p. 61--87.

\bibitem{Levi1911}
{E. E. Levi},
Sulle ipersuperficie dello spazio a 4 dimensioni che possono essere frontiera del campo
di esistenza di una funzione analitica di due variabili complesse,
{\em Ann. Mat. Pura Appl.} \textbf{18} (1911), no. 1, p. 69--79.

\bibitem{Lewy1957}
{H. Lewy},
An example of a smooth linear partial differential equation without solution,
{\em Ann. of Math. (2)}  \textbf{66} (1957), 155--158.

\bibitem{Martinet1971}
{J. Martinet},
Formes de contact sur les vari\'et\'es de dimension 3,
{\em Proceedings of Liverpool Singularities Symposium, II (1969/1970)}, 142--163.
Lecture Notes in Math., Vol. 209, Springer, Berlin, 1971.












\bibitem{Musso1992}
{E. Musso},
The local embedding problem for optical structures,
{\em J. Geom. Phys.} \textbf{10} (1992), no. 1, 1--18.




\bibitem{MN2}
{E. Musso and L. Nicolodi},
Reduction for constrained variational problems on 3-dimensional null curves,
\textit{SIAM J. Control Optim.} \textbf{47} (2008), no. 3, 1399-1414.

\bibitem{MN}
{E. Musso and L. Nicolodi},
Quantization of the conformal arclength functional on space curves,
\textit{Comm. Anal. Geom.} \textbf{25} (2017), no. 1, 209--242.

\bibitem{MS}
{E. Musso and F. Salis},
The Cauchy--Riemann strain functional for Legendrian curves in the 3-sphere,
{\em Annali di Matematica} (2020). DOI: https://doi.org/10.1007/s10231-020-00974-7.
ArXiv version: arXiv:2003.01713v1 [math.DG].





\bibitem{Oh}
{J. O'Hara},
\textit{Energy of knots and conformal geometry},
Series on Knots and Everything, 33, World Scientific Publishing Co., Inc., River Edge, NJ, 2003.






\bibitem{Po}
{W. F. Pohl},
The self-linking number of a closed space curve,
\textit{J. Math. Mech.} \textbf{17} (1968), 975--985.

\bibitem{Poincare}
H. Poincar\'e,
Les fonctions analytiques de deux variables et la repr\'esentation conforme,
{\em Rend. Circ. Mat. Palermo} (1907), 185--220.




\bibitem{RobinTraut1985}
{I. Robinson and A. Trautman},
Integrable optical geometry,
{\em Lett. Math. Phys.} \textbf{10} (1985), no. 2-3, 179--182.

\bibitem{RobinTraut1985Pr}
{I. Robinson and A. Trautman},
Cauchy--Riemann structures in optical geometry, in {\em Proceedings of the fourth Marcel
Grossmann meeting on general relativity, Part A, B} (Rome, 1985), 317--324,
North-Holland, Amsterdam, 1986.




\bibitem{Ta1962}
{N. Tanaka},
On the pseudo-conformal geometry of hypersurfaces of the space of $n$ complex variables,
{\em J. Math. Soc. Japan}  \textbf{14} (1962), 397--429.







\bibitem{White}
{J. White},
Self-linking and the Gauss integral in higher dimensions,
{\em Amer. J. Math.}  \textbf{91} (1969), no. 3, 693--728.










\end{thebibliography}

\end{document}